\documentclass[11pt,reqno]{amsart}

\usepackage[utf8]{inputenc}
\usepackage[T1]{fontenc}
\usepackage{amsmath,amssymb,amsthm,mathtools}
\usepackage{enumitem}
\usepackage[margin=1.15in]{geometry}
\usepackage{xcolor}
\usepackage{hyperref}
\usepackage{tikz}
\usetikzlibrary{positioning, arrows.meta, shapes.geometric, calc, fit, backgrounds}
\hypersetup{
  colorlinks=true,
  linkcolor=blue!50!black,
  citecolor=blue!50!black,
  urlcolor=blue!50!black
}
\usepackage{graphicx}
\usepackage{cite}

\newtheorem{theorem}{Theorem}[section]
\newtheorem{proposition}[theorem]{Proposition}
\newtheorem{lemma}[theorem]{Lemma}
\newtheorem{corollary}[theorem]{Corollary}

\theoremstyle{definition}

\newtheorem{problem}[theorem]{Problem}
\newtheorem{conjecture}[theorem]{Conjecture}

\theoremstyle{remark}
\newtheorem{remark}[theorem]{Remark}

\newcommand{\R}{\mathbb{R}}
\newcommand{\T}{\mathbb{T}}
\newcommand{\Z}{\mathbb{Z}}
\newcommand{\Hil}{\mathcal{H}}
\newcommand{\Free}{\mathcal{F}}
\newcommand{\Vop}{\mathcal{V}}
\newcommand{\Adm}{\mathcal{A}}
\newcommand{\loc}{\mathrm{loc}}
\newcommand{\dist}{\operatorname{dist}}
\newcommand{\Div}{\nabla\!\cdot}
\DeclareMathOperator{\Op}{Op}

\title[Critical Mass in Chemotaxis: Elliptic vs Volterra]
{Structural dichotomy and mass criticality in indirect chemotaxis cascades: fourth-order ellipticity versus Volterra memory}

\author{Jiguang Yu}
\address{College of Engineering, Boston University, Boston, MA 02215, USA}
\email{jyu678@bu.edu}
\thanks{These authors (J. Yu and L.S. Wang) contributed equally to this work. Corresponding  author: L.S. Wang (wang.s41@northeastern.edu)}

\author{Louis Shuo Wang}
\address{Department of Mathematics, Northeastern University, Boston, MA 02115, USA}
\email{wang.s41@northeastern.edu}

\date{}

\begin{document}

\keywords{chemotaxis cascade, Keller–Segel system, critical mass, fourth-order elliptic operator, logarithmic Hardy–Littlewood–Sobolev inequality.}

\subjclass[2020]{Primary 35Q92, 35B33; Secondary 92C17, 35R09, 35A15, 35K51}

\begin{abstract}
We investigate the structural and operator-theoretic foundations of indirect signal-generation mechanisms in Keller–Segel-type chemotaxis models. By analyzing a physically motivated multi-stage signaling cascade, we establish a precise mathematical dichotomy between instantaneous equilibration and transient kinetic memory. Specifically, we prove that the fully equilibrating parabolic–elliptic–elliptic (PES) cascade reduces to a static fourth-order elliptic interaction. In dimension four, an exact algebraic cancellation of the leading Newtonian singularity yields a purely logarithmic kernel, shifting the mass-critical dimension from $N=2$ to $N=4$. Through an $L^2$-gradient flow formulation, we identify the corresponding concentration-scaling candidate critical mass $M_* = 64\pi^2\tau/\chi$. In sharp contrast, we demonstrate that the mixed elliptic–parabolic (MEP) cascade retains a genuine Volterra memory effect that defies static reduction. Its interaction drift acts as a singular perturbation in time—exhibiting classical two-dimensional Keller–Segel principal order near the time diagonal, yet providing fourth-order smoothing in its frozen-time average, necessitating a mixed space–time threshold theory. These results isolate the physical origin of mass-critical dimensional shifts in multiscale biological systems and formulate the specific adapted Adams/logarithmic Hardy–Littlewood–Sobolev (log-HLS) inequalities and mixed-norm criteria required to close the threshold problems.
\end{abstract}

\maketitle

\section{Introduction}
\label{sec:intro}

\subsection{Background}
\label{subsec:background}
In the mathematical modelling of biological and physical self-organisation, chemotaxis---the directed movement of cellular or synthetic agents in response to chemical gradients---represents a fundamental mechanism for macroscopic pattern formation. Models of Keller--Segel type \cite{keller1970initiation,keller1971model} exhibit critical behaviour through a delicate balance between random diffusion and chemotactic aggregation. In the classical parabolic--elliptic Keller--Segel system
\begin{equation}\label{eq:KS-classical}
        \partial_t u=\Delta u-\chi\Div(u\nabla v),
        \qquad
        -\Delta v=u,
\end{equation}
the eliminated signal law is $v=(-\Delta)^{-1}u$. The corresponding drift operator $\nabla(-\Delta)^{-1}$ has principal pseudodifferential order $-1$, and under the parabolic scaling
$u_\lambda(x,t)=\lambda^{\alpha}u(\lambda x,\lambda^{2}t)$, diffusion and chemotactic aggregation balance precisely for $\alpha=2$. Hence, the mass-critical dimension is $N=2$ and the scaling-critical Lebesgue exponent is $q_c=N/2$. In two dimensions, the logarithmic singularity of the Newtonian potential and the logarithmic Hardy--Littlewood--Sobolev (log-HLS) inequality \cite{beckner1993sharp,carlen1992competing} underpin the sharp critical-mass theorem $M_c=8\pi/\chi$ of Blanchet--Dolbeault--Perthame \cite{blanchet2006two}; see also \cite{jager1992explosions, nagai1995blow, gajewski1998global,liu2025bidirectional,horstmann20031970}.

In realistic biochemical networks, however, signalling is rarely instantaneous or direct. Many physically motivated chemotaxis models replace the direct production law $-\Delta v=u$ by an indirect cascade, where the migration signal is not produced directly by the cells but is generated through an intermediate substance, reflecting multi-stage metabolic or kinetic processing \cite{fujie2017application,tao2017critical,wang2026algebraic}. The simplest such cascades introduce a single intermediate variable $w$. We focus on two particular cascades that look formally similar but differ in one decisive physical aspect: whether the equation governing the final migration cue assumes instantaneous equilibration (elliptic) or retains its transient diffusion dynamics (parabolic).

\subsection{Two indirect cascades}
\label{subsec:two-cascades}

Throughout the paper, let
\begin{equation}\label{eq:notation-A}
        A:=I-\Delta,
        \qquad
        A_\tau:=I-\tau\Delta,
        \qquad
        \tau>0,
\end{equation}
be realised on $\R^{N}$, on the flat torus $\T^{N}$, or on a bounded $C^{2}$ domain $\Omega\subset\R^{N}$ with homogeneous Neumann boundary conditions; in the bounded-domain setting all operators below are taken with this boundary condition. The chemotactic sensitivity $\chi>0$ is fixed.

The first model considered is the \emph{parabolic--elliptic--elliptic (PES) cascade},
\begin{equation}\label{eq:PES-intro}\tag{PES}
\begin{cases}
        \partial_t u=\Delta u-\chi\Div(u\nabla c),\\[1mm]
        0=\Delta w-w+u,\\[1mm]
        0=\tau\Delta c-c+w.
\end{cases}
\end{equation}
The second is the \emph{mixed elliptic--parabolic (MEP) cascade},
\begin{equation}\label{eq:MEP-intro}\tag{MEP}
\begin{cases}
        \partial_t u=\Delta u-\chi\Div(u\nabla c),\\[1mm]
        0=\Delta w-w+u,\\[1mm]
        \partial_t c=\Delta c-c+w.
\end{cases}
\end{equation}
In both systems $u$ is the cell density, $w$ is the intermediate substance, and $c$ is the final chemotactic signal sensed by the cells.

In \eqref{eq:PES-intro}, both signal equations are elliptic, meaning the intermediate kinetics equilibrate infinitely fast. The signal variables can thus be eliminated pointwise in time:
\begin{equation}\label{eq:PES-elimination}
        A w=u,\qquad A_\tau c=w
        \quad\Longrightarrow\quad
        c=K_\tau u,\qquad
        K_\tau:=A_\tau^{-1}A^{-1}.
\end{equation}
The operator $K_\tau$ is positive and self-adjoint on $L^{2}$, with Fourier multiplier
\begin{equation}\label{eq:Ktau-multiplier-intro}
        \widehat{K_\tau f}(\xi)
        =
        \frac{\widehat f(\xi)}{(1+|\xi|^{2})(1+\tau|\xi|^{2})}.
\end{equation}
The behaviour at high frequency is
\begin{equation}\label{eq:Ktau-asymptotic-intro}
        \frac{1}{(1+|\xi|^{2})(1+\tau|\xi|^{2})}
        =
        \frac{1}{\tau|\xi|^{4}}
        +
        O(|\xi|^{-6})
        \quad\text{as }|\xi|\to\infty,
\end{equation}
so $K_\tau$ has principal symbol of order $-4$ (we follow the symbol-decay convention; see Convention~\ref{conv:order} below) and its principal homogeneous part is $\tau^{-1}(-\Delta)^{-2}$. The chemotactic drift $\nabla K_\tau$ has symbol of order $-3$.

In \eqref{eq:MEP-intro}, the first elimination still gives $w=A^{-1}u$, but the final signal equation reduces to
\begin{equation}\label{eq:MEP-signal-eq-intro}
        \partial_t c+A c=A^{-1}u.
\end{equation}
Duhamel's formula then yields
\begin{equation}\label{eq:MEP-Duhamel-intro}
        c(t)=e^{-tA}c_0
        +\int_{0}^{t}e^{-(t-s)A}A^{-1}u(s)\,ds.
\end{equation}
Thus, instead of a static fourth-order elliptic map $u(t)\mapsto c(t)$, the MEP elimination produces a Volterra memory operator. The memory part of the drift,
\begin{equation}\label{eq:MEP-Volterra-drift-intro}
        \Vop u(t):=\int_{0}^{t}\nabla e^{-(t-s)A}A^{-1}u(s)\,ds,
\end{equation}
has Fourier multiplier
\begin{equation}\label{eq:MEP-multiplier-intro}
        m(\xi,\theta)
        =\frac{i\xi\,e^{-\theta(1+|\xi|^{2})}}{1+|\xi|^{2}},
        \qquad \theta=t-s>0.
\end{equation}
For every fixed $\theta>0$ the heat factor renders $m(\xi,\theta)$ rapidly decaying in $\xi$. Near the time diagonal,
\begin{equation}\label{eq:MEP-near-diagonal-intro}
        m(\xi,\theta)\longrightarrow\frac{i\xi}{1+|\xi|^{2}}
        \qquad\text{as }\theta\downarrow 0,
\end{equation}
and the limiting multiplier has symbol of order $-1$, the same order as the classical Keller--Segel drift $\nabla(I-\Delta)^{-1}$.

On the other hand, if $u$ is frozen in time then
\begin{equation}\label{eq:MEP-frozen-average-intro}
        \int_{0}^{t}\nabla e^{-\theta A}A^{-1}u\,d\theta
        =\nabla A^{-2}\bigl(I-e^{-tA}\bigr)u,
\end{equation}
which has symbol of order $-3$. Therefore, the MEP drift is, in a precise sense, both Keller--Segel-like near the time diagonal and PES-like in its time average. Reconciling these two competing features is the criticality problem for MEP and lies outside the static elliptic framework that suffices for PES.

\begin{remark}[Convention on Fourier transform and pseudodifferential order]\label{conv:order}
Throughout we use the convention
\[
        \widehat f(\xi)=\int_{\R^{N}}e^{-ix\cdot\xi}f(x)\,dx,
        \qquad
        f(x)=(2\pi)^{-N}\int_{\R^{N}}e^{ix\cdot\xi}\widehat f(\xi)\,d\xi,
\]
so that $\widehat{\Delta f}(\xi)=-|\xi|^{2}\widehat f(\xi)$. For pseudodifferential order we use the symbol-decay convention: a Fourier multiplier whose symbol decays like $|\xi|^{-m}$ at high frequency is said to have \emph{principal order} $-m$. In this convention $K_\tau$ has principal order $-4$ (it is a smoothing operator that gains four derivatives), $\nabla K_\tau$ has principal order $-3$, and the classical Keller--Segel drift $\nabla A^{-1}$ has principal order $-1$. All explicit constants appearing below (in particular the kernel coefficient $1/(8\pi^{2}\tau)$ in dimension four) are computed in this Fourier convention.
\end{remark}

\begin{figure}[htbp]
    \centering
    
    \resizebox{\textwidth}{!}{%
    \begin{tikzpicture}[
        entity/.style = {draw, rectangle, rounded corners, minimum width=2.4cm, minimum height=1.2cm, align=center, fill=blue!5, thick, font=\sffamily},
        process/.style = {->, >=stealth, thick, font=\footnotesize\sffamily, align=center},
        highlight_PES/.style = {draw=red!60, dashed, rounded corners, thick, inner sep=12pt},
        highlight_MEP/.style = {draw=orange!80, dashed, rounded corners, thick, inner sep=12pt},
        labelbox/.style = {font=\sffamily\bfseries, align=left}
    ]

    \def\xU{0}
    \def\xW{4.5}
    \def\xC{9}
    \def\xM{13.5}

    \node[labelbox] (title1) at (\xU-1.2, 1) {(a) Classical Keller--Segel};
    \node[entity] (u1) at (\xU, 0) {Cell density\\$u$};
    \node[entity] (v1) at (\xC, 0) {Signal\\$v$};
    \node[entity] (m1) at (\xM, 0) {Cell movement};

    \draw[process] (u1) -- node[above] {Direct secretion} node[below] {$-\Delta v = u$} (v1);
    \draw[process] (v1) -- node[above] {Chemotaxis} node[below] {$-\chi\nabla\cdot(u\nabla v)$} (m1);

    \node[labelbox] (title2) at (\xU-1.2, -2.5) {(b) Parabolic--Elliptic--Elliptic (PES)};
    \node[entity] (u2) at (\xU, -3.5) {Cell density\\$u$};
    \node[entity] (w2) at (\xW, -3.5) {Intermediate\\$w$};
    \node[entity] (c2) at (\xC, -3.5) {Signal\\$c$};
    \node[entity] (m2) at (\xM, -3.5) {Cell movement};

    \draw[process] (u2) -- node[above] {Fast prod.} node[below] {$-\Delta w+w=u$} (w2);
    \draw[process] (w2) -- node[above] {Fast conv.} node[below] {$-\tau\Delta c+c=w$} (c2);
    \draw[process] (c2) -- node[above] {Chemotaxis} node[below] {$-\chi\nabla\cdot(u\nabla c)$} (m2);

    \begin{scope}[on background layer]
        \node[fit=(w2) (c2), highlight_PES] (pes_box) {};
    \end{scope}
    \node[font=\sffamily\small\bfseries, text=red!80!black, above=3pt of pes_box] {Fast / Elliptic Equilibration};

    \node[labelbox] (title3) at (\xU-1.2, -6.5) {(c) Mixed Elliptic--Parabolic (MEP)};
    \node[entity] (u3) at (\xU, -7.5) {Cell density\\$u$};
    \node[entity] (w3) at (\xW, -7.5) {Intermediate\\$w$};
    \node[entity, fill=orange!15, draw=orange!80!black] (c3) at (\xC, -7.5) {Signal\\$c$};
    \node[entity] (m3) at (\xM, -7.5) {Cell movement};

    \draw[process] (u3) -- node[above] {Fast prod.} node[below] {$-\Delta w+w=u$} (w3);
    
    \draw[process, draw=orange!80!black, line width=1.6pt] (w3) -- node[above] {Time evolution} node[below] {$\partial_t c = \Delta c - c + w$} (c3);
    
    \draw[process] (c3) -- node[above] {Chemotaxis} node[below] {$-\chi\nabla\cdot(u\nabla c)$} (m3);

    \begin{scope}[on background layer]
        \node[fit=(w3) (c3), highlight_MEP] (mep_box) {};
    \end{scope}
    \node[font=\sffamily\small\bfseries, text=orange!80!black, above=3pt of mep_box] {Transient Memory (Volterra / Parabolic)};

    \end{tikzpicture}%
    }
    
    \caption{Conceptual schematic of the structural dichotomy in signal-generation mechanisms. \textbf{(a)} The classical Keller--Segel system assumes instantaneous, direct secretion of the chemoattractant $v$. \textbf{(b)} The PES cascade incorporates an intermediate metabolic step $w$, but assumes both intermediate and final signals equilibrate infinitely fast (elliptic), mathematically reducing to a static fourth-order smoothing interaction. \textbf{(c)} The MEP cascade retains the transient time-evolution (parabolic) of the final signal $c$. This breaks the static elliptic reduction, introducing a genuine Volterra memory effect that forces a mixed space--time threshold problem.}
    \label{fig:mechanisms_schematic}
\end{figure}

\subsection{Main structural theorem}
\label{subsec:main-thm}

The central result of the paper makes the PES/MEP dichotomy precise.

\begin{theorem}[Structural dichotomy]\label{thm:main}
Let $N\ge 1$, $\tau>0$, $\chi>0$, and let $A=I-\Delta$, $A_\tau=I-\tau\Delta$ be realised on $\R^{N}$, on $\T^{N}$, or on a bounded $C^{2}$ domain with homogeneous Neumann boundary conditions. Then:
\begin{enumerate}[label=(\roman*),leftmargin=2em]
\item \textup{(PES structure.)} The PES cascade \eqref{eq:PES-intro} reduces to $c=K_\tau u$ with $K_\tau=A_\tau^{-1}A^{-1}$. The operator $K_\tau$ is positive and self-adjoint on $L^{2}$. It is a smoothing operator of principal order $-4$ whose principal homogeneous part is $\tau^{-1}(-\Delta)^{-2}$.
\item \textup{(PES scaling.)} The high-frequency parabolic scaling of PES is $u_\lambda(x,t)=\lambda^{4}u(\lambda x,\lambda^{2}t)$, yielding the scaling-critical Lebesgue exponent $q_c=N/4$ and the mass-critical dimension $N=4$.
\item \textup{(PES log-kernel in $N=4$.)} On $\R^{4}$, the integral kernel $G_\tau$ of $K_\tau$ admits the local expansion
\[
        G_\tau(x)=\frac{1}{8\pi^{2}\tau}\log\frac{1}{|x|}+R_\tau(x),
\]
where $R_\tau\in C^{1}(\R^{4}\setminus\{0\})\cap L^{\infty}_{\loc}(\R^{4})$. The same expansion holds in the interior of any bounded $C^{\infty}$ Neumann domain and on $\T^{4}$ \emph{(}with $C^{2}$ regularity sufficient at all points away from the boundary, contingent on the corresponding interior Green-kernel regularity; see Proposition~\ref{prop:log-kernel-R4}\emph{)}.
\item \textup{(MEP structure.)} The MEP cascade \eqref{eq:MEP-intro} eliminates to the Volterra representation \eqref{eq:MEP-Duhamel-intro}. The memory drift multiplier \eqref{eq:MEP-multiplier-intro} satisfies the near-diagonal limit \eqref{eq:MEP-near-diagonal-intro}; the limit has principal order $-1$.
\item \textup{(Class separation.)} PES is a static fourth-order elliptic chemotaxis system. MEP is a Volterra-memory chemotaxis system; it cannot be assigned PES criticality by a static fourth-order elliptic scaling argument.
\end{enumerate}
\end{theorem}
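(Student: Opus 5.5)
The proof proceeds item by item; items (i), (ii), (iv) are essentially symbol calculus, and the real work is the kernel expansion (iii).

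For (i), I would solve the two elliptic equations in turn. Since $A=I-\Delta$ and $A_\tau=I-\tau\Delta$ are positive self-adjoint with spectrum in $[1,\infty)$ in each of the three realisations (on $\T^N$ and on Neumann domains via the spectral theorem), both inverses are bounded, positive and self-adjoint. They commute, being functions of the same Laplacian. Hence $K_\tau=A_\tau^{-1}A^{-1}$ is positive and self-adjoint, with spectral multiplier $\big((1+\lambda)(1+\tau\lambda)\big)^{-1}$ on the eigenvalues $\lambda\ge0$ of $-\Delta$. The expansion \eqref{eq:Ktau-asymptotic-intro} then gives the order $-4$ and the principal part $\tau^{-1}(-\Delta)^{-2}$. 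For (ii), I would substitute $u_\lambda=\lambda^\alpha u(\lambda x,\lambda^2 t)$ into $\partial_t u=\Delta u-\chi\Div(u\nabla\tau^{-1}(-\Delta)^{-2}u)$. Then $c$ scales like $\lambda^{\alpha-4}$, and the drift term scales like $\lambda^{2\alpha-2}$ against $\lambda^{\alpha+2}$ for diffusion, which forces $\alpha=4$. Invariance of $\|u_\lambda\|_{L^q}=\lambda^{4-N/q}\|u\|_{L^q}$ gives $q_c=N/4$, and invariance of mass ($q=1$) gives $N=4$.

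For (iii) on $\R^4$, I would use partial fractions:
\[
\frac{1}{(1+|\xi|^2)(1+\tau|\xi|^2)}=\frac{1}{1-\tau}\Big(\frac{1}{1+|\xi|^2}-\frac{\tau}{1+\tau|\xi|^2}\Big)\quad(\tau\ne1).
\]
This gives $G_\tau=\frac{1}{1-\tau}\big(\Phi_1-\tau^{-1}\Phi_{1/\sqrt\tau}\big)$, where $\Phi_\mu$ is the Bessel kernel of $(\mu^2-\Delta)^{-1}$ on $\R^4$, namely $\Phi_\mu(x)=\frac{\mu}{4\pi^2|x|}K_1(\mu|x|)$. Next I would insert the small-argument expansion $K_1(z)=z^{-1}+\tfrac z2\log\tfrac z2+O(z)$. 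The Newtonian terms $\frac{1}{4\pi^2|x|^2}$ cancel exactly in the difference, which is the announced cancellation. The logarithmic terms are $\frac{\mu^2}{8\pi^2}\log|x|$. Their combination is $\frac{1}{1-\tau}\cdot\frac{1}{8\pi^2}(1-\tau^{-1})\log|x|=-\frac{1}{8\pi^2\tau}\log|x|$, as required.

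The remainder consists of bounded terms together with $z^3\log z$-type corrections, so it is $C^1$ away from $0$ and locally bounded. The case $\tau=1$ follows by differentiating in $\mu$, or by continuity in $\tau$ of the coefficient. For the torus and interior Neumann points, I would write the Green kernel as the $\R^4$ kernel times a cutoff plus a correction $h$ solving $A_\tau A h=F$ with smooth $F$ near the point. Interior elliptic regularity, as in Proposition~\ref{prop:log-kernel-R4}, makes $h$ smooth away from the boundary. The main obstacle is tracking the Bessel expansion precisely enough to confirm the cancellation and the $C^1$ regularity of $R_\tau$; I would double-check the constant independently via $\tau^{-1}(-\Delta)^{-2}$, whose fundamental solution in $\R^4$ is $\frac{1}{8\pi^2}\log\frac1{|x|}$.

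For (iv), Duhamel applied to \eqref{eq:MEP-signal-eq-intro}, with $w=A^{-1}u$, gives \eqref{eq:MEP-Duhamel-intro}. Taking the gradient and the Fourier transform gives \eqref{eq:MEP-multiplier-intro}. Pointwise, $e^{-\theta(1+|\xi|^2)}\to1$ as $\theta\downarrow0$, which gives \eqref{eq:MEP-near-diagonal-intro}, and $i\xi/(1+|\xi|^2)$ decays like $|\xi|^{-1}$. For (v), PES is static by (i). For MEP I would argue by contradiction: a static order-$(-4)$ scaling would require a uniform symbol bound $|\xi|^{-3}$ on the drift. However, with $\theta=|\xi|^{-2}$ one has $|m(\xi,\theta)|\sim e^{-1}|\xi|^{-1}$. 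So the near-diagonal part of the kernel is of Keller–Segel order, while the frozen-time integral \eqref{eq:MEP-frozen-average-intro} has order $-3$. No single homogeneous static scaling captures both regimes.
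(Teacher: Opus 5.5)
Your argument is correct apart from one slip. It shares the paper's skeleton: spectral calculus for (i), the balance $\alpha+2=2\alpha-2$ for (ii), partial fractions with exact cancellation of the Newtonian term for (iii), and Duhamel plus the pointwise limit of the multiplier for (iv).

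The slip is in (iii). Since $\tau/(1+\tau|\xi|^{2})=1/(\tau^{-1}+|\xi|^{2})$, the second partial-fraction term is exactly $\Phi_{1/\sqrt\tau}$, so
\[
        G_\tau=\frac{1}{1-\tau}\bigl(\Phi_1-\Phi_{1/\sqrt\tau}\bigr).
\]
With your extra factor $\tau^{-1}$, the $|x|^{-2}$ terms would leave $-1/(4\pi^{2}\tau|x|^{2})$ in $G_\tau$, and nothing would cancel. You seem to have merged the paper's expression $\tau^{-1}G_2(x/\sqrt\tau)$ with $\Phi_{1/\sqrt\tau}$; these are the same function, because $\Phi_\mu(x)=\mu^{2}G_2(\mu x)$ on $\R^{4}$. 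Your logarithmic bookkeeping, with coefficient $(1-\tau^{-1})/(8\pi^{2})$, already matches the corrected formula, so nothing downstream changes.

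The real difference from the paper is how you obtain the single-kernel expansion. The paper proves Lemma~\ref{lem:bessel-G2} by a PDE argument: subtract $G_0$, note $-\Delta(R-w)=-R$ with $w=(8\pi^{2})^{-1}\log|x|$, and apply elliptic regularity. It then treats $\tau=1$ separately through the heat-kernel integral $G_1=\frac{1}{8\pi^{2}}K_0(|x|)$, the torus by periodisation, and Neumann domains by an interior parametrix. You instead use the closed form $\Phi_\mu(x)=\frac{\mu}{4\pi^{2}|x|}K_1(\mu|x|)$ and the expansion of $K_1$ at $0$.

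Your route is more explicit and uniform in $\tau$. The remainder is visibly $C^{\infty}$ off the origin and continuous at it, which gives the $C^{1}(\R^{4}\setminus\{0\})\cap L^{\infty}_{\loc}$ claim directly. Your $\mu$-derivative also reproduces the paper's $\tau=1$ formula exactly: since $\partial_\mu[\mu K_1(\mu r)]=-\mu rK_0(\mu r)$, one gets $-\frac{1}{2\mu}\partial_\mu\Phi_\mu|_{\mu=1}=\frac{1}{8\pi^{2}}K_0(r)$. The paper's route for $\tau\ne1$ needs only $-\Delta G_2+G_2=\delta_0$ and elliptic regularity, so it transfers more easily beyond constant coefficients.

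Your cutoff-plus-correction argument on $\T^{4}$ and at interior Neumann points is purely local. It is therefore, if anything, better suited to the $C^{2}$ case of Remark~\ref{rem:boundary-low-regularity}. Interior regularity makes the correction smooth for each fixed $y$. However, $R_\tau\in L^{\infty}(\mathcal K\times\mathcal K)$ also needs a global a priori bound on $G(\cdot,y)$ that is uniform in $y\in\mathcal K$, for example boundedness of $K_\tau\delta_y$ in $L^{2}$.

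Finally, your choice $\theta=|\xi|^{-2}$ in (v) gives $|\xi|^{3}|m(\xi,|\xi|^{-2})|\gtrsim|\xi|^{2}\to\infty$. This is a sharper, quantitative form of the non-uniformity that Proposition~\ref{prop:MEP-near-diagonal} asserts only qualitatively.
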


The constituent parts of Theorem~\ref{thm:main} are proved in Sections~\ref{sec:PES}--\ref{sec:MEP}. Item (i) is Lemma~\ref{lem:Ktau-smoothing} and Proposition~\ref{prop:Ktau-principal-symbol}; item (ii) is Proposition~\ref{prop:PES-scaling}; item (iii) is Proposition~\ref{prop:log-kernel-R4}; item (iv) is Propositions~\ref{prop:MEP-Volterra} and~\ref{prop:MEP-near-diagonal}; item (v) follows from items (i)--(iv).

\subsection{Free energy and concentration-scaling candidate mass}
\label{subsec:free-energy-intro}

Because $K_\tau$ is self-adjoint, PES admits the gradient-flow free energy
\begin{equation}\label{eq:PES-free-energy-intro}
        \Free_{\mathrm{PES}}[u]
        :=\int_{\Omega}u\log u\,dx
        -\frac{\chi}{2}\int_{\Omega}u\,K_\tau u\,dx.
\end{equation}
Smooth positive solutions of the eliminated PES equation satisfy the entropy-dissipation identity
\begin{equation}\label{eq:PES-dissipation-intro}
        \frac{d}{dt}\Free_{\mathrm{PES}}[u(t)]
        =-\int_{\Omega}u\,\bigl|\nabla(\log u-\chi K_\tau u)\bigr|^{2}\,dx
        \le 0;
\end{equation}
see Proposition~\ref{prop:PES-dissipation}. In dimension $N=4$, the logarithmic local singularity of $K_\tau$ produces the same kind of variational concentration scenario that, in the two-dimensional classical Keller--Segel problem, leads to the critical mass $8\pi/\chi$.

For a one-point concentrating sequence
\begin{equation}\label{eq:concentration-intro}
        u_\varepsilon(x)=\varepsilon^{-4}U\!\left(\tfrac{x-x_0}{\varepsilon}\right),
        \qquad
        U\in C_c^{\infty}(B_{1}),
        \ U\ge 0,
        \ \int_{\R^{4}}U\,dx=M,
\end{equation}
located at an interior point $x_0$, an explicit computation (Proposition~\ref{prop:candidate-mass}) gives
\begin{equation}\label{eq:concentration-energy-intro}
        \Free_{\mathrm{PES}}[u_\varepsilon]
        =\left(4M-\frac{\chi M^{2}}{16\pi^{2}\tau}\right)\log\frac{1}{\varepsilon}+O(1).
\end{equation}
The leading coefficient vanishes at
\begin{equation}\label{eq:M-star-intro}
        M_{\!*}=\frac{64\pi^{2}\tau}{\chi}
\end{equation}
which we identify as the \emph{concentration-scaling candidate} for the four-dimensional PES critical mass. We emphasise that the computation \eqref{eq:concentration-energy-intro}--\eqref{eq:M-star-intro} is a scaling identification, not a sharp threshold theorem. To promote $M_{\!*}$ to a sharp threshold requires the corresponding sharp $K_\tau$-adapted logarithmic Hardy--Littlewood--Sobolev / Adams inequality; we formulate this as Problem~\ref{prob:sharp-logHLS} in Section~\ref{sec:open}.

\subsection{Position relative to the literature}
\label{subsec:position}

The classical two-dimensional Keller--Segel system, initiated by the foundational works \cite{keller1970initiation,keller1971model} and comprehensively surveyed in \cite{horstmann20031970,hillen2009user,bellomo2015toward,wang2025analysis,winkler2016two}, has been intensively studied since the 1990s. Its sharp critical mass $8\pi/\chi$ was established for the parabolic--elliptic case in \cite{blanchet2006two} via the logarithmic-HLS inequality of \cite{beckner1993sharp}; the classification of blow-up profiles and infinite-time aggregation has been further developed through the two-dimensional critical-mass theory in \cite{blanchet2008infinite,liang2025global,raphael2014stability,nagai1997application}.

Indirect signal-production cascades have received increasing attention. Tao and Winkler \cite{tao2017critical} studied a related indirect-production mechanism in two dimensions, in which the migration signal is generated through an intermediate component with temporal relaxation. Their model has the form, up to normalisation and notation,
\[
        u_t=\Delta u-\Div(u\nabla v),
        \qquad
        0=\Delta v-\mu(t)+w,
        \qquad
        \tau w_t+\delta w=u,
\]
on the unit disk with homogeneous Neumann boundary conditions. They proved a critical-mass phenomenon for \emph{infinite-time} aggregation, in sharp contrast with the finite-time blow-up mechanism of the classical parabolic--elliptic Keller--Segel system. This indirect mechanism has subsequently inspired investigations along several complementary directions: models featuring memory effects akin to MEP have been studied on the whole plane \cite{xiang2025critical}, bounded domains \cite{yang2026critical,laurenccot2024singular,wang2025analysis1}, and in higher-dimensional blow-up settings \cite{mao2025critical,mao2025finite,mao2025finite1,wang2026damage}. Furthermore, a sharp criterion for global existence versus finite-time blow-up in supercritical dimensions was recently established \cite{soga2026sharp}, alongside the identification of a general family of mass-critical systems \cite{winkler2022family,yu2026pattern} and observations that haptotaxis minimally impacts global dynamics in such models \cite{chen2024negligibility}.

The present MEP system is not identical to that early model: here the final migration signal satisfies a spatially parabolic equation $\partial_t c=\Delta c-c+w$, and the mathematical analysis of such MEP systems frequently relies on the semigroup and maximal-regularity viewpoint for parabolic equations \cite{pazy2012semigroups,henry2006geometric,lunardi2012analytic,wang2026breakdown,amann1995linear,pruss2016moving}. Moreover, the Volterra memory structure inherent in MEP is linked to spatial movement models with distributed memory \cite{shi2021spatial} and facilitates the rigorous convergence from indirect to direct signal production via singular limits \cite{bui2026parabolic,wang2025multi,liu2025global}. Thus, Tao--Winkler's work should be viewed as a guiding example showing that indirect production can fundamentally alter critical dynamics, rather than as a direct instance of the MEP cascade analysed below.

Related indirect-production and multi-chemical chemotaxis systems have been studied in several directions; see, for example, \cite{fujie2017application} for the use of Adams-type inequalities in two-chemical chemotaxis systems, and \cite{nagai2000chemotactic,gao2022rolling,mao2024dirac} for the analysis of chemotactic collapse and Dirac-type aggregation with full mass. More generally, logarithmic mechanisms in related settings are closely tied to the Adams inequality and higher-order potential theory \cite{adams1988sharp,adams2003sobolev,hebey2000nonlinear,yu2026beyond,lin1998classification,martinazzi2009classification}, together with logarithmic Hardy--Littlewood--Sobolev ideas \cite{carlen1992competing,lieb2001analysis}. Notably, sharp Adams-type inequalities on $\mathbb{R}^n$ \cite{ruf2013sharp} are essential for underpinning the four-dimensional PES theory. We also mention \cite{laurenccot2017finite} as a representative finite-time blow-up result for parabolic--parabolic Keller--Segel systems with critical diffusion. Additionally, weak solution frameworks for parabolic--elliptic chemotaxis systems have been developed to provide functional settings for continuing solutions beyond blow-up \cite{senba2002weak}. By contrast, while the four-dimensional critical regime relevant to PES has seen recent analytical progress \cite{fujie2017application,yu2026rigorous,fujie2019blowup,hosono2025global}, fourth-order elliptic chemotaxis cascades---i.e.\ the PES system---have overall received comparatively little structural analysis, despite arising naturally in models of biological signalling with two equilibrating intermediate steps.

The present paper does not reprove or generalise the Tao--Winkler critical-mass theorem, and does not assert that fourth-order or logarithmic critical-mass phenomena in chemotaxis are entirely unexplored: \cite{fujie2017application} already exploits Adams-type inequalities for two-chemical systems in dimension four. The novelty of our contribution is structural rather than dynamical: we isolate the operator-theoretic distinction between the fully elliptic cascade and the mixed elliptic--parabolic cascade, and identify the exact static fourth-order interaction $K_\tau=A_\tau^{-1}A^{-1}$ associated with the PES model, together with the corresponding logarithmic kernel coefficient $1/(8\pi^{2}\tau)$ in dimension four and the resulting concentration-scaling candidate mass $M_{\!*}=64\pi^{2}\tau/\chi$. By contrast, the mixed elliptic--parabolic cascade is shown to belong to a Volterra-memory class whose threshold problem is genuinely mixed in space and time, and whose criticality cannot be inferred from a static elliptic scaling argument.

\subsection{Organisation}

We prove the operator-theoretic structure, the high-frequency scaling, the four-dimensional kernel expansion, the entropy-dissipation identity, the concentration-scaling identification of $M_{\!*}$, and the Volterra space--time estimates for MEP, together with conditional sharp lower bounds and continuation criteria. Figure~\ref{fig:mechanisms_schematic} provides a schematic of the structural dichotomy in signal-generation mechanisms.

Section~\ref{sec:PES} develops the operator-theoretic structure of PES, including spectral and Fourier representation of $K_\tau$, scaling, and the four-dimensional logarithmic kernel expansion. Section~\ref{sec:PES-energy} establishes the free-energy structure, entropy dissipation, and concentration-scaling identification of $M_{\!*}$. Section~\ref{sec:MEP} analyses MEP via Duhamel formula, multiplier asymptotics, and space--time Volterra estimates. Section~\ref{sec:open} formulates the open threshold problems and records the resulting conditional statements.

\section{Operator structure of PES}
\label{sec:PES}

We begin the analysis of \eqref{eq:PES-intro} by establishing the operator-theoretic properties of the eliminated interaction $K_\tau=A_\tau^{-1}A^{-1}$ in \eqref{eq:PES-elimination}.

\subsection{Spectral and Fourier representation}
\label{subsec:Ktau-rep}

On $\R^{N}$, $K_\tau$ is the Fourier multiplier
\begin{equation}\label{eq:Ktau-symbol}
        \widehat{K_\tau f}(\xi)=m_\tau(\xi)\widehat f(\xi),
        \qquad
        m_\tau(\xi):=\frac{1}{(1+|\xi|^{2})(1+\tau|\xi|^{2})}.
\end{equation}
Thus $0<m_\tau(\xi)\le 1$ and $m_\tau(\xi)\sim (\tau|\xi|^{4})^{-1}$ as $|\xi|\to\infty$.

On $\T^{N}$ with $f(x)=\sum_{k\in\Z^{N}}\widehat f_k e^{ik\cdot x}$,
\begin{equation}\label{eq:Ktau-torus}
        K_\tau f
        =\sum_{k\in\Z^{N}}
        \frac{\widehat f_k}{(1+|k|^{2})(1+\tau|k|^{2})}\,e^{ik\cdot x}.
\end{equation}
On a bounded $C^{2}$ Neumann domain $\Omega\subset\R^{N}$, let $\{\varphi_j\}_{j\ge 0}$ be the $L^{2}$-orthonormal basis of Neumann eigenfunctions of $-\Delta$ with eigenvalues $0=\lambda_0<\lambda_1\le\lambda_2\le\cdots$. Then
\begin{equation}\label{eq:Ktau-spectral}
        K_\tau f
        =\sum_{j=0}^{\infty}
        \mu_j^{(\tau)}\langle f,\varphi_j\rangle_{L^{2}}\varphi_j,
        \qquad
        \mu_j^{(\tau)}:=\frac{1}{(1+\lambda_j)(1+\tau\lambda_j)}.
\end{equation}

For $s\in\R$, we denote by $\Hil_A^{s}$ the spectral Sobolev space associated with the self-adjoint operator $A=I-\Delta$ on the relevant manifold; thus
\[
        \|f\|_{\Hil_A^{s}}^{2}
        :=\langle A^{s}f,f\rangle
        =\sum_{j}(1+\lambda_j)^{s}|\langle f,\varphi_j\rangle|^{2}
\]
on $\Omega$ (with the obvious modifications on $\R^{N}$ or $\T^{N}$), which coincides with the usual Sobolev space $H^{s}$ in the standard range; cf.~\cite{triebel1992theory}.

\begin{lemma}[Positivity, self-adjointness, and fourth-order smoothing]\label{lem:Ktau-smoothing}
Let $\tau>0$. The operator $K_\tau=A_\tau^{-1}A^{-1}$ is positive and self-adjoint on $L^{2}$. Moreover, for every $s\in\R$,
\begin{equation}\label{eq:Ktau-Hs-smoothing}
        \|K_\tau f\|_{\Hil_A^{s+4}}\le \max\{1,\tau^{-1}\}\,\|f\|_{\Hil_A^{s}}.
\end{equation}
On $\R^{N}$ this reduces to the standard Sobolev estimate $\|K_\tau f\|_{H^{s+4}}\le \max\{1,\tau^{-1}\}\|f\|_{H^{s}}$.
\end{lemma}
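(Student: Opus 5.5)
We will work entirely through the joint spectral calculus of $-\Delta$. On $\R^{N}$ this is the Fourier transform, on $\T^{N}$ it is the Fourier series \eqref{eq:Ktau-torus}, and on a bounded Neumann domain it is the eigenbasis $\{\varphi_j\}$ in \eqref{eq:Ktau-spectral}. In every case $A$ and $A_\tau$ are functions of the same nonnegative self-adjoint operator $L=-\Delta$, namely $A=1+L$ and $A_\tau=1+\tau L$. Hence $K_\tau=g(L)$ with
\[
g(\lambda)=\frac{1}{(1+\lambda)(1+\tau\lambda)}.
\]
Since $0<g\le 1$ on $[0,\infty)$, the functional calculus immediately gives that $K_\tau$ is bounded, self-adjoint, and injective. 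It is also positive:
\[
\langle K_\tau f,f\rangle=\sum_j \mu_j^{(\tau)}|\langle f,\varphi_j\rangle|^{2}>0\qquad\text{for } f\neq 0,
\]
and the analogous statement holds with $\int m_\tau|\widehat f|^{2}\,d\xi$ on $\R^{N}$. Alternatively, one can check self-adjointness directly: $A^{-1}$ and $A_\tau^{-1}$ are commuting bounded self-adjoint operators, so their product is self-adjoint.

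For the smoothing estimate \eqref{eq:Ktau-Hs-smoothing}, we expand in the spectral representation:
\[
\|K_\tau f\|_{\Hil_A^{s+4}}^{2}=\sum_j (1+\lambda_j)^{s+4}\,g(\lambda_j)^{2}\,|\langle f,\varphi_j\rangle|^{2}
=\sum_j (1+\lambda_j)^{s}\left(\frac{1+\lambda_j}{1+\tau\lambda_j}\right)^{2}|\langle f,\varphi_j\rangle|^{2}.
\]
So the whole estimate reduces to the scalar bound
\[
\sup_{\lambda\ge 0}\frac{1+\lambda}{1+\tau\lambda}\le \max\{1,\tau^{-1}\}.
\]
To see this, note that the function $\lambda\mapsto (1+\lambda)/(1+\tau\lambda)$ is monotone on $[0,\infty)$, moving from the value $1$ at $\lambda=0$ to the limit $\tau^{-1}$ as $\lambda\to\infty$. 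Equivalently, the inequality $1+\lambda\le \max\{1,\tau^{-1}\}(1+\tau\lambda)$ can be checked termwise. Taking square roots yields \eqref{eq:Ktau-Hs-smoothing}. On $\R^{N}$ and $\T^{N}$ the same computation goes through with $\lambda_j$ replaced by $|\xi|^{2}$ or $|k|^{2}$. Since $\|f\|_{\Hil_A^{s}}$ is exactly the standard $H^{s}$ norm $\|(1+|\xi|^{2})^{s/2}\widehat f\|$, this also gives the stated Sobolev form.

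The main obstacle is mostly bookkeeping, and it comes in two parts. The first is making sense of the identity $K_\tau=g(L)$ on a Neumann domain for $s<0$, where $f$ is only a distribution in $\Hil_A^{s}$. We handle this by defining $K_\tau$ on the spectral side, using the coefficients $\langle f,\varphi_j\rangle$ understood as the dual pairing, and then extending by density from $L^{2}$. The second is confirming that the Neumann realisations of $A$ and $A_\tau$ share the same eigenbasis, which holds because both are polynomials in the same Neumann Laplacian.
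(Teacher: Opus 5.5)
Your proposal is correct and is essentially the paper's own argument: positivity and self-adjointness come from the spectral (resp.\ Fourier) representation with $0<\mu_j^{(\tau)}\le 1$, and the smoothing estimate reduces to the scalar bound $(1+\lambda)/(1+\tau\lambda)\le\max\{1,\tau^{-1}\}$, which follows from the monotonicity of this M\"obius function on $[0,\infty)$. Your extra remarks on strict positivity and on defining $K_\tau$ spectrally for $s<0$ are harmless refinements that the paper leaves implicit.
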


\begin{proof}
Positivity and self-adjointness follow from the spectral representation \eqref{eq:Ktau-spectral} (resp.\ from $0<m_\tau(\xi)\le 1$ on $\R^{N}$ or $\T^{N}$). For the smoothing estimate, observe
\[
        (1+\lambda_j)^{2}\mu_j^{(\tau)}
        =\frac{1+\lambda_j}{1+\tau\lambda_j}
        \le \max\{1,\tau^{-1}\}\qquad\text{for every }j\ge 0,
\]
since $(1+\lambda)/(1+\tau\lambda)$ is a Möbius function bounded between $1$ and $\tau^{-1}$ on $[0,\infty)$. Hence
\[
        \|K_\tau f\|_{\Hil_A^{s+4}}^{2}
        =\sum_{j}(1+\lambda_j)^{s+4}\bigl(\mu_j^{(\tau)}\bigr)^{2}|\langle f,\varphi_j\rangle|^{2}
        \le \max\{1,\tau^{-2}\}\,\|f\|_{\Hil_A^{s}}^{2}.
\]
The Euclidean case follows identically from Plancherel and the multiplier bound $(1+|\xi|^{2})^{2}m_\tau(\xi)\le\max\{1,\tau^{-1}\}$.
\end{proof}

The associated quadratic form is non-negative: on $\R^{N}$,
\begin{equation}\label{eq:Ktau-quadratic-Fourier}
        \int_{\R^{N}}f\,K_\tau f\,dx
        =\int_{\R^{N}}\frac{|\widehat f(\xi)|^{2}}{(1+|\xi|^{2})(1+\tau|\xi|^{2})}\,d\xi\ge 0,
\end{equation}
and on $\Omega$,
\begin{equation}\label{eq:Ktau-quadratic-spectral}
        \int_{\Omega}f\,K_\tau f\,dx
        =\sum_{j=0}^{\infty}\mu_j^{(\tau)}|\langle f,\varphi_j\rangle|^{2}\ge 0.
\end{equation}

\subsection{Principal symbol and drift order}
\label{subsec:principal-symbol}

The high-frequency expansion of the symbol is
\begin{equation}\label{eq:m-tau-expansion}
        m_\tau(\xi)
        =\frac{1}{\tau|\xi|^{4}}\left[1-\frac{1+\tau^{-1}}{|\xi|^{2}}+O(|\xi|^{-4})\right]
        \quad\text{as }|\xi|\to\infty.
\end{equation}
Equivalently, microlocally at high frequency,
\begin{equation}\label{eq:Ktau-principal-decomp}
        K_\tau=\tau^{-1}(-\Delta)^{-2}+\Op(S^{-6}),
\end{equation}
where the remainder is a pseudodifferential operator whose symbol lies in the standard Hörmander class $S^{-6}$ of symbols decaying at order $-6$.

\begin{proposition}[Principal symbol of PES interaction]\label{prop:Ktau-principal-symbol}
The eliminated PES interaction $K_\tau$ is a fourth-order elliptic smoothing operator. Its principal symbol is $\tau^{-1}|\xi|^{-4}$, and its principal homogeneous part is $\tau^{-1}(-\Delta)^{-2}$. Consequently, the chemotactic drift $\nabla K_\tau$ has principal symbol $i\xi/(\tau|\xi|^{4})$ and principal order $-3$.
\end{proposition}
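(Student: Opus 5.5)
The plan is to work on $\R^{N}$ (and $\T^{N}$) directly with the explicit multiplier $m_\tau$ from \eqref{eq:Ktau-symbol}, and then transfer to the Neumann setting through the spectral representation \eqref{eq:Ktau-spectral}. First I will establish the exact algebraic identity behind \eqref{eq:m-tau-expansion}. For $|\xi|\ge 1$,
\[
        m_\tau(\xi)-\frac{1}{\tau|\xi|^{4}}
        =\frac{\tau|\xi|^{4}-(1+|\xi|^{2})(1+\tau|\xi|^{2})}{\tau|\xi|^{4}(1+|\xi|^{2})(1+\tau|\xi|^{2})}
        =-\frac{1+(1+\tau)|\xi|^{2}}{\tau|\xi|^{4}(1+|\xi|^{2})(1+\tau|\xi|^{2})}.
\]
The right-hand side is $O(|\xi|^{-6})$, with leading term $-(1+\tau^{-1})\tau^{-1}|\xi|^{-6}$. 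This identifies $\tau^{-1}|\xi|^{-4}$ as the leading homogeneous component of the symbol.

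To make the statement $K_\tau=\tau^{-1}(-\Delta)^{-2}+\Op(S^{-6})$ honest, I will insert a smooth cutoff $\psi\in C^\infty(\R^{N})$ with $\psi=0$ near $0$ and $\psi=1$ for $|\xi|\ge 1$. This is needed because $|\xi|^{-4}$ is singular at the origin, so the principal part is understood as $\tau^{-1}\psi(\xi)|\xi|^{-4}$. I will then check that
\[
r(\xi):=m_\tau(\xi)-\tau^{-1}\psi(\xi)|\xi|^{-4}
\]
belongs to $S^{-6}$, meaning $|\partial^\alpha r(\xi)|\lesssim\langle\xi\rangle^{-6-|\alpha|}$. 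On compact sets this is trivial, since both pieces are smooth there. For $|\xi|\ge 1$ the formula above writes $r$ as a rational function in $|\xi|^{2}$ whose denominator is $\tau|\xi|^{4}(1+|\xi|^{2})(1+\tau|\xi|^{2})$, of degree $8$, and whose numerator has degree $2$. Each derivative of such a classical rational symbol gains one power of decay, which I will show either by induction or by noting that it is a sum of homogeneous terms of degree $\le -6$. This derivative-gain step is the only point needing care; I expect it to be routine bookkeeping rather than a genuine obstacle.

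Ellipticity of order $-4$ follows from the two-sided bound $c\langle\xi\rangle^{-4}\le m_\tau(\xi)\le C\langle\xi\rangle^{-4}$. The upper bound is the estimate $(1+|\xi|^2)^2m_\tau\le\max\{1,\tau^{-1}\}$ already used in Lemma~\ref{lem:Ktau-smoothing}; the lower bound holds with $c=\min\{1,\tau^{-1}\}$ by the same Möbius argument. Thus $K_\tau^{-1}=A_\tau A$ is a fourth-order differential operator, which gives the ellipticity claim.

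For the drift, $\nabla K_\tau$ has symbol $i\xi\,m_\tau(\xi)=i\xi/(\tau|\xi|^{4})+i\xi\,r(\xi)$. Multiplying by $i\xi\in S^{1}$ maps $S^{-6}$ into $S^{-5}$, so the principal symbol of the drift is $i\xi/(\tau|\xi|^{4})$, which is homogeneous of degree $-3$, and its principal order is $-3$. On $\T^{N}$ the same symbol is sampled at $k\in\Z^{N}$, so the argument is identical. On a Neumann domain, the interior principal symbol of $A_\tau^{-1}A^{-1}$ is the product of the principal symbols of the parametrices of $A_\tau$ and $A$, namely $(\tau|\xi|^{2})^{-1}|\xi|^{-2}$, by the symbol calculus. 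I would state this only in the interior, where standard parametrix theory applies, since the proposition makes no boundary claim.
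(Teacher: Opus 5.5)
Your proposal is correct and follows essentially the paper's route. The paper factors $m_\tau(\xi)=(\tau|\xi|^{4})^{-1}(1+|\xi|^{-2})^{-1}(1+(\tau|\xi|^{2})^{-1})^{-1}$, expands each factor as a geometric series for $|\xi|^{2}>\max\{1,\tau^{-1}\}$, calls the resulting $\Op(S^{-6})$ decomposition standard, and then multiplies by $i\xi$ for the drift; your exact remainder identity, the cutoff $\psi$ that removes the origin singularity of $|\xi|^{-4}$, the symbol-class derivative bounds, and the two-sided bound for ellipticity just supply details the paper leaves implicit.
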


\begin{proof}
The asymptotic expansion \eqref{eq:m-tau-expansion} is obtained by writing
$
        m_\tau(\xi)=(\tau|\xi|^{4})^{-1}(1+|\xi|^{-2})^{-1}(1+(\tau|\xi|^{2})^{-1})^{-1}
$
and expanding each factor in a geometric series for $|\xi|^{2}>\max\{1,\tau^{-1}\}$. The corresponding pseudodifferential decomposition \eqref{eq:Ktau-principal-decomp} is then standard. Differentiation in $x$ multiplies the symbol by $i\xi$, lowering the symbol-decay order from $4$ to $3$.
\end{proof}

\subsection{Scaling and the critical Lebesgue exponent}
\label{subsec:PES-scaling}

The full Bessel-product operator $K_\tau$ is not exactly scale-invariant because $A$ and $A_\tau$ contain zeroth-order terms; however, its principal homogeneous part $\tau^{-1}(-\Delta)^{-2}$ is exactly invariant under dilations. The following scaling discussion concerns this principal homogeneous regime, which dictates the high-frequency dynamics.

For $\lambda>0$, define
\begin{equation}\label{eq:PES-scaling-def}
        u_\lambda(x,t):=\lambda^{\alpha}u(\lambda x,\lambda^{2}t).
\end{equation}
Then $\partial_t u_\lambda=\lambda^{\alpha+2}(\partial_t u)(\lambda x,\lambda^{2}t)$ and $\Delta u_\lambda=\lambda^{\alpha+2}(\Delta u)(\lambda x,\lambda^{2}t)$. Since the principal part of $K_\tau$ has symbol of order $-4$, it transforms under dilations as
\begin{equation}\label{eq:Ktau-scaling-principal}
        (K_\tau u_\lambda)(x,t)
        \sim\lambda^{\alpha-4}(K_\tau u)(\lambda x,\lambda^{2}t),
\end{equation}
in the sense that the equality holds exactly with $K_\tau$ replaced by its principal part $\tau^{-1}(-\Delta)^{-2}$, with lower-order terms suppressed by an additional factor $\lambda^{-2}$ at large $\lambda$. Consequently $\nabla K_\tau u_\lambda\sim\lambda^{\alpha-3}(\nabla K_\tau u)(\lambda x,\lambda^{2}t)$ and the nonlinear aggregation term scales as
\begin{equation}\label{eq:nonlinear-scaling}
        \Div(u_\lambda\nabla K_\tau u_\lambda)
        \sim\lambda^{2\alpha-2}\bigl[\Div(u\nabla K_\tau u)\bigr](\lambda x,\lambda^{2}t).
\end{equation}
Balancing with $\partial_t u_\lambda$ and $\Delta u_\lambda$ requires $\alpha+2=2\alpha-2$, i.e.\ $\alpha=4$.

\begin{proposition}[Scaling of PES]\label{prop:PES-scaling}
The principal parabolic scaling of PES is
\begin{equation}\label{eq:PES-scaling-final}
        u_\lambda(x,t)=\lambda^{4}u(\lambda x,\lambda^{2}t).
\end{equation}
Under this scaling, for $1\le q<\infty$,
\begin{equation}\label{eq:Lq-scaling-final}
        \|u_\lambda(\cdot,t)\|_{L^{q}(\R^{N})}
        =\lambda^{4-N/q}\,\|u(\cdot,\lambda^{2}t)\|_{L^{q}(\R^{N})}.
\end{equation}
Therefore the scaling-critical Lebesgue exponent is $q_c=N/4$, and the mass-critical dimension is $N=4$.
\end{proposition}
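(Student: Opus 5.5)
The proof splits into three short computations, carried out in order. The scaling balance is already derived just above the statement, so the argument mostly records it.

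\emph{Step 1: the exponent $\alpha=4$.} Set $u_\lambda(x,t)=\lambda^{\alpha}u(\lambda x,\lambda^{2}t)$. Work with the principal part $\tau^{-1}(-\Delta)^{-2}$ of $K_\tau$, which is exact up to $\Op(S^{-6})$ by \eqref{eq:Ktau-principal-decomp} and Proposition~\ref{prop:Ktau-principal-symbol}. The dilation identity for the homogeneous multiplier is checked on the Fourier side. The dilation $f\mapsto f(\lambda\cdot)$ maps $\widehat f$ to $\lambda^{-N}\widehat f(\cdot/\lambda)$. Multiplying by $|\xi|^{-4}$ and substituting $\xi=\lambda\eta$ produces the factor $\lambda^{-4}$. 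Hence $(-\Delta)^{-2}[f(\lambda\cdot)]=\lambda^{-4}[(-\Delta)^{-2}f](\lambda\cdot)$. This gives \eqref{eq:Ktau-scaling-principal} exactly for the principal part. The lower-order remainder carries the extra factor $\lambda^{-2}$ from the $O(|\xi|^{-6})$ term in \eqref{eq:m-tau-expansion}, and it drops out as $\lambda\to\infty$.

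With the chain rule, the time derivative and diffusion scale like $\lambda^{\alpha+2}$, and the aggregation term scales like $\lambda^{2\alpha-2}$, as in \eqref{eq:nonlinear-scaling}. Equating $\alpha+2=2\alpha-2$ gives $\alpha=4$. This proves \eqref{eq:PES-scaling-final}. In this limit $u_\lambda$ solves the principal PES equation whenever $u$ does.

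\emph{Step 2: the $L^q$ identity.} Compute $\|u_\lambda(\cdot,t)\|_{L^q}^q=\lambda^{4q}\int|u(\lambda x,\lambda^2t)|^q\,dx$. The change of variables $y=\lambda x$, with $dx=\lambda^{-N}dy$, gives \eqref{eq:Lq-scaling-final} after taking $q$-th roots.

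\emph{Step 3: the critical exponent and dimension.} The norm is invariant if and only if $4-N/q=0$, so the scaling-critical exponent is $q_c=N/4$. The mass $\|u\|_{L^1}$ is invariant exactly when $q_c=1$, that is, $N=4$; this is the mass-critical dimension.

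The only point needing care is the meaning of the scaling. $K_\tau$ itself is not dilation-invariant, so the statement must be read in the high-frequency, principal-part sense. I would state it explicitly as an exact invariance of the equation with $K_\tau$ replaced by $\tau^{-1}(-\Delta)^{-2}$, with the $S^{-6}$ correction being lower order. In $N=4$ that principal part must be understood modulo constants, since $|\xi|^{-4}$ is not locally integrable there; only its gradient enters the drift, so this does not affect the argument. Apart from this, the steps are routine.
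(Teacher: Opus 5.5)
Your proposal is correct and follows the paper's proof. You obtain $\alpha=4$ from the same balance $\alpha+2=2\alpha-2$, using the principal part $\tau^{-1}(-\Delta)^{-2}$, and you get the $L^q$ identity, $q_c=N/4$ and mass-criticality at $N=4$ from the same change of variables $y=\lambda x$. Your Fourier-side check of the dilation identity and your note that in $N=4$ the principal part is defined only modulo constants (harmless, since only its gradient enters the drift) are useful refinements of points the paper leaves implicit.
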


\begin{proof}
The scaling exponent $\alpha=4$ was derived above. For \eqref{eq:Lq-scaling-final},
\[
        \|u_\lambda(\cdot,t)\|_{L^{q}}^{q}
        =\int_{\R^{N}}\lambda^{4q}|u(\lambda x,\lambda^{2}t)|^{q}\,dx
        =\lambda^{4q-N}\int_{\R^{N}}|u(y,\lambda^{2}t)|^{q}\,dy.
\]
Taking the $q$-th root yields \eqref{eq:Lq-scaling-final}. The mass case $q=1$ gives invariance precisely at $N=4$.
\end{proof}

\begin{remark}[Comparison with classical Keller--Segel]\label{rem:KS-comparison}
For the classical parabolic--elliptic Keller--Segel system, the drift $\nabla(-\Delta)^{-1}$ has principal order $-1$ and the scaling balance gives $\alpha=2$, so the critical Lebesgue exponent is $q_c=N/2$, with mass criticality in $N=2$. The PES drift $\nabla K_\tau$ has principal order $-3$, the scaling exponent is $\alpha=4$, and the critical exponent is $q_c=N/4$. In particular, $L^{N/2}$ is \emph{not} the scaling-critical space for PES.
\end{remark}

\subsection{Logarithmic kernel in dimension four}
\label{subsec:log-kernel}

We now identify the local singularity of the integral kernel of $K_\tau$ in dimension four. The argument uses partial fractions together with the classical asymptotic expansion of the four-dimensional Bessel potential of order two.

\begin{lemma}[Bessel kernel $G_2$ in $\R^{4}$]\label{lem:bessel-G2}
Let $N=4$. The kernel
\begin{equation}\label{eq:G2-definition}
        G_2(x):=\mathcal{F}^{-1}\!\left[\frac{1}{1+|\xi|^{2}}\right](x),\qquad x\in\R^{4},
\end{equation}
is a positive radial function in $C^{\infty}(\R^{4}\setminus\{0\})$ which decays exponentially at infinity and admits the local expansion
\begin{equation}\label{eq:G2-expansion}
        G_2(x)=\frac{1}{4\pi^{2}|x|^{2}}-\frac{1}{8\pi^{2}}\log\frac{1}{|x|}+\rho_2(x),
\end{equation}
where $\rho_2$ is continuous on $\R^{4}$ and bounded in a neighbourhood of the origin.
\end{lemma}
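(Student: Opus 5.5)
The most direct route is the heat-semigroup (subordination) representation
\[
        G_2(x)=\int_0^\infty e^{-t}\,(4\pi t)^{-2}e^{-|x|^2/(4t)}\,dt,\qquad x\in\R^4\setminus\{0\},
\]
which follows from $(1+|\xi|^2)^{-1}=\int_0^\infty e^{-t(1+|\xi|^2)}\,dt$ and the formula for the four-dimensional heat kernel in the Fourier convention of Remark~\ref{conv:order}. This representation immediately gives three of the claimed properties.
\begin{itemize}
\item $G_2$ is positive and radial, since the integrand is.
\item $G_2$ is $C^\infty$ on $\R^4\setminus\{0\}$, by differentiating under the integral; for $|x|\ge\delta>0$ the Gaussian factor makes all $x$-derivatives integrable in $t$.
\item $G_2$ decays exponentially. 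Splitting the integral at $t=|x|/2$, the range $t<|x|/2$ is controlled by $e^{-|x|^2/(4t)}\le e^{-|x|/2}$, and the range $t\ge |x|/2$ by $e^{-t}\le e^{-|x|/2}$.
\end{itemize}
Substituting $t=rs/2$ with $r=|x|$ and using $K_\nu(r)=\tfrac12\int_0^\infty s^{-\nu-1}e^{-\frac r2(s+1/s)}\,ds$ identifies the kernel in closed form:
\[
        G_2(x)=\frac{1}{4\pi^2}\,\frac{K_1(|x|)}{|x|}.
\]
This is the $N=4$ case of $G_2=(2\pi)^{-N/2}r^{1-N/2}K_{N/2-1}(r)$.

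The local expansion then comes from the classical small-argument series of the modified Bessel function,
\[
        K_1(r)=\frac1r+\frac r2\log\frac r2+\frac r4(2\gamma-1)+O(r^3\log r),\qquad r\downarrow 0.
\]
Dividing by $4\pi^2 r$ gives
\[
        G_2(x)=\frac{1}{4\pi^2r^2}+\frac{1}{8\pi^2}\log r+\Bigl[-\frac{\log 2}{8\pi^2}+\frac{2\gamma-1}{16\pi^2}\Bigr]+O(r^2\log r).
\]
The constant and the $O(r^2\log r)$ terms form $\rho_2$ near the origin, and $\log r=-\log(1/r)$ gives exactly \eqref{eq:G2-expansion}. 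Away from the origin, $\rho_2$ is defined as $G_2$ minus the two explicit terms. It is continuous there because $G_2$ is smooth and the subtracted terms are smooth off $0$. At $0$ it extends continuously with the value of the bracketed constant.

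The main obstacle I expect is bookkeeping of constants: the Fourier normalisation, $|S^3|=2\pi^2$, and the precise logarithmic coefficient. As an independent check, I would verify the coefficient of the logarithm by a PDE argument that does not rely on Bessel asymptotics.

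First, $\Gamma(x)=1/(4\pi^2|x|^2)$ is the fundamental solution of $-\Delta$ on $\R^4$, since $\Gamma=1/((N-2)|S^{N-1}|r^{N-2})$ with $N=4$. Because $(I-\Delta)G_2=\delta_0$, the difference $h:=G_2-\Gamma$ satisfies $-\Delta h=-G_2=-\Gamma+O(\log(1/r))$ in the distributional sense near $0$.

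Second, for a radial ansatz $h=a\log r$, the radial Laplacian gives $h''+\tfrac3r h'=2a/r^2$. Matching $-\Delta h=-1/(4\pi^2r^2)$ forces $a=1/(8\pi^2)$, in agreement with the Bessel computation.

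Third, the remaining source term is $O(\log(1/r))\in L^p_{\loc}$ for every $p<\infty$. Elliptic regularity then makes the corresponding correction continuous and bounded near $0$, which gives a second proof that $\rho_2$ is bounded near the origin.
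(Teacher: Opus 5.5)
Your proof is correct, but its main line differs from the paper's. The paper never finds $G_2$ in closed form. It cites Stein for positivity, smoothness, exponential decay and the a priori bound $G_2-\Gamma=O(\log(1/|x|))$. It then subtracts the Newtonian kernel $\Gamma=1/(4\pi^2|x|^2)$ and removes the next singular layer with the explicit corrector $(8\pi^2)^{-1}\log|x|$, using $\Delta\log|x|=2/|x|^2$ in $\R^4$. Boundedness of $\rho_2$ then follows from elliptic regularity, since the right-hand side $-(G_2-\Gamma)$ is only logarithmically singular. That is essentially the ``independent check'' in the second half of your proposal.

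Your identification $G_2=(4\pi^2)^{-1}K_1(|x|)/|x|$, with the small-argument series of $K_1$, gives more:
\begin{itemize}
\item It is self-contained: no a priori $O(\log)$ bound is imported.
\item It yields the exact value $\rho_2(0)=(2\gamma-1-2\log 2)/(16\pi^2)$ and the remainder $O(|x|^2\log(1/|x|))$. So $\rho_2$ is in fact $C^1$, not merely bounded and continuous.
\item Exponential decay can be read off from $K_1(r)\sim\sqrt{\pi/(2r)}\,e^{-r}$.
\item It matches how the paper itself handles $\tau=1$ via $K_0$.
\end{itemize}
The paper's PDE route, in turn, uses no special functions and only local information about the operator. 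That is what lets the same mechanism carry over to the torus and to interior points of Neumann domains, where no closed form exists.

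Two small repairs are needed. First, in your exponential-decay splitting, the bound $e^{-|x|^2/(4t)}\le e^{-|x|/2}$ alone is not enough on $t<|x|/2$, because $\int_0^{|x|/2}t^{-2}\,dt=\infty$. Keep part of the Gaussian, or substitute $u=|x|^2/(4t)$; the latter gives exactly $e^{-|x|/2}/(4\pi^2|x|^2)$ for that piece. Second, your PDE check assumes $G_2-\Gamma=O(\log(1/r))$. To make it independent of the Bessel expansion, write $G_2-\Gamma=\int_0^\infty(e^{-t}-1)(4\pi t)^{-2}e^{-|x|^2/(4t)}\,dt$ and use $|e^{-t}-1|\le\min\{t,1\}$. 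This gives $|G_2-\Gamma|\le(8\pi^2)^{-1}\log(1/|x|)+O(1)$, which is precisely the input the paper takes from Stein.
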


\begin{proof}
The smoothness of $G_2$ away from $0$, positivity, and exponential decay are classical \cite{stein1970singular}. The function $G_2$ solves
\begin{equation}\label{eq:G2-equation}
        -\Delta G_2+G_2=\delta_0\qquad\text{in }\mathcal D'(\R^{4}).
\end{equation}
Let $G_0(x):=1/(4\pi^{2}|x|^{2})$ be the Newtonian fundamental solution of $-\Delta$ on $\R^{4}$, so $-\Delta G_0=\delta_0$. Then $R:=G_2-G_0$ satisfies
\begin{equation}\label{eq:R-equation}
        -\Delta R=-G_2\qquad\text{in }\R^{4}\setminus\{0\},
\end{equation}
with $R(x)=O(\log(1/|x|))$ as $|x|\to 0$ by the heat-kernel representation of $G_2$ \cite{stein1970singular}. A direct radial computation gives $\Delta(\log|x|)=2/|x|^{2}$ in $\R^{4}$. Hence the function $w(x):=-(8\pi^{2})^{-1}\log(1/|x|)$ satisfies
\[
    -\Delta w=-\frac{1}{4\pi^{2}|x|^{2}}=-G_0(x),
\]
and consequently
\[
        -\Delta(R-w)=-G_2+G_0=-R.
\]
The right-hand side has at most logarithmic singularity, so standard elliptic regularity gives $R-w\in L^{\infty}_{\loc}(\R^{4})$. Setting $\rho_2:=R-w\in C^{0}(\R^{4})$ yields \eqref{eq:G2-expansion}.
\end{proof}

\begin{figure}[!h]
    \centering
    \includegraphics[width=0.8\textwidth]{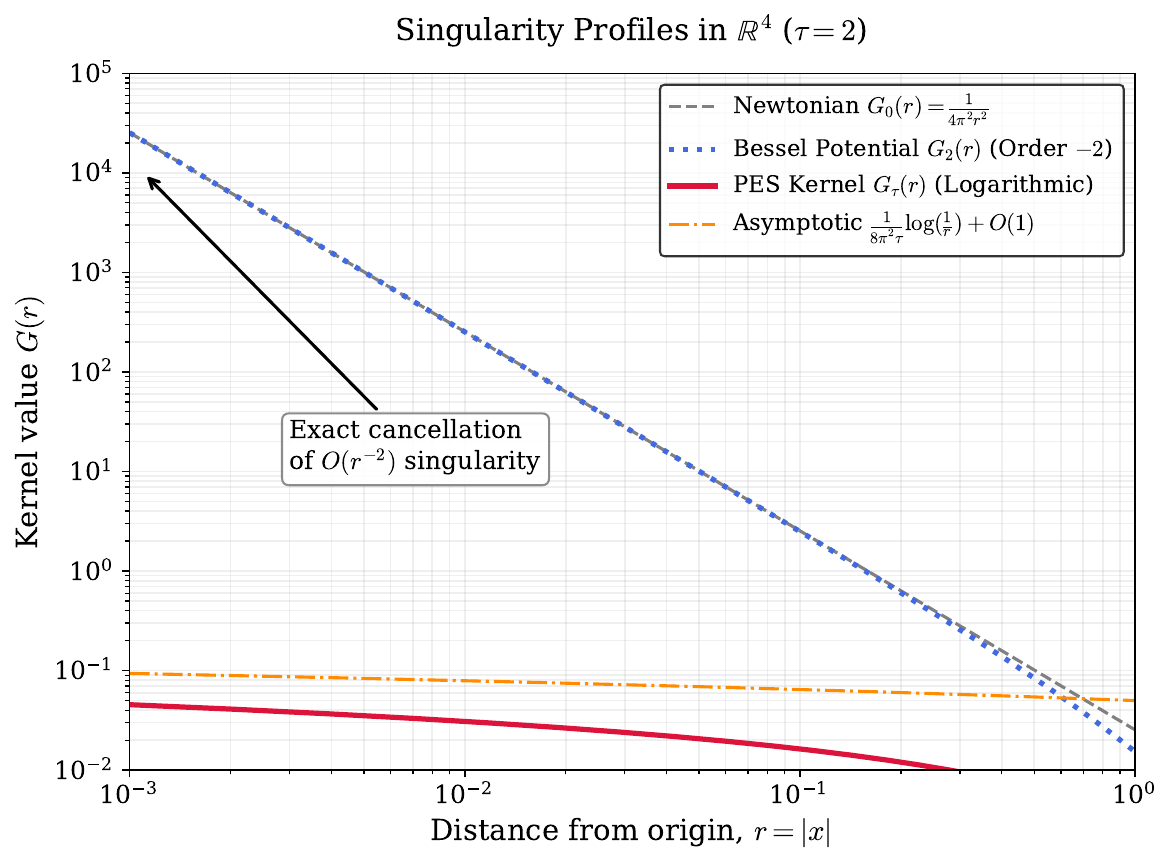}
    \caption{Comparison of radial singularity profiles in $\mathbb{R}^4$ ($\tau=2$). The classical Newtonian potential and the single Bessel potential $G_2(r)$ exhibit a steep $O(r^{-2})$ singularity at the origin. In contrast, the eliminated PES interaction kernel $G_\tau(r)$ demonstrates an exact algebraic cancellation of this leading singularity, resulting in a softened, purely logarithmic divergence. This structural softening is the geometric mechanism that shifts the mass-critical dimension of the cascade from $N=2$ to $N=4$.}
    \label{fig:kernel_singularity}
\end{figure}

\begin{proposition}[Logarithmic kernel of $K_\tau$ in $N=4$]\label{prop:log-kernel-R4}
Let $N=4$ and $\tau>0$. The integral kernel $G_\tau$ of $K_\tau$ on $\R^{4}$ admits the local expansion
\begin{equation}\label{eq:Gtau-expansion-R4}
        G_\tau(x)=\frac{1}{8\pi^{2}\tau}\log\frac{1}{|x|}+R_\tau(x),
\end{equation}
where $R_\tau\in C^{0}(\R^{4})$ is bounded in a neighbourhood of the origin. The same expansion holds on $\T^{4}$ for $x$ in any fundamental domain. On a bounded $C^{\infty}$ Neumann domain $\Omega\subset\R^{4}$, the integral kernel $G_\tau(x,y)$ of the Neumann realisation of $K_\tau$ admits, for every compact set $\mathcal K\Subset\Omega$,
\begin{equation}\label{eq:Gtau-expansion-domain}
        G_\tau(x,y)=\frac{1}{8\pi^{2}\tau}\log\frac{1}{|x-y|}+R_\tau(x,y),\qquad (x,y)\in\mathcal K\times\mathcal K,
\end{equation}
with $R_\tau\in L^{\infty}(\mathcal K\times\mathcal K)$. The same conclusion holds on bounded $C^{2}$ Neumann domains under the corresponding interior Green-kernel regularity assumption \emph{(}see Remark~\ref{rem:boundary-low-regularity}\emph{)}.
\end{proposition}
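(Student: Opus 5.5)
The plan is to use partial fractions to reduce the kernel of $K_\tau$ to two rescaled copies of the Bessel kernel $G_2$ from Lemma~\ref{lem:bessel-G2}. I treat the case $\tau\neq1$ first and handle $\tau=1$ separately. The Fourier multiplier decomposes as
\[
        m_\tau(\xi)=\frac{1}{(1+|\xi|^{2})(1+\tau|\xi|^{2})}
        =\frac{1}{1-\tau}\left[\frac{1}{1+|\xi|^{2}}-\frac{\tau}{1+\tau|\xi|^{2}}\right].
\]
The multiplier $1/(1+\tau|\xi|^{2})$ is $1/(1+|\sqrt\tau\,\xi|^{2})$. Its inverse Fourier transform in $\R^{4}$ is therefore $\tau^{-2}G_2(x/\sqrt\tau)$, and this yields
\[
        G_\tau(x)=\frac{1}{1-\tau}\Bigl[G_2(x)-\tau^{-1}G_2(x/\sqrt\tau)\Bigr].
\]

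Next I insert the expansion \eqref{eq:G2-expansion} into both terms.

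\emph{Newtonian parts.} For the rescaled term, $\tau^{-1}G_0(x/\sqrt\tau)=\tau^{-1}\cdot\tau/(4\pi^{2}|x|^{2})=G_0(x)$, so the two $|x|^{-2}$ singularities cancel exactly. This is the algebraic cancellation highlighted in the introduction.

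\emph{Logarithmic parts.} The log terms combine as
\[
        \frac{1}{1-\tau}\Bigl[-\tfrac{1}{8\pi^{2}}\log\tfrac1{|x|}+\tfrac{1}{8\pi^{2}\tau}\log\tfrac{\sqrt\tau}{|x|}\Bigr]
        =\frac{1}{1-\tau}\cdot\frac{1-\tau}{8\pi^{2}\tau}\log\frac1{|x|}+\text{const},
\]
which gives exactly the coefficient $1/(8\pi^{2}\tau)$.

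\emph{Remainder.} The $\rho_2$ terms are continuous and locally bounded. Away from the origin, $G_\tau$ is smooth because $G_2$ is, so $R_\tau\in C^{0}(\R^{4})$ after checking continuity at $0$.

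\emph{The case $\tau=1$.} Here partial fractions degenerate. I would handle it either by continuity in $\tau$ (differentiating the formula in $\tau$ at $\tau=1$, i.e. $G_1=-\partial_\tau[\tau^{-1}G_2(x/\sqrt\tau)]|_{\tau=1}$ up to sign) or directly. The direct route: $G_1=G_2*G_2$ solves $-\Delta G_1+G_1=G_2$. I then repeat the argument of Lemma~\ref{lem:bessel-G2}: subtract $w=(8\pi^{2})^{-1}\log(1/|x|)$, which satisfies $-\Delta w=G_0$. The difference then solves an equation whose right-hand side is only log-singular, so it is locally bounded. This direct elliptic argument actually works for every $\tau$: one uses $-\tau\Delta G_\tau+G_\tau=G_2$ and subtracts $\tau^{-1}w$. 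I would present it as a uniform backup.

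\emph{Torus and Neumann domain.} On $\T^{4}$, the periodic kernel is $\sum_{k}G_\tau(x+2\pi k)$, up to the convention for normalisation. Exponential decay of $G_2$ makes the sum over $k\neq0$ smooth on a fundamental domain, which gives the same expansion. On a bounded $C^\infty$ Neumann domain, I write $G_\tau^{\Omega}(x,y)=G_\tau(x-y)+H(x,y)$. In $x$, $H$ satisfies $(I-\tau\Delta)(I-\Delta)H=0$ in the interior with boundary data that are smooth when $y\in\mathcal K$. Interior elliptic regularity for this fourth-order operator, uniform in $y\in\mathcal K$, then makes $H$ bounded on $\mathcal K\times\mathcal K$.

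I expect the main obstacle to be this domain step. Establishing uniform-in-$y$ boundedness of $H$ needs a clean justification. I would first try composing the two second-order Neumann Green functions, $G^{\Omega}_\tau=G^{\Omega}_{A_\tau}\circ G^{\Omega}_{A}$. Each factor equals its free-space counterpart plus a correction that is smooth on compact subsets of $\Omega\times\Omega$ (the classical parametrix result). The cross terms are then integrals of one smooth factor against one integrable kernel, and are bounded on $\mathcal K\times\mathcal K$. The remaining difficulty is controlling these integrals near $\partial\Omega$, where the correction is not smooth. Integrability of the free kernels together with boundary regularity of the Neumann correction in $L^{1}$ should suffice, and that is the only place the $C^\infty$ hypothesis, or the assumption in the $C^{2}$ remark, is used.
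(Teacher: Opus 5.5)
Your proof is correct and follows the paper's route: partial fractions for $\tau\neq1$ with the exact cancellation $\tau^{-1}G_0(x/\sqrt\tau)=G_0(x)$, periodisation on $\T^{4}$, and on $\Omega$ the free kernel plus an interior-smooth correction (the paper just cites the pseudodifferential interior parametrix, and your composition of the two second-order Neumann Green functions is a concrete version of it). The one real difference is at $\tau=1$: the paper uses the explicit formula $G_1=(8\pi^{2})^{-1}K_0(|x|)$ with the small-$r$ asymptotics of $K_0$, whereas your subtraction of $\tau^{-1}w$ from $-\tau\Delta G_\tau+G_\tau=G_2$ works uniformly in $\tau>0$ and avoids the degenerate partial fraction; it tacitly needs the a priori bound $G_\tau\in L^{p}_{\loc}$ for all $p<\infty$, which follows e.g.\ from $m_\tau\in L^{q}(\R^{4})$ for every $q>1$.
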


\begin{proof}
\emph{Step 1: Partial-fraction decomposition for $\tau\ne 1$.}
For $\tau\ne 1$, the partial-fraction identity is
\begin{equation}\label{eq:partial-fractions}
        \frac{1}{(1+|\xi|^{2})(1+\tau|\xi|^{2})}
        =\frac{1}{1-\tau}\cdot\frac{1}{1+|\xi|^{2}}
        -\frac{\tau}{1-\tau}\cdot\frac{1}{1+\tau|\xi|^{2}}.
\end{equation}
The inverse Fourier transform of $(1+\tau|\xi|^{2})^{-1}$ is obtained by the dilation $\xi=\eta/\sqrt{\tau}$:
\[
        \mathcal{F}^{-1}\!\left[\frac{1}{1+\tau|\xi|^{2}}\right]\!(x)
        =\tau^{-2}G_2(x/\sqrt{\tau})\qquad\text{in }\R^{4}.
\]
Therefore
\begin{equation}\label{eq:Gtau-partial-fraction}
        G_\tau(x)=\frac{1}{1-\tau}\!\left[G_2(x)-\frac{1}{\tau}\,G_2(x/\sqrt{\tau})\right].
\end{equation}

\emph{Step 2: Cancellation of the leading $|x|^{-2}$ singularity.}
By Lemma~\ref{lem:bessel-G2},
\[
        G_2(x)=\frac{1}{4\pi^{2}|x|^{2}}-\frac{1}{8\pi^{2}}\log\frac{1}{|x|}+\rho_2(x),
\]
and the same lemma applied at $x/\sqrt{\tau}$ gives
\[
        G_2(x/\sqrt{\tau})
        =\frac{\tau}{4\pi^{2}|x|^{2}}-\frac{1}{8\pi^{2}}\log\frac{\sqrt{\tau}}{|x|}+\rho_2(x/\sqrt{\tau}).
\]
Hence
\[
        \frac{1}{\tau}\,G_2(x/\sqrt{\tau})
        =\frac{1}{4\pi^{2}|x|^{2}}
        -\frac{1}{8\pi^{2}\tau}\log\frac{1}{|x|}
        -\frac{\log\sqrt{\tau}}{8\pi^{2}\tau}
        +\tau^{-1}\rho_2(x/\sqrt{\tau}),
\]
and therefore
\[
        G_2(x)-\frac{1}{\tau}\,G_2(x/\sqrt{\tau})
        =\Bigl(\frac{1}{8\pi^{2}\tau}-\frac{1}{8\pi^{2}}\Bigr)\log\frac{1}{|x|}+\tilde\rho(x)
        =\frac{1-\tau}{8\pi^{2}\tau}\log\frac{1}{|x|}+\tilde\rho(x),
\]
with $\tilde\rho$ continuous and bounded near the origin. Combining with \eqref{eq:Gtau-partial-fraction},
\[
        G_\tau(x)=\frac{1}{8\pi^{2}\tau}\log\frac{1}{|x|}+R_\tau(x),
        \qquad R_\tau(x):=\frac{\tilde\rho(x)}{1-\tau},
\]
which proves \eqref{eq:Gtau-expansion-R4} for $\tau\ne 1$. The crucial cancellation is that the $|x|^{-2}$ coefficient in $\frac{1}{\tau}G_2(x/\sqrt{\tau})$ matches that of $G_2(x)$ identically, eliminating the leading singularity. (see Figure~\ref{fig:kernel_singularity})

\emph{Step 3: The case $\tau=1$.}
If $\tau=1$, then $K_1=A^{-2}$ has Fourier multiplier $(1+|\xi|^{2})^{-2}$, and its kernel is the Bessel potential of order four on $\R^{4}$. By the heat-kernel representation \cite{stein1970singular},
\[
        G_1(x)
        =\mathcal{F}^{-1}\!\left[\frac{1}{(1+|\xi|^{2})^{2}}\right]\!(x)
        =\frac{1}{(4\pi)^{2}\Gamma(2)}\int_0^{\infty}t^{-1}e^{-t-|x|^{2}/(4t)}\,dt
        =\frac{1}{8\pi^{2}}K_0(|x|),
\]
where $K_0$ is the modified Bessel function of the second kind of order zero. The classical asymptotics
\[
        K_0(r)=\log\frac{1}{r}+(\log 2-\gamma_E)+O(r^{2}\log r)\qquad\text{as }r\downarrow 0
\]
(with $\gamma_E$ Euler's constant) give
\[
        G_1(x)=\frac{1}{8\pi^{2}}\log\frac{1}{|x|}+R_1(x),
\]
with $R_1$ continuous and bounded near $0$, in agreement with \eqref{eq:Gtau-expansion-R4} at $\tau=1$. Continuity of $R_\tau$ in $\tau$ follows from continuity in $\tau$ of the multiplier $m_\tau$ on every set $\{|\xi|\ge\varepsilon\}$.

\emph{Step 4: Torus.}
On $\T^{4}=\R^{4}/(2\pi\Z)^{4}$, the kernel is
\[
        G_\tau^{\T}(x)=\sum_{k\in(2\pi\Z)^{4}}G_\tau(x+k),
\]
the sum over $k\ne 0$ defining a $C^{\infty}$ function on any fundamental domain. Hence \eqref{eq:Gtau-expansion-R4} holds on $\T^{4}$ with the same leading coefficient.

\emph{Step 5: Bounded $C^{\infty}$ Neumann domain.}
For a bounded $C^{\infty}$ Neumann domain $\Omega\subset\R^{4}$, the standard interior parametrix construction for elliptic pseudodifferential operators (cf.~\cite{taylor2010pseudodifferential}) decomposes the Neumann realisation of $K_\tau$ on $\Omega$ as a Euclidean parametrix with kernel $G_\tau$ (from $\R^{4}$) plus a smoothing remainder of order $-\infty$ on any interior compact $\mathcal K$. Inserting \eqref{eq:Gtau-expansion-R4} into the parametrix gives \eqref{eq:Gtau-expansion-domain}.
\end{proof}

\begin{remark}[Low-regularity domains]\label{rem:boundary-low-regularity}
The interior parametrix in Step~5 uses smooth-coefficient pseudodifferential calculus, which applies cleanly on $C^{\infty}$ domains. For bounded $C^{2}$ Neumann domains, the analogue of \eqref{eq:Gtau-expansion-domain} should still hold with the same leading coefficient at any interior compact $\mathcal K\Subset\Omega$, by a low-regularity Green-kernel argument for the operator $A_\tau A=(I-\tau\Delta)(I-\Delta)$; we do not record the details. Concentration computations later in the paper are stated for interior concentration points $x_0\in\Omega$ with $\dist(x_0,\partial\Omega)>0$, so only the interior expansion is used.
\end{remark}

\begin{remark}[Boundary effects on concentration]\label{rem:boundary}
The expansion \eqref{eq:Gtau-expansion-domain} is an \emph{interior} expansion. Concentration near $\partial\Omega$ for the Neumann problem requires additional boundary-parametrix analysis (with the precise coefficient of the local logarithmic singularity at boundary points typically modified by reflected-image contributions). All concentration computations below are stated for interior $x_0$.
\end{remark}

\section{Free energy, dissipation, and concentration scaling}
\label{sec:PES-energy}

We now exploit the operator-theoretic structure of $K_\tau$ to identify the gradient-flow structure of PES, derive the entropy-dissipation identity, and compute the concentration scaling that singles out $M_{\!*}=64\pi^{2}\tau/\chi$ as the candidate critical mass in dimension four.

\subsection{Eliminated PES equation as a gradient flow}
\label{subsec:PES-gradient-flow}

Using $c=K_\tau u$, the first equation of \eqref{eq:PES-intro} becomes
\begin{equation}\label{eq:PES-eliminated}
        \partial_t u=\Delta u-\chi\Div(u\nabla K_\tau u)
        =\Div\!\bigl[u\,\nabla(\log u-\chi K_\tau u)\bigr],
\end{equation}
where the second equality uses $\Delta u=\Div(u\nabla\log u)$ for positive smooth $u$. The natural free energy associated with \eqref{eq:PES-eliminated} is
\begin{equation}\label{eq:PES-free-energy}
        \Free_{\mathrm{PES}}[u]
        =\int_\Omega u\log u\,dx
        -\frac{\chi}{2}\int_\Omega u\,K_\tau u\,dx.
\end{equation}
Self-adjointness of $K_\tau$ is essential: for any test function $\phi$,
\begin{equation}\label{eq:variation-Ktau}
        \frac{d}{d\varepsilon}\bigg|_{\varepsilon=0}\frac{1}{2}\int_\Omega(u+\varepsilon\phi)K_\tau(u+\varepsilon\phi)\,dx
        =\int_\Omega\phi\,K_\tau u\,dx,
\end{equation}
so that the $L^{2}$-gradient of $\Free_{\mathrm{PES}}$ is
\begin{equation}\label{eq:PES-first-variation}
        \frac{\delta\Free_{\mathrm{PES}}}{\delta u}(u)=\log u+1-\chi K_\tau u,
\end{equation}
and equation \eqref{eq:PES-eliminated} is the formal $L^{2}$-gradient flow $\partial_t u=\Div(u\nabla\tfrac{\delta\Free_{\mathrm{PES}}}{\delta u})$ in the Wasserstein sense; cf.~\cite{jordan1998variational, ambrosio2005gradient} for the abstract setting.

\begin{proposition}[Entropy dissipation]\label{prop:PES-dissipation}
Let $u\in C([0,T];L^{1}\cap L^{\infty}(\Omega))\cap C^{1}((0,T);L^{1}(\Omega))$ be a positive, sufficiently smooth solution of \eqref{eq:PES-eliminated} satisfying one of the following: (a) periodic boundary conditions; (b) Schwartz decay at infinity on $\R^{N}$; (c) the no-flux boundary condition
\begin{equation}\label{eq:no-flux}
        \bigl(\nabla u-\chi u\,\nabla K_\tau u\bigr)\cdot\nu=0
        \qquad\text{on }\partial\Omega.
\end{equation}
Then
\begin{equation}\label{eq:PES-dissipation}
        \frac{d}{dt}\Free_{\mathrm{PES}}[u(t)]
        =-\int_\Omega u\,\bigl|\nabla(\log u-\chi K_\tau u)\bigr|^{2}\,dx\le 0,
\end{equation}
and in integrated form,
\begin{equation}\label{eq:PES-integrated-dissipation}
        \Free_{\mathrm{PES}}[u(T)]+\int_0^T\!\!\int_\Omega u\,\bigl|\nabla(\log u-\chi K_\tau u)\bigr|^{2}\,dx\,dt
        =\Free_{\mathrm{PES}}[u_0].
\end{equation}
In addition, $\int_\Omega u(\cdot,t)\,dx$ is conserved.
\end{proposition}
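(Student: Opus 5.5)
The plan is the standard entropy computation: differentiate the two parts of $\Free_{\mathrm{PES}}$ in time, combine them through the first variation \eqref{eq:PES-first-variation}, and integrate by parts once against the divergence form \eqref{eq:PES-eliminated}. Throughout write $\mu:=\log u-\chi K_\tau u$, so that \eqref{eq:PES-eliminated} reads $\partial_t u=\Div(u\nabla\mu)$. The flux is $J:=u\nabla\mu=\nabla u-\chi u\nabla K_\tau u$, which is exactly the quantity whose normal component vanishes in case (c).

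\textbf{Mass conservation.} Integrating the equation over $\Omega$ gives $\frac{d}{dt}\int_\Omega u\,dx=\int_\Omega\Div J\,dx$. This vanishes by the divergence theorem: in case (c) because of \eqref{eq:no-flux}; in case (a) by periodicity; in case (b) because $J$ decays rapidly at infinity. For (b), note that $K_\tau u$ and its gradient are bounded by Lemma~\ref{lem:Ktau-smoothing} together with the decay of the kernel.

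\textbf{Time derivative of the entropy.} Differentiating under the integral sign, which the regularity hypothesis $u\in C^1((0,T);L^1)\cap L^\infty$ with $u>0$ permits, gives
\[
\frac{d}{dt}\int u\log u=\int(\log u+1)\,\partial_t u .
\]

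\textbf{Time derivative of the interaction term.} Since $K_\tau$ is time-independent and self-adjoint (Lemma~\ref{lem:Ktau-smoothing}), the same computation as in \eqref{eq:variation-Ktau} yields
\[
\frac{d}{dt}\tfrac12\int u\,K_\tau u=\int \partial_t u\, K_\tau u .
\]

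\textbf{Combining.} Putting the two derivatives together,
\[
\frac{d}{dt}\Free_{\mathrm{PES}}[u]=\int(\mu+1)\,\Div J .
\]
The constant $1$ contributes nothing by mass conservation. For the remaining term, integrate by parts:
\[
\int\mu\,\Div J=-\int\nabla\mu\cdot J+\int_{\partial\Omega}\mu\,J\cdot\nu .
\]
The boundary term vanishes under (a), (b) or (c), and $-\nabla\mu\cdot J=-u|\nabla\mu|^2$. This proves \eqref{eq:PES-dissipation}. Integrating in time from $0$ to $T$, using continuity of $\Free_{\mathrm{PES}}$ at the endpoints (which follows from $u\in C([0,T];L^1\cap L^\infty)$), gives \eqref{eq:PES-integrated-dissipation}.

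\textbf{Main obstacle.} The delicate step is justifying the integrations by parts and the differentiation of $\int u\log u$, particularly on $\R^N$, where $\log u\to-\infty$ at infinity. The Schwartz decay in (b) must be used to show that $u\log u$, $u\,\partial_t\log u$ and $u|\nabla\log u|$ are integrable, and that the boundary terms on large spheres tend to zero. The other cases only need smoothness up to the boundary and positivity, which are assumed. I would handle case (b) by cutting off with $\phi_R(x)=\phi(x/R)$ and letting $R\to\infty$, invoking dominated convergence.
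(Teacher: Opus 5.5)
Your proposal is correct and follows the paper's argument. You apply the chain rule and the self-adjointness of $K_\tau$ to get $\int(\mu+1)\,\partial_t u$, drop the constant by mass conservation, integrate by parts once with the boundary term killed by (a), (b) or (c), and then integrate in time; your additional remarks on the cutoff argument in case (b) and on continuity of $\Free_{\mathrm{PES}}$ at the endpoints only make explicit justifications that the paper absorbs into ``sufficiently smooth.''
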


\begin{proof}
By \eqref{eq:PES-first-variation},
\[
\begin{aligned}
        \frac{d}{dt}\Free_{\mathrm{PES}}[u(t)]
        &=\int_\Omega(\log u-\chi K_\tau u)\,\partial_t u\,dx\\
        &=\int_\Omega(\log u-\chi K_\tau u)\,\Div\bigl[u\nabla(\log u-\chi K_\tau u)\bigr]\,dx,
\end{aligned}
\]
the additive constant from \eqref{eq:PES-first-variation} contributing nothing because $\partial_t\int u\,dx=0$ (which follows from \eqref{eq:no-flux} or the analogous decay/periodicity hypothesis). Integration by parts gives \eqref{eq:PES-dissipation} with vanishing boundary term. Integration in time gives \eqref{eq:PES-integrated-dissipation}. Mass conservation is immediate from $\partial_t u=\Div(\cdots)$ together with the boundary/decay hypothesis.
\end{proof}

\subsection{Entropy and interaction scaling of concentrating sequences}
\label{subsec:concentration-scaling}

Throughout this subsection, $N=4$. Fix an interior point $x_0\in\Omega$ with $\dist(x_0,\partial\Omega)>0$ (or $x_0\in\R^{4}$, resp.\ $x_0\in\T^{4}$). Fix $U\in C_c^{\infty}(B_{1}(0))$ with $U\ge 0$ and $\int_{\R^{4}}U(z)\,dz=M$, and define
\begin{equation}\label{eq:u-eps}
        u_\varepsilon(x):=\varepsilon^{-4}U\!\left(\frac{x-x_0}{\varepsilon}\right),
        \qquad 0<\varepsilon\ll 1.
\end{equation}
Then $\int_\Omega u_\varepsilon\,dx=M$ and $u_\varepsilon\,dx\rightharpoonup M\delta_{x_0}$ weakly-$*$ as measures.

\begin{lemma}[Entropy scaling]\label{lem:entropy-scaling}
For $u_\varepsilon$ as in \eqref{eq:u-eps},
\begin{equation}\label{eq:entropy-scaling}
        \int_\Omega u_\varepsilon\log u_\varepsilon\,dx
        =4M\log\frac{1}{\varepsilon}+\int_{\R^{4}}U\log U\,dz
        =4M\log\frac{1}{\varepsilon}+O(1).
\end{equation}
\end{lemma}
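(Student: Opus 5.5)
The identity follows from a direct change of variables $z=(x-x_0)/\varepsilon$. First we check that the support of $u_\varepsilon$ lies inside $\Omega$. Since $U\in C_c^{\infty}(B_1(0))$, the function $u_\varepsilon$ is supported in $B_\varepsilon(x_0)$. For $\varepsilon<\dist(x_0,\partial\Omega)$ this ball is contained in $\Omega$, so the integral over $\Omega$ equals the integral over $\R^{4}$. On $\T^{4}$ the same holds once $\varepsilon$ is smaller than half the period, and on $\R^{4}$ there is nothing to check. We use the convention $0\log 0=0$, so the set where $U=0$ contributes nothing.

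On the support we take logarithms:
\[
        \log u_\varepsilon(x)=4\log\tfrac{1}{\varepsilon}+\log U(z),\qquad z=\tfrac{x-x_0}{\varepsilon}.
\]
Substituting $dx=\varepsilon^{4}dz$, the factor $\varepsilon^{-4}$ in $u_\varepsilon$ cancels the Jacobian. This gives
\[
        \int_\Omega u_\varepsilon\log u_\varepsilon\,dx
        =\int_{\R^{4}}U(z)\Bigl(4\log\tfrac{1}{\varepsilon}+\log U(z)\Bigr)\,dz
        =4M\log\tfrac{1}{\varepsilon}+\int_{\R^{4}}U\log U\,dz,
\]
where we used $\int U\,dz=M$.

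It remains to show that $\int U\log U\,dz$ is a finite constant independent of $\varepsilon$. The function $s\mapsto s\log s$ is continuous on $[0,\infty)$, so $U\log U$ is bounded by $\max_{0\le s\le\|U\|_\infty}|s\log s|$ and is supported in $B_1$. The integral is therefore finite, and it is the $O(1)$ term in \eqref{eq:entropy-scaling}. The only point needing care is the treatment of $\{U=0\}$ and of $\log U\to-\infty$ near the boundary of the support, and continuity of $s\log s$ at $0$ settles both.
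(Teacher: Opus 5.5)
Your proposal is correct and follows essentially the same route as the paper: the substitution $x=x_0+\varepsilon z$, cancellation of $\varepsilon^{-4}$ against the Jacobian, and splitting $\log u_\varepsilon=4\log\varepsilon^{-1}+\log U$. Your extra checks are sound and left implicit in the paper's proof: that the support $B_\varepsilon(x_0)$ lies in $\Omega$ for small $\varepsilon$, the convention $0\log 0=0$, and finiteness of $\int U\log U$ via continuity of $s\log s$ on $[0,\infty)$.
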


\begin{proof}
Substitute $x=x_0+\varepsilon z$, $dx=\varepsilon^{4}dz$. Then $u_\varepsilon(x)=\varepsilon^{-4}U(z)$ and
\[
        \int_\Omega u_\varepsilon\log u_\varepsilon\,dx
        =\int_{\R^{4}}U(z)\bigl(4\log\varepsilon^{-1}+\log U(z)\bigr)\,dz
        =4M\log\varepsilon^{-1}+\int_{\R^{4}}U\log U\,dz.\qedhere
\]
\end{proof}

\begin{lemma}[Interaction scaling]\label{lem:interaction-scaling}
For $u_\varepsilon$ as in \eqref{eq:u-eps},
\begin{equation}\label{eq:interaction-scaling}
        \int_\Omega u_\varepsilon\,K_\tau u_\varepsilon\,dx
        =\frac{M^{2}}{8\pi^{2}\tau}\log\frac{1}{\varepsilon}+O(1).
\end{equation}
\end{lemma}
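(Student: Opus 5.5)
We write the interaction as a double integral against the kernel from Proposition~\ref{prop:log-kernel-R4}:
\[
\int_\Omega u_\varepsilon K_\tau u_\varepsilon\,dx=\iint u_\varepsilon(x)G_\tau(x,y)u_\varepsilon(y)\,dx\,dy .
\]
On $\R^{4}$ or $\T^{4}$, $G_\tau(x,y)=G_\tau(x-y)$. On a Neumann domain, $\operatorname{supp}u_\varepsilon\subset \overline{B_\varepsilon(x_0)}$. For $\varepsilon<\tfrac12\dist(x_0,\partial\Omega)$ this support lies in the fixed compact set $\mathcal K:=\overline{B_{d/2}(x_0)}\Subset\Omega$, where $d=\dist(x_0,\partial\Omega)$. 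So in all three settings we may insert
\[
G_\tau(x,y)=\frac{1}{8\pi^{2}\tau}\log\frac{1}{|x-y|}+R_\tau(x,y),
\]
with $R_\tau$ bounded on $\mathcal K\times\mathcal K$. On $\R^4$ and $\T^4$, $R_\tau$ is bounded near the diagonal, and $|x-y|\le 2\varepsilon$ there.

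\emph{Step 1: the remainder.} Since $u_\varepsilon\ge0$ and $\int u_\varepsilon=M$,
\[
\Bigl|\iint u_\varepsilon(x)R_\tau(x,y)u_\varepsilon(y)\,dx\,dy\Bigr|\le \|R_\tau\|_{L^\infty(\mathcal K\times\mathcal K)}M^{2}=O(1),
\]
uniformly in $\varepsilon$.

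\emph{Step 2: the logarithmic part.} Substitute $x=x_0+\varepsilon z$ and $y=x_0+\varepsilon z'$. The Jacobians $\varepsilon^{8}$ cancel the prefactors $\varepsilon^{-8}$, and $\log\frac{1}{|x-y|}=\log\frac1\varepsilon+\log\frac{1}{|z-z'|}$. Hence
\[
\iint u_\varepsilon(x)\log\tfrac{1}{|x-y|}u_\varepsilon(y)\,dx\,dy=M^{2}\log\tfrac1\varepsilon+\iint_{B_1\times B_1}U(z)\log\tfrac{1}{|z-z'|}U(z')\,dz\,dz'.
\]

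\emph{Step 3: finiteness of the last term.} This is the only point needing an actual argument, and it is mild. Since $U\in L^\infty$ with support in $B_1$, the function $\log|\cdot|$ is integrable on $B_2\subset\R^4$, and $|z-z'|\le 2$ there, the integral is bounded in absolute value by
\[
\|U\|_\infty M\int_{B_2}\bigl|\log|w|\bigr|\,dw<\infty .
\]
It is therefore an $\varepsilon$-independent constant.

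Multiplying Step 2 by $(8\pi^2\tau)^{-1}$ and adding Step 1 gives \eqref{eq:interaction-scaling}.

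The main obstacle is justifying the kernel representation in the Neumann case, i.e.\ that $G_\tau(x,y)$ is a genuine integral kernel with the stated interior expansion uniformly on $\mathcal K\times\mathcal K$. This is taken directly from Proposition~\ref{prop:log-kernel-R4}, together with Remark~\ref{rem:boundary-low-regularity} for $C^2$ domains. Localising the support inside a fixed $\mathcal K$ for small $\varepsilon$ is what lets us use only the interior expansion.
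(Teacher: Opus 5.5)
Your proposal is correct and follows essentially the same route as the paper. Both arguments localise the support of $u_\varepsilon$ in a fixed interior compact set, bound the $R_\tau$ contribution by $\|R_\tau\|_{L^\infty(\mathcal K\times\mathcal K)}M^{2}$, and rescale $x=x_0+\varepsilon z$, $y=x_0+\varepsilon z'$ to extract $M^{2}\log\varepsilon^{-1}$ plus an $\varepsilon$-independent constant. Your explicit bound $\|U\|_{\infty}M\int_{B_2}\bigl|\log|w|\bigr|\,dw$ simply makes precise the finiteness that the paper asserts from the smoothness and compact support of $U$.
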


\begin{proof}
By Proposition~\ref{prop:log-kernel-R4}, for $x,y$ in the compact support $\mathcal K\Subset\Omega$ of $u_\varepsilon$ (uniformly in $\varepsilon$ small),
\[
        G_\tau(x,y)=\frac{1}{8\pi^{2}\tau}\log\frac{1}{|x-y|}+R_\tau(x,y),
        \qquad
        R_\tau\in L^{\infty}(\mathcal K\times\mathcal K).
\]
Hence
\[
\begin{aligned}
        \int_\Omega u_\varepsilon K_\tau u_\varepsilon\,dx
        &=\frac{1}{8\pi^{2}\tau}\iint u_\varepsilon(x)u_\varepsilon(y)\log\frac{1}{|x-y|}\,dx\,dy
        +\iint u_\varepsilon(x)R_\tau(x,y)u_\varepsilon(y)\,dx\,dy.
\end{aligned}
\]
The remainder integral is bounded by $\|R_\tau\|_{L^{\infty}(\mathcal K\times\mathcal K)}M^{2}=O(1)$. For the logarithmic part, substitute $x=x_0+\varepsilon z$, $y=x_0+\varepsilon\zeta$:
\[
\begin{aligned}
        &\iint u_\varepsilon(x)u_\varepsilon(y)\log\frac{1}{|x-y|}\,dx\,dy\\
        &\qquad=\iint_{\R^{4}\times\R^{4}}U(z)U(\zeta)\Bigl(\log\varepsilon^{-1}+\log\frac{1}{|z-\zeta|}\Bigr)\,dz\,d\zeta\\
        &\qquad=M^{2}\log\varepsilon^{-1}+\iint U(z)U(\zeta)\log\frac{1}{|z-\zeta|}\,dz\,d\zeta.
\end{aligned}
\]
The last integral is finite since $U$ is smooth and compactly supported, so \eqref{eq:interaction-scaling} follows.
\end{proof}

\begin{proposition}[Concentration-scaling candidate mass]\label{prop:candidate-mass}
Let $N=4$ and let $u_\varepsilon$ be as in \eqref{eq:u-eps}. Then
\begin{equation}\label{eq:F-concentration}
        \Free_{\mathrm{PES}}[u_\varepsilon]
        =\left(4M-\frac{\chi M^{2}}{16\pi^{2}\tau}\right)\log\frac{1}{\varepsilon}+O(1).
\end{equation}
The leading coefficient vanishes precisely at
\begin{equation}\label{eq:M-star-final}
        M_{\!*}=\frac{64\pi^{2}\tau}{\chi}.
\end{equation}
Consequently:
\begin{enumerate}[label=(\roman*),leftmargin=2em]
\item if $M<M_{\!*}$, then $\Free_{\mathrm{PES}}[u_\varepsilon]\to +\infty$ as $\varepsilon\downarrow 0$;
\item if $M=M_{\!*}$, then $\Free_{\mathrm{PES}}[u_\varepsilon]=O(1)$ as $\varepsilon\downarrow 0$;
\item if $M>M_{\!*}$, then $\Free_{\mathrm{PES}}[u_\varepsilon]\to -\infty$ as $\varepsilon\downarrow 0$. (see the variational descent illustrated in Figure~\ref{fig:free_energy_landscape}).
\end{enumerate}
\end{proposition}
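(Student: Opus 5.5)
The proof combines the two scaling lemmas already established. By definition \eqref{eq:PES-free-energy},
\[
\Free_{\mathrm{PES}}[u_\varepsilon]=\int_\Omega u_\varepsilon\log u_\varepsilon\,dx-\frac{\chi}{2}\int_\Omega u_\varepsilon K_\tau u_\varepsilon\,dx .
\]
Lemma~\ref{lem:entropy-scaling} gives the first term as $4M\log\varepsilon^{-1}+\int U\log U\,dz$. Lemma~\ref{lem:interaction-scaling} gives the interaction integral as $\frac{M^{2}}{8\pi^{2}\tau}\log\varepsilon^{-1}+O(1)$. Multiplying the latter by $-\chi/2$ yields $-\frac{\chi M^{2}}{16\pi^{2}\tau}\log\varepsilon^{-1}+O(1)$, and adding the two gives \eqref{eq:F-concentration}.

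The only care needed is that every $O(1)$ remainder is independent of $\varepsilon$. These remainders are:
\begin{itemize}
\item $\int U\log U$, which is finite since $U$ is bounded and compactly supported, using $s\log s\ge -e^{-1}$;
\item the self-interaction $\iint U(z)U(\zeta)\log|z-\zeta|^{-1}\,dz\,d\zeta$, which is finite since the logarithm is locally integrable in $\R^{4}$;
\item the term $\|R_\tau\|_{L^\infty(\mathcal K\times\mathcal K)}M^{2}$, which is uniform because for $\varepsilon<\varepsilon_0$ all supports lie in the fixed compact set $\mathcal K=\overline{B_{\varepsilon_0}(x_0)}\Subset\Omega$.
\end{itemize}
Here $\varepsilon_0$ is chosen so that $\overline{B_{\varepsilon_0}(x_0)}$ lies inside $\Omega$ and inside the neighbourhood where Proposition~\ref{prop:log-kernel-R4} applies. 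On $\R^{4}$ and $\T^{4}$ this is immediate. This uniformity check is the only point needing attention.

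Next, the leading coefficient factors as $M\bigl(4-\frac{\chi M}{16\pi^{2}\tau}\bigr)$. Since $M>0$, it vanishes exactly when $M=64\pi^{2}\tau/\chi=M_{\!*}$. Its sign is positive for $M<M_{\!*}$ and negative for $M>M_{\!*}$. Because $\log\varepsilon^{-1}\to+\infty$ as $\varepsilon\downarrow0$ and the remainder stays bounded, items (i) and (iii) follow. At $M=M_{\!*}$ the logarithmic term drops out and only the bounded remainder is left, which is item (ii).
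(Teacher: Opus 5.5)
Your proposal is correct and follows the paper's proof: you substitute Lemmas~\ref{lem:entropy-scaling} and~\ref{lem:interaction-scaling} into $\Free_{\mathrm{PES}}$, factor the leading coefficient as $M\bigl(4-\chi M/(16\pi^{2}\tau)\bigr)$, and read off the three cases from its sign. Your explicit check that the $O(1)$ remainders are uniform in $\varepsilon$ is already contained in the proofs of those two lemmas, but spelling it out does no harm.
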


\begin{proof}
Substitute \eqref{eq:entropy-scaling} and \eqref{eq:interaction-scaling} into \eqref{eq:PES-free-energy}:
\[
\begin{aligned}
        \Free_{\mathrm{PES}}[u_\varepsilon]
        &=4M\log\varepsilon^{-1}-\frac{\chi}{2}\cdot\frac{M^{2}}{8\pi^{2}\tau}\log\varepsilon^{-1}+O(1)\\
        &=\Bigl(4M-\frac{\chi M^{2}}{16\pi^{2}\tau}\Bigr)\log\varepsilon^{-1}+O(1).
\end{aligned}
\]
The leading coefficient $4M-\chi M^{2}/(16\pi^{2}\tau)=M\bigl(4-\chi M/(16\pi^{2}\tau)\bigr)$ vanishes at $M=64\pi^{2}\tau/\chi$, and the three cases follow from the sign of the coefficient.
\end{proof}

\begin{corollary}[Variational descent above the candidate mass]\label{cor:descent-above-M-star}
If $M>64\pi^{2}\tau/\chi$, then there exist $u_\varepsilon$ concentrating at any interior point $x_0$ such that $u_\varepsilon\,dx\rightharpoonup M\delta_{x_0}$ and $\Free_{\mathrm{PES}}[u_\varepsilon]\to -\infty$ as $\varepsilon\downarrow 0$.\qed
\end{corollary}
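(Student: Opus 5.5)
This is a direct consequence of Proposition~\ref{prop:candidate-mass}, item (iii). Fix $M>M_{\!*}=64\pi^{2}\tau/\chi$ and an interior point $x_0$ with $\dist(x_0,\partial\Omega)>0$ (or any $x_0\in\R^{4}$, resp.\ $\T^{4}$). Choose a profile $U\in C_c^{\infty}(B_1(0))$ with $U\ge 0$ and $\int U=M$; for instance, take a rescaled standard mollifier. Then define $u_\varepsilon$ by \eqref{eq:u-eps}.

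First, we check that this family is admissible and concentrates. For $\varepsilon<\dist(x_0,\partial\Omega)$ the support of $u_\varepsilon$ lies in $\overline{B_\varepsilon(x_0)}\Subset\Omega$, and the change of variables $x=x_0+\varepsilon z$ gives $\int_\Omega u_\varepsilon\,dx=M$. For $\phi\in C_b(\Omega)$ the same substitution gives
\[
\int_\Omega u_\varepsilon\phi\,dx=\int_{\R^4} U(z)\,\phi(x_0+\varepsilon z)\,dz .
\]
By continuity of $\phi$ at $x_0$ and dominated convergence, the right-hand side tends to $M\phi(x_0)$. Hence $u_\varepsilon\,dx\rightharpoonup M\delta_{x_0}$.

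Second, we invoke \eqref{eq:F-concentration}:
\[
\Free_{\mathrm{PES}}[u_\varepsilon]=M\Bigl(4-\frac{\chi M}{16\pi^2\tau}\Bigr)\log\frac1\varepsilon+O(1).
\]
The bracket is strictly negative exactly when $M>64\pi^2\tau/\chi$. Since $\log(1/\varepsilon)\to+\infty$, it follows that $\Free_{\mathrm{PES}}[u_\varepsilon]\to-\infty$.

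The only point needing care is that the $O(1)$ term really is uniform in $\varepsilon$. This rests on Lemma~\ref{lem:interaction-scaling}, which requires the supports to stay inside a fixed compact set $\mathcal K\Subset\Omega$. That holds once we take $\varepsilon\le\varepsilon_0<\dist(x_0,\partial\Omega)$ and set $\mathcal K=\overline{B_{\varepsilon_0}(x_0)}$.
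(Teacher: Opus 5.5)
Your proposal is correct and follows the paper's own route. The paper states the corollary without a separate proof, as an immediate consequence of Proposition~\ref{prop:candidate-mass}(iii) applied to the family \eqref{eq:u-eps}, and records the weak-$*$ convergence $u_\varepsilon\,dx\rightharpoonup M\delta_{x_0}$ directly after \eqref{eq:u-eps}. Your extra check that the $O(1)$ term is uniform in $\varepsilon$, using the fixed compact set $\mathcal K=\overline{B_{\varepsilon_0}(x_0)}$, just spells out what Lemma~\ref{lem:interaction-scaling} already assumes.
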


\begin{remark}[The threshold value is variational, not dynamical]\label{rem:not-dynamical}
Corollary~\ref{cor:descent-above-M-star} produces sequences along which the free energy is unbounded below. By itself this does \emph{not} imply finite-time blow-up of the PES flow: the entropy-dissipation identity \eqref{eq:PES-dissipation} only forces $t\mapsto\Free_{\mathrm{PES}}[u(t)]$ to be non-increasing. Promoting variational descent to dynamical concentration requires further dynamical tools (virial identity, monotonicity formula, or a sharp $K_\tau$-adapted compactness theory). We formulate the resulting open problem in Section~\ref{sec:open}.
\end{remark}

\begin{figure}[!h]
    \centering
    \includegraphics[width=0.85\textwidth]{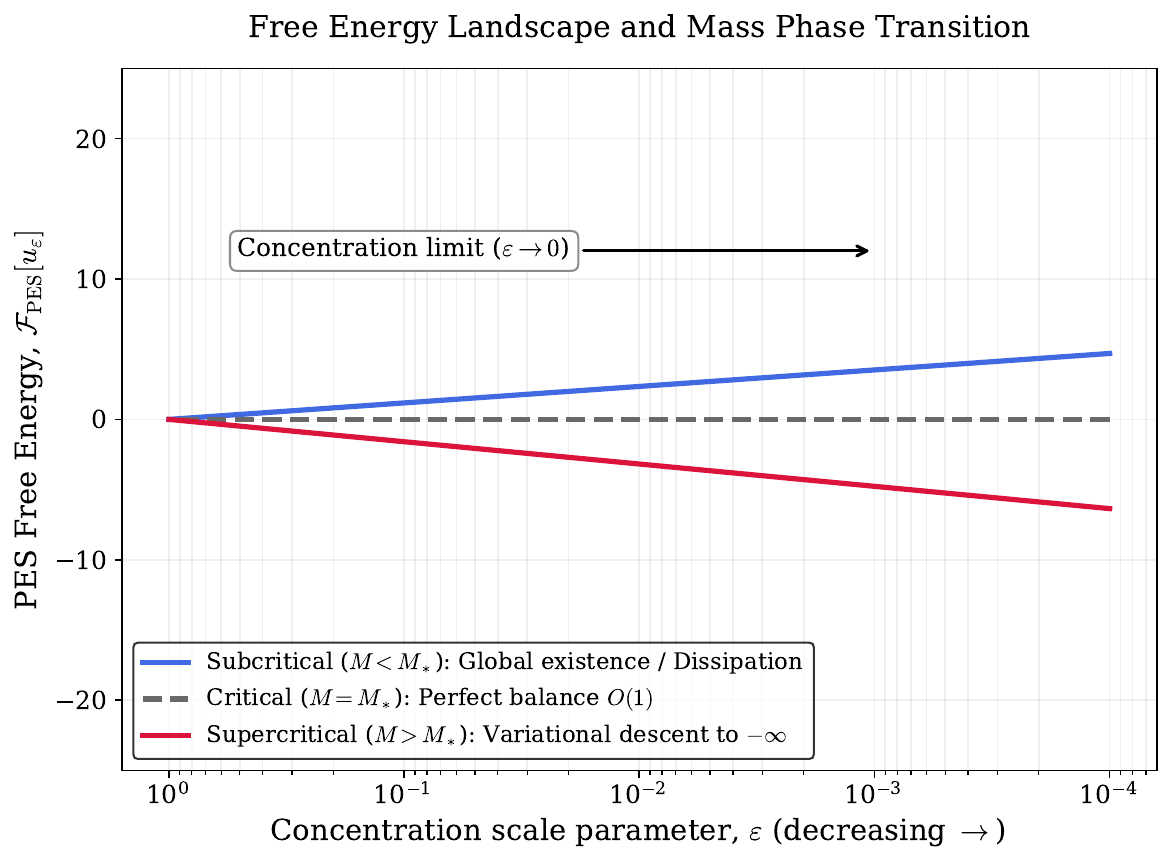}
    \caption{Free energy landscape and mass phase transition for the PES cascade. The scaling of the free energy $\mathcal{F}_{\mathrm{PES}}[u_\varepsilon]$ is plotted against the concentration parameter $\varepsilon$. Subcritical masses ($M < M_*$) face an infinite energy barrier as $\varepsilon \to 0$, ensuring dissipation and preventing finite-time condensation. Supercritical masses ($M > M_*$) exhibit an unbounded variational descent to $-\infty$, indicating a catastrophic loss of compactness. The threshold $M_* = 64\pi^2\tau/\chi$ marks the exact balance between random diffusion and fourth-order spatial aggregation.}
    \label{fig:free_energy_landscape}
\end{figure}

\section{Volterra-memory structure of MEP}
\label{sec:MEP}

We turn to the mixed elliptic--parabolic cascade \eqref{eq:MEP-intro}. The intermediate elliptic equation $0=\Delta w-w+u$ gives $w=A^{-1}u$ at each time, so that the signal equation reduces to
\begin{equation}\label{eq:MEP-c-equation}
        \partial_t c+Ac=A^{-1}u.
\end{equation}

\subsection{Duhamel representation and Volterra drift}
\label{subsec:Duhamel}

Let $\{e^{-tA}\}_{t\ge 0}$ denote the analytic semigroup generated by $-A$.

\begin{proposition}[Volterra representation]\label{prop:MEP-Volterra}
Let $(u,c)$ be sufficiently smooth so that all of the operations below are justified. Then
\begin{equation}\label{eq:MEP-Duhamel}
        c(t)=e^{-tA}c_0+\int_0^t e^{-(t-s)A}A^{-1}u(s)\,ds,
\end{equation}
and consequently
\begin{equation}\label{eq:nabla-c-Volterra}
        \nabla c(t)=\nabla e^{-tA}c_0+\Vop u(t),
        \quad
        \Vop u(t):=\int_0^t \nabla e^{-(t-s)A}A^{-1}u(s)\,ds.
\end{equation}
\end{proposition}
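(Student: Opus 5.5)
The plan is to treat \eqref{eq:MEP-c-equation} as an abstract inhomogeneous Cauchy problem $\partial_t c+Ac=f$ with forcing $f(t):=A^{-1}u(t)$, and to apply the variation-of-constants formula for the analytic semigroup $e^{-tA}$.

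First, I record the functional setting. The operator $A=I-\Delta$ (on $\R^{N}$, $\T^{N}$, or with Neumann conditions on $\Omega$) is self-adjoint and satisfies $A\ge I$ on $L^{2}$. Hence $-A$ generates a contraction analytic semigroup, and $A^{-1}$ is bounded, with $\|A^{-1}\|\le 1$. By the smoothness assumption, $u\in C([0,T];L^{2})$, so $f=A^{-1}u\in C([0,T];\Hil_A^{2})$.

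Second, I derive \eqref{eq:MEP-Duhamel} by the standard differentiation argument. Fix $t$ and set $g(s):=e^{-(t-s)A}c(s)$ for $0\le s\le t$. For a strong solution $c\in C^{1}((0,t];L^{2})\cap C([0,t];\Hil_A^{2})$, the product rule for semigroups gives
\[
g'(s)=e^{-(t-s)A}\bigl(Ac(s)+\partial_s c(s)\bigr)=e^{-(t-s)A}A^{-1}u(s).
\]
Integrating from $0$ to $t$ yields $c(t)-e^{-tA}c_0=\int_0^t e^{-(t-s)A}A^{-1}u(s)\,ds$. Conversely, this formula defines the unique mild solution, so the representation is also consistent with uniqueness.

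On $\R^{N}$ and $\T^{N}$, I would alternatively check the identity mode by mode. Taking the Fourier transform turns the equation into a scalar linear ODE, $\partial_t\widehat c+(1+|\xi|^{2})\widehat c=\widehat u/(1+|\xi|^{2})$, whose solution is explicit. On $\Omega$, the same check works in the Neumann eigenbasis $\{\varphi_j\}$, giving a scalar ODE for each coefficient.

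Third, I apply $\nabla$ to \eqref{eq:MEP-Duhamel}. The only point needing care, and the one I expect to be the main (mild) obstacle, is interchanging $\nabla$ with the Bochner integral. This is justified because $s\mapsto e^{-(t-s)A}A^{-1}u(s)$ is continuous into $\Hil_A^{2}\subset H^{1}$, and $\nabla:H^{1}\to L^{2}$ is bounded. A closed operator commutes with Bochner integrals whenever the integrand and its image are integrable, which holds here.

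In fact the integrand lies in $H^{1}$ uniformly, even as $t-s\downarrow 0$, because of the factor $A^{-1}$: there is no singularity at the time diagonal, consistent with the order $-1$ limit in \eqref{eq:MEP-near-diagonal-intro}. This gives \eqref{eq:nabla-c-Volterra}, with $\nabla e^{-tA}c_0$ well defined for $c_0\in H^{1}$ (or for $t>0$ by analyticity).
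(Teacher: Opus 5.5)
Your proposal is correct and uses the same integrating-factor (variation-of-constants) route as the paper: the paper applies $e^{tA}$ to $\partial_t c+Ac=A^{-1}u$, integrates over $[0,t]$, multiplies by $e^{-tA}$, and then differentiates in $x$. Your $g(s)=e^{-(t-s)A}c(s)$ is the rigorous form of that step, since it avoids the unbounded backward operator $e^{tA}$, and your argument for commuting $\nabla$ with the integral (continuity of the integrand into $\Hil_A^{2}\subset H^{1}$) supplies what the paper leaves to its ``sufficiently smooth'' hypothesis.
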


\begin{proof}
Apply $e^{tA}$ to \eqref{eq:MEP-c-equation} and integrate from $0$ to $t$:
\[
        e^{tA}c(t)-c_0=\int_0^t e^{sA}A^{-1}u(s)\,ds.
\]
Multiplication by $e^{-tA}$ gives \eqref{eq:MEP-Duhamel}. Differentiation in $x$ yields \eqref{eq:nabla-c-Volterra}.
\end{proof}

\subsection{Drift multiplier: near-diagonal versus time-averaged orders}
\label{subsec:multiplier}

On $\R^{N}$, $\widehat{e^{-\theta A}f}(\xi)=e^{-\theta(1+|\xi|^{2})}\widehat f(\xi)$ and $\widehat{A^{-1}f}(\xi)=(1+|\xi|^{2})^{-1}\widehat f(\xi)$, so the memory drift multiplier at time lag $\theta=t-s>0$ is
\begin{equation}\label{eq:m-xi-theta}
        m(\xi,\theta)=\frac{i\xi\,e^{-\theta(1+|\xi|^{2})}}{1+|\xi|^{2}}.
\end{equation}

\begin{proposition}[Near-diagonal Keller--Segel order]\label{prop:MEP-near-diagonal}
For every fixed $\theta>0$, $|m(\xi,\theta)|\le |\xi|(1+|\xi|^{2})^{-1}e^{-\theta|\xi|^{2}}$ decays faster than any polynomial as $|\xi|\to\infty$, so $\mathcal{F}^{-1}m(\cdot,\theta)$ is a smoothing operator of infinite order. However, the family $\{m(\cdot,\theta)\}_{\theta>0}$ is \emph{not} uniformly smoothing as $\theta\downarrow 0$: 
\begin{equation}\label{eq:m-near-diagonal}
        m(\xi,\theta)\longrightarrow m_0(\xi):=\frac{i\xi}{1+|\xi|^{2}}
        \qquad\text{locally uniformly in }\xi\text{ as }\theta\downarrow 0,
\end{equation}
and the limiting multiplier $m_0$ has principal order $-1$, equal to that of the classical Keller--Segel drift $\nabla(I-\Delta)^{-1}$. Thus the symbol-class order of the Volterra-drift kernel at the time diagonal is Keller--Segel-like, even though no individual time-slice operator $\mathcal{F}^{-1}m(\cdot,\theta)$ (with $\theta>0$) has order $-1$.
\end{proposition}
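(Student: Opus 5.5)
The proof is a direct computation with the explicit multiplier \eqref{eq:m-xi-theta}. I will establish the three assertions of the statement in order: infinite-order smoothing at fixed lag, locally uniform convergence to $m_0$, and the order of $m_0$ together with the failure of uniform smoothing.

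\emph{Step 1: fixed-lag smoothing.} Since $|i\xi|=|\xi|$ and $e^{-\theta(1+|\xi|^{2})}=e^{-\theta}e^{-\theta|\xi|^{2}}\le e^{-\theta|\xi|^{2}}$, the pointwise bound $|m(\xi,\theta)|\le|\xi|(1+|\xi|^{2})^{-1}e^{-\theta|\xi|^{2}}$ is immediate. For every $k\ge0$, $\sup_\xi(1+|\xi|^{2})^{k}|\xi|e^{-\theta|\xi|^{2}}<\infty$ for fixed $\theta>0$, by elementary calculus on $r^{2k+1}e^{-\theta r^{2}}$. Each derivative $\partial_\xi^\alpha m(\cdot,\theta)$ is a finite sum of terms of the form polynomial times rational function times $e^{-\theta|\xi|^{2}}$, so it obeys the same bound. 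Hence $m(\cdot,\theta)\in\mathcal S(\R^{N})$ and lies in $S^{-\infty}$. The operator $\mathcal F^{-1}m(\cdot,\theta)$ is then convolution with a Schwartz kernel and maps $H^{s}\to H^{s+k}$ for all $s,k$.

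\emph{Step 2: near-diagonal limit.} For $|\xi|\le R$, I will write $|m(\xi,\theta)-m_0(\xi)|=|\xi|(1+|\xi|^{2})^{-1}\bigl(1-e^{-\theta(1+|\xi|^{2})}\bigr)$. Using $1-e^{-a}\le a$, this is at most $\theta|\xi|\le\theta R\to0$, which gives \eqref{eq:m-near-diagonal}. The multiplier $m_0$ is smooth, and $|\xi|/(1+|\xi|^{2})\sim|\xi|^{-1}$, with derivative bounds of type $S^{-1}$ that are standard for the Bessel symbol $(1+|\xi|^{2})^{-1}$ multiplied by $i\xi$. By Convention~\ref{conv:order}, $m_0$ therefore has principal order $-1$, and it coincides with the symbol of $\nabla(I-\Delta)^{-1}=\nabla A^{-1}$.

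\emph{Step 3: non-uniformity.} This is the main point to make rigorous, since the statement asserts that the family is \emph{not} uniformly smoothing. I will show that for any $\epsilon>0$ there is no constant $C$ with $\sup_{0<\theta\le1}\sup_\xi|\xi|^{1+\epsilon}|m(\xi,\theta)|\le C$. To see this, evaluate along the curve $|\xi|=\theta^{-1/2}$: there $|\xi|^{1+\epsilon}|m|\ge c\,\theta^{-\epsilon/2}e^{-\theta-1}\to\infty$ as $\theta\downarrow0$. So no order better than $-1$ holds uniformly in $\theta$. Conversely, the uniform bound $|m(\xi,\theta)|\le|\xi|(1+|\xi|^{2})^{-1}$ shows that the family is bounded in $S^{-1}$ in the sup sense; the analogous derivative bounds follow because $\theta^{j}|\xi|^{2j}e^{-\theta|\xi|^{2}}$ is bounded. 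So $-1$ is exactly the uniform order at the diagonal. Combining Steps 1–3 yields the dichotomy stated in Proposition~\ref{prop:MEP-near-diagonal}.
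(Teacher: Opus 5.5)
Your proposal is correct and follows the same direct-computation route as the paper: the pointwise bound from $e^{-\theta(1+|\xi|^{2})}\le e^{-\theta|\xi|^{2}}$, the locally uniform limit as $\theta\downarrow 0$, and the $|\xi|^{-1}$ decay of $m_0$. Your Step~3 tests along $|\xi|=\theta^{-1/2}$ to get $\sup_{0<\theta\le 1}\sup_{\xi}|\xi|^{1+\epsilon}|m(\xi,\theta)|=\infty$, and pairs this with the uniform $S^{-1}$ bounds; this is a sharper, quantitative version of the non-uniformity that the paper's proof only asserts informally.
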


\begin{proof}
The first claim follows from $e^{-\theta(1+|\xi|^{2})}\le e^{-\theta|\xi|^{2}}$, and the limit \eqref{eq:m-near-diagonal} from $e^{-\theta(1+|\xi|^{2})}\to 1$ as $\theta\downarrow 0$, uniformly on compact $\xi$-sets. For $|\xi|\to\infty$, $|m_0(\xi)|=|\xi|/(1+|\xi|^{2})\sim|\xi|^{-1}$, so $m_0$ has principal order $-1$. The non-uniformity in $\theta$ follows because the symbol estimate $|m(\xi,\theta)|\le|\xi|^{-1}$ at fixed $\theta=0^+$ does not improve to higher decay even as $|\xi|\to\infty$.
\end{proof}

The following observation isolates the second competing feature of the MEP drift.

\begin{proposition}[Frozen-time average is fourth-order smoothing]\label{prop:frozen-average}
If $u(s)\equiv u$ is frozen in time, then
\begin{equation}\label{eq:frozen-average}
        \int_0^t \nabla e^{-(t-s)A}A^{-1}u\,ds
        =\nabla A^{-2}\bigl(I-e^{-tA}\bigr)u.
\end{equation}
The associated Fourier multiplier
\[
        M_t(\xi)=\frac{i\xi\bigl(1-e^{-t(1+|\xi|^{2})}\bigr)}{(1+|\xi|^{2})^{2}}
\]
satisfies $M_t(\xi)\sim i\xi/|\xi|^{4}$ as $|\xi|\to\infty$ for each fixed $t>0$, and therefore has principal order $-3$.
\end{proposition}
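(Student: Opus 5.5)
The plan is to compute directly on the Fourier side on $\R^{N}$, and spectrally on $\T^{N}$ or on a Neumann domain. With $u$ frozen, the integrand $\nabla e^{-(t-s)A}A^{-1}u$ depends on $s$ only through the lag $\theta=t-s$. Substituting $\theta=t-s$ turns the integral into $\int_0^t \nabla e^{-\theta A}A^{-1}u\,d\theta$. Since $u$ is now independent of the integration variable and $\nabla$ commutes with everything in sight, the task reduces to evaluating the operator-valued integral $\int_0^t e^{-\theta A}\,d\theta$.

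\textbf{Step 1: the semigroup integral.} Because $A=I-\Delta$ is positive with spectrum in $[1,\infty)$, it is invertible. The semigroup satisfies $\frac{d}{d\theta}e^{-\theta A}=-Ae^{-\theta A}$ strongly on the domain, so the fundamental theorem of calculus gives
\[
\int_0^t e^{-\theta A}\,d\theta=A^{-1}\bigl(I-e^{-tA}\bigr).
\]
On $\R^{N}$ this is simply the scalar identity $\int_0^t e^{-\theta(1+|\xi|^{2})}\,d\theta=(1-e^{-t(1+|\xi|^{2})})/(1+|\xi|^{2})$, applied pointwise in $\xi$. On a domain the same identity holds eigenvalue by eigenvalue, with $1+\lambda_j$ in place of $1+|\xi|^{2}$. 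Composing with $\nabla A^{-1}$, and using that functions of $A$ commute, yields \eqref{eq:frozen-average}. Multiplying the scalar identity by the symbol $i\xi/(1+|\xi|^{2})$ of $\nabla A^{-1}$ gives exactly the multiplier $M_t(\xi)$. Interchanging the $\theta$-integral with the Fourier transform is justified by Fubini, since $|\widehat u(\xi)|$ is integrable against the bounded integrand for $u\in L^{2}$ or Schwartz.

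\textbf{Step 2: asymptotics.} Fix $t>0$. Then $0\le e^{-t(1+|\xi|^{2})}\le e^{-t|\xi|^{2}}$, so $1-e^{-t(1+|\xi|^{2})}=1+O(e^{-t|\xi|^{2}})$ as $|\xi|\to\infty$. The factor $(1+|\xi|^{2})^{-2}$ equals $|\xi|^{-4}(1+O(|\xi|^{-2}))$. Combining the two,
\[
M_t(\xi)=\frac{i\xi}{|\xi|^{4}}\bigl(1+O(|\xi|^{-2})\bigr),
\]
so $|M_t(\xi)|\sim|\xi|^{-3}$. Under Convention~\ref{conv:order} this means $M_t$ has principal order $-3$, the same order as $\nabla K_\tau$ in Proposition~\ref{prop:Ktau-principal-symbol}, though with coefficient $1$ rather than $\tau^{-1}$.

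No step is a genuine obstacle. The only point requiring care is that the order-$-3$ statement holds for each fixed $t>0$ but is not uniform as $t\downarrow0$. The heat correction $e^{-t(1+|\xi|^{2})}$ only becomes negligible once $|\xi|^{2}\gg t^{-1}$, and at $t=0$ the multiplier vanishes identically. I will note this explicitly, since it is the frozen-time counterpart of the non-uniformity recorded in Proposition~\ref{prop:MEP-near-diagonal}.
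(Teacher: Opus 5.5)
Your proposal is correct and follows essentially the same route as the paper: substitute $\theta=t-s$, pull out $\nabla A^{-1}$, use $\int_0^t e^{-\theta A}\,d\theta=A^{-1}(I-e^{-tA})$, and read off the high-frequency asymptotics of $M_t$. One technical slip: for $u\in L^{2}$, $\widehat u$ need not be integrable, so justify exchanging the $\theta$-integral with the Fourier transform by treating the integral as an $L^{2}$-valued Bochner integral and using that $\mathcal{F}$ is bounded on $L^{2}$. Your remark that the order $-3$ behaviour is not uniform as $t\downarrow 0$ is a correct refinement that the paper does not state.
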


\begin{proof}
By change of variables $\theta=t-s$,
\[
        \int_0^t \nabla e^{-(t-s)A}A^{-1}u\,ds
        =\nabla A^{-1}\int_0^t e^{-\theta A}u\,d\theta
        =\nabla A^{-1}\cdot A^{-1}(I-e^{-tA})u
        =\nabla A^{-2}(I-e^{-tA})u,
\]
where we used $\int_0^t e^{-\theta A}\,d\theta=A^{-1}(I-e^{-tA})$. The Fourier multiplier follows by direct computation, and the high-frequency asymptotics yield principal order $-3$.
\end{proof}

\begin{remark}[Time-averaged smoothing is not a static reduction]\label{rem:not-static-reduction}
For nonlinear MEP solutions, $u$ is \emph{not} frozen in time. The nonlinear memory term
\[
        \Vop u(t)=\int_0^t \nabla e^{-(t-s)A}A^{-1}u(s)\,ds
\]
cannot be replaced by $\nabla A^{-2}(I-e^{-tA})u(t)$ without an additional approximation argument controlling the time variation of $u(s)$ on the interval $[0,t]$. Consequently, the MEP criticality problem is a genuinely mixed space--time problem: its near-diagonal component (Proposition~\ref{prop:MEP-near-diagonal}) has Keller--Segel order $-1$, while its frozen-time average (Proposition~\ref{prop:frozen-average}) has fourth-order order $-3$. The static fourth-order elliptic scaling of PES cannot be transferred to MEP through the frozen-time average alone.
\end{remark}

\subsection{Space--time Volterra estimates}
\label{subsec:Volterra-estimates}

Although the near-diagonal multiplier has Keller--Segel order $-1$, the Volterra structure provides quantitative smoothing measured in mixed space--time norms. We record the basic estimates.

\begin{lemma}[Heat--Bessel estimate]\label{lem:heat-Bessel}
Let $1<q\le s<\infty$ and $\theta>0$. There exists $C=C(N,q,s)$ such that
\begin{equation}\label{eq:heat-Bessel}
        \|\nabla e^{-\theta A}A^{-1}f\|_{L^{s}(\R^{N})}
        \le C\,e^{-\theta}\theta^{-\beta(q,s)}\|f\|_{L^{q}(\R^{N})},
\end{equation}
with
\begin{equation}\label{eq:beta-qs}
        \beta(q,s):=\Bigl(\tfrac{N}{2}\bigl(\tfrac{1}{q}-\tfrac{1}{s}\bigr)-\tfrac{1}{2}\Bigr)_{+}.
\end{equation}
The analogous estimate (with $e^{-\theta}$ replaced by $1$) holds on $\T^{N}$ and on bounded Neumann domains.
\end{lemma}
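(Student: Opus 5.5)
The plan is to peel off the zeroth-order part of $A$ and then factor the operator so that each piece is a classical $L^{p}$ bound. On $\R^{N}$ we have $e^{-\theta A}=e^{-\theta}e^{\theta\Delta}$, and all operators involved are Fourier multipliers, so they commute. Hence it suffices to prove
\[
\|\nabla e^{\theta\Delta}A^{-1}f\|_{L^{s}}\le C\,\theta^{-\beta(q,s)}\|f\|_{L^{q}},\qquad \theta>0 .
\]
We treat $\theta\ge 1$ and $0<\theta<1$ separately.

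\emph{Large lag, $\theta\ge 1$.} We write the operator as $(\nabla e^{\theta\Delta})A^{-1}$. The Bessel potential $A^{-1}$ is bounded on $L^{q}$, since $G_2\in L^{1}$. The standard heat-kernel gradient estimate gives $\|\nabla e^{\theta\Delta}g\|_{L^{s}}\le C\theta^{-\frac N2(\frac1q-\frac1s)-\frac12}\|g\|_{L^{q}}$, by Young's inequality applied to $\nabla$ of the Gaussian. Because $\theta\ge1$ and $\frac N2(\frac1q-\frac1s)+\frac12\ge\beta(q,s)\ge 0$, this power is at most $\theta^{-\beta}$.

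\emph{Small lag, $0<\theta<1$.} Here we factor $\nabla A^{-1}=(\nabla A^{-1/2})\,A^{-1/2}$. The operator $\nabla A^{-1/2}$ has multiplier $i\xi(1+|\xi|^2)^{-1/2}$, a Mikhlin multiplier, so it is bounded on $L^{p}$ for every $1<p<\infty$ and commutes with $e^{\theta\Delta}$. The order-one Bessel potential $A^{-1/2}$ maps $L^{q}\to L^{r}$ whenever $\frac1q-\frac1N\le\frac1r\le\frac1q$ with $1<q$ and $r<\infty$. This follows from Hardy--Littlewood--Sobolev at the endpoint together with the $L^{1}$-integrability of the Bessel kernel and interpolation. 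Finally, $e^{\theta\Delta}:L^{r}\to L^{s}$ with norm $C\theta^{-\frac N2(\frac1r-\frac1s)}$.
\begin{itemize}
\item If $\frac1q-\frac1s\le\frac1N$, choose $r=s$. There is no heat loss, and the bound holds with $\beta=0$.
\item If $\frac1q-\frac1s>\frac1N$, choose $\frac1r=\frac1q-\frac1N$. This is admissible, since $\frac1r>\frac1s>0$ gives $r<\infty$ and $r\ge q>1$. The heat loss is then $\frac N2(\frac1q-\frac1N-\frac1s)=\frac N2(\frac1q-\frac1s)-\frac12=\beta(q,s)$.
\end{itemize}
Composing the three bounds gives the claim. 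The main delicate point is exactly this endpoint Sobolev step, which is why $q>1$ and $s<\infty$ are imposed.

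\emph{Torus and bounded Neumann domains.} On $\T^{N}$ the same multiplier argument applies: transference preserves the Mikhlin bounds, and the periodic heat kernel satisfies the same small-time estimates. On a bounded $C^{2}$ Neumann domain, for $\theta<1$ we would replace the multiplier facts by three ingredients. First, Gaussian upper bounds for the Neumann heat kernel give the $L^{r}\to L^{s}$ smoothing. Second, $\|\nabla A^{-1/2}g\|_{L^{p}}\lesssim\|g\|_{L^{p}}$ holds, via the Riesz-transform bound for Neumann Laplacians on $C^{2}$ domains. Third, $A^{-1/2}$ has Sobolev mapping properties via its kernel bound $|x-y|^{1-N}$. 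For $\theta\ge1$ we use the semigroup property $e^{-\theta A}=e^{-(\theta-1/2)A}e^{-A/2}$, uniform $L^{q}$-contractivity of the first factor, and the case $\theta=\tfrac12$ for the second. This yields a bound $C\le C\theta^{-\beta}\cdot\theta^{\beta}$, which is uniform on bounded $\theta$-sets. Without tracking the exponential decay, this gives the stated estimate with $e^{-\theta}$ replaced by $1$, up to the harmless adjustment $\theta^{-\beta}\to\max\{1,\theta^{-\beta}\}$ for large $\theta$. We expect the domain version of the Riesz-transform bound to be the step requiring the most care, and we would cite it rather than reprove it.
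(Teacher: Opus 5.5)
Your proof is correct and is essentially the paper's argument. You peel off $e^{-\theta}$ via $e^{-\theta A}=e^{-\theta}e^{\theta\Delta}$, map $L^{q}\to L^{r}$ with $\nabla A^{-1}$, then map $L^{r}\to L^{s}$ by heat ultracontractivity, with the same case split at $\tfrac1q-\tfrac1s=\tfrac1N$ and the same choice $\tfrac1r=\tfrac1q-\tfrac1N$. Your factorization $(\nabla A^{-1/2})A^{-1/2}$ just makes explicit the paper's appeal to Mikhlin plus Bessel--Sobolev embedding. Your separate $\theta\ge1$ case is redundant, since the factorization bound is uniform in $\theta>0$. On bounded domains you can drop the $\max\{1,\theta^{-\beta}\}$ adjustment: keep the factor $e^{-(\theta-1/2)}$ carried by $e^{-(\theta-1/2)A}$ (as $A\ge I$ there too) and use $e^{-\theta}\le C_\beta\theta^{-\beta}$. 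In any case the paper's own proof only claims the domain version with constants uniform on bounded $\theta$-intervals, so your version is no weaker than what it delivers.
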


\begin{proof}
The Fourier multiplier of $\nabla A^{-1}$ is $i\xi/(1+|\xi|^{2})$, which satisfies the Mikhlin--Hörmander condition $|\xi|^{|\alpha|}|\partial_\xi^{\alpha}(i\xi/(1+|\xi|^{2}))|\le C_\alpha$ for every multi-index $\alpha$. Hence $\nabla A^{-1}$ is bounded on $L^{p}(\R^{N})$ for $1<p<\infty$, and by Bessel-potential / Sobolev embedding it maps $L^{q}\to L^{r}$ boundedly for any $1<q\le r<\infty$ satisfying
\[
        \frac{1}{r}\ge\frac{1}{q}-\frac{1}{N}.
\]
The heat semigroup $\{e^{\theta\Delta}\}_{\theta>0}$ satisfies the standard ultracontractive estimate
\[
        \|e^{\theta\Delta}g\|_{L^{s}(\R^{N})}\le C\theta^{-\frac{N}{2}(\frac{1}{r}-\frac{1}{s})}\|g\|_{L^{r}(\R^{N})},
        \qquad 1\le r\le s\le\infty.
\]
Writing $e^{-\theta A}=e^{-\theta}e^{\theta\Delta}$, the Bessel mass term contributes the exponential factor $e^{-\theta}$, and the heat factor contributes the ultracontractive gain in integrability.

\emph{Case $1/q-1/s\le 1/N$.}
In this range, $\nabla A^{-1}:L^{q}\to L^{s}$ is bounded directly, and $\|e^{-\theta A}h\|_{L^{s}}\le e^{-\theta}\|h\|_{L^{s}}$. Hence
\[
        \|\nabla e^{-\theta A}A^{-1}f\|_{L^{s}}
        =\|e^{-\theta A}\nabla A^{-1}f\|_{L^{s}}
        \le e^{-\theta}\|\nabla A^{-1}f\|_{L^{s}}
        \le Ce^{-\theta}\|f\|_{L^{q}},
\]
which is \eqref{eq:heat-Bessel} with $\beta(q,s)=0$.

\emph{Case $1/q-1/s>1/N$.}
Set $1/r:=1/q-1/N$, so that $\nabla A^{-1}:L^{q}\to L^{r}$ boundedly. Then
\[
        \|\nabla e^{-\theta A}A^{-1}f\|_{L^{s}}
        \le \|e^{-\theta A}\|_{L^{r}\to L^{s}}\,\|\nabla A^{-1}f\|_{L^{r}}
        \le Ce^{-\theta}\theta^{-\frac{N}{2}(\frac{1}{r}-\frac{1}{s})}\|f\|_{L^{q}}.
\]
By the choice of $r$, the exponent is
\[
        \frac{N}{2}\!\left(\frac{1}{r}-\frac{1}{s}\right)
        =\frac{N}{2}\!\left(\frac{1}{q}-\frac{1}{N}-\frac{1}{s}\right)
        =\frac{N}{2}\!\left(\frac{1}{q}-\frac{1}{s}\right)-\frac{1}{2}=\beta(q,s),
\]
yielding \eqref{eq:heat-Bessel}.

The torus and bounded Neumann domain cases proceed identically, using the Mikhlin--Hörmander theorem in those geometries (with the exponential decay $e^{-\theta}$ absent on bounded domains, where it is absorbed into a constant on $[0,T]$).
\end{proof}

\begin{lemma}[Pointwise-in-time Volterra bound]\label{lem:Volterra-pointwise}
Let $T>0$, $1<q\le s<\infty$. For $0<t\le T$,
\begin{equation}\label{eq:Volterra-pointwise}
        \|\Vop u(t)\|_{L^{s}}
        \le C\int_0^t (t-\sigma)^{-\beta(q,s)}\|u(\sigma)\|_{L^{q}}\,d\sigma.
\end{equation}
\end{lemma}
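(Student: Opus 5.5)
The bound follows by applying Minkowski's integral inequality to the Volterra integral and then the heat--Bessel estimate of Lemma~\ref{lem:heat-Bessel} to each time slice. Take $u\in C([0,T];L^{q})$, or more generally $u$ strongly measurable with $\int_0^t (t-\sigma)^{-\beta(q,s)}\|u(\sigma)\|_{L^q}\,d\sigma<\infty$. For fixed $t\in(0,T]$ I would interpret
\[
        \Vop u(t)=\int_0^t \nabla e^{-(t-\sigma)A}A^{-1}u(\sigma)\,d\sigma
\]
as an $L^{s}$-valued Bochner integral. To justify this, I would first check that $\sigma\mapsto \nabla e^{-(t-\sigma)A}A^{-1}u(\sigma)$ is strongly measurable into $L^{s}$ on $(0,t)$. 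This holds because for each $\theta>0$ the operator $\nabla e^{-\theta A}A^{-1}$ is bounded $L^{q}\to L^{s}$ by Lemma~\ref{lem:heat-Bessel}, and it depends continuously on $\theta$ in operator norm for $\theta>0$. The latter follows from analyticity of the semigroup, or directly from the Fourier multiplier on $\R^{N}$.

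The core step is Minkowski's integral inequality:
\[
        \|\Vop u(t)\|_{L^{s}}
        \le\int_0^t\bigl\|\nabla e^{-(t-\sigma)A}A^{-1}u(\sigma)\bigr\|_{L^{s}}\,d\sigma
        \le C\int_0^t e^{-(t-\sigma)}(t-\sigma)^{-\beta(q,s)}\|u(\sigma)\|_{L^{q}}\,d\sigma.
\]
Here the second inequality is \eqref{eq:heat-Bessel} with $\theta=t-\sigma$. Dropping $e^{-(t-\sigma)}\le 1$ gives \eqref{eq:Volterra-pointwise} on $\R^{N}$. On $\T^{N}$ and on bounded Neumann domains, Lemma~\ref{lem:heat-Bessel} already holds without the exponential factor. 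The constant may depend on $T$ there, which is why the statement is restricted to $0<t\le T$.

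The main point requiring care is the case where the right-hand side is finite but the kernel is singular, namely $\beta(q,s)\ge 1$. In that case the integral may diverge unless $\|u(\sigma)\|_{L^{q}}$ vanishes appropriately near $\sigma=t$. I would state the estimate as an inequality in $[0,\infty]$, so it holds trivially when the right side is infinite. When the right side is finite, I would justify the Bochner integral by truncation: apply the argument to $\int_0^{t-\delta}$ and let $\delta\downarrow0$, using monotone convergence on the scalar side and the fact that the truncated integrals are Cauchy in $L^{s}$. For $\beta<1$ and $u\in L^{\infty}(0,T;L^{q})$ the kernel is integrable and no truncation is needed. Finally, as in the proof of Proposition~\ref{prop:MEP-Volterra}, the identity $\nabla\int=\int\nabla$ is justified by closedness of $\nabla$ from $L^{s}$ together with absolute convergence of the integral.
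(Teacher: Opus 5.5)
Your proposal is correct and is essentially the paper's argument: you move the $L^{s}$ norm inside the Volterra integral (Minkowski) and apply Lemma~\ref{lem:heat-Bessel} with $\theta=t-\sigma$, discarding $e^{-(t-\sigma)}\le 1$. The measurability and Bochner-integrability details you add are extra rigor that the paper omits, and the truncation step is not needed, since finiteness of the right-hand side already makes the strongly measurable integrand Bochner integrable by Bochner's criterion.
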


\begin{proof}
Apply Lemma~\ref{lem:heat-Bessel} pointwise in $\sigma$ to the integrand in $\Vop u(t)=\int_0^t \nabla e^{-(t-\sigma)A}A^{-1}u(\sigma)\,d\sigma$ and integrate.
\end{proof}

\begin{proposition}[Mixed-norm Volterra estimate]\label{prop:mixed-norm-Volterra}
Let $T>0$, $1<q\le s<\infty$, and $\beta=\beta(q,s)$ as in \eqref{eq:beta-qs}.
\begin{enumerate}[label=(\roman*),leftmargin=2em]
\item Suppose $\beta>0$. Let $1\le p,r,a\le\infty$ satisfy
\begin{equation}\label{eq:Young-condition}
        1+\frac{1}{r}=\frac{1}{a}+\frac{1}{p},\qquad a\beta<1.
\end{equation}
Then
\begin{equation}\label{eq:mixed-norm-Volterra}
        \|\Vop u\|_{L^{r}(0,T;L^{s})}
        \le C\,T^{1/a-\beta}\,\|u\|_{L^{p}(0,T;L^{q})}.
\end{equation}
\item Suppose $\beta=0$. Then for every $1\le p\le\infty$,
\begin{equation}\label{eq:mixed-norm-beta-zero}
        \|\Vop u\|_{L^{p}(0,T;L^{s})}\le C\,T\,\|u\|_{L^{p}(0,T;L^{q})}.
\end{equation}
\end{enumerate}
\end{proposition}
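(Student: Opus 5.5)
The plan is to start from the pointwise-in-time Volterra bound of Lemma~\ref{lem:Volterra-pointwise} and treat the right-hand side as a one-dimensional convolution in time. Set $g(\sigma):=\|u(\sigma)\|_{L^{q}}\mathbf 1_{[0,T]}(\sigma)$ and $k_\beta(\theta):=\theta^{-\beta}\mathbf 1_{(0,T]}(\theta)$. For $0<t\le T$, Lemma~\ref{lem:Volterra-pointwise} gives
\[
        \|\Vop u(t)\|_{L^{s}}\le C\,(k_\beta * g)(t),
\]
since the integration variable $\sigma\in(0,t)$ forces $t-\sigma\in(0,T]$. Taking the $L^{r}(0,T)$ norm in $t$ reduces both items to a scalar Young inequality on $\R$.

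For item (i), we apply Young's convolution inequality under the exponent relation $1+1/r=1/a+1/p$ from \eqref{eq:Young-condition}:
\[
        \|k_\beta*g\|_{L^{r}(\R)}\le\|k_\beta\|_{L^{a}(\R)}\|g\|_{L^{p}(\R)}.
\]
Here $\|g\|_{L^{p}(\R)}=\|u\|_{L^{p}(0,T;L^{q})}$. The remaining step is to compute the kernel norm. When $a<\infty$, we have $\|k_\beta\|_{L^{a}}^{a}=\int_0^T\theta^{-a\beta}\,d\theta=T^{1-a\beta}/(1-a\beta)$, and this is finite exactly because $a\beta<1$. Hence $\|k_\beta\|_{L^{a}}=(1-a\beta)^{-1/a}T^{1/a-\beta}$, which gives \eqref{eq:mixed-norm-Volterra} with a constant depending also on $a\beta$. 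The endpoint $a=\infty$ is excluded, since $\beta>0$ makes $k_\beta$ unbounded. This also settles the reading of the constraint: $a\beta<1$ should be read with $a<\infty$, and that is consistent with $k_\beta\in L^{a}$. The $p=1$ and $r=\infty$ endpoints are covered by Young's inequality as well.

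For item (ii), $\beta=0$ and the kernel is $\mathbf 1_{(0,T]}$. We use Young's inequality with $a=1$, $r=p$, so $\|k_0\|_{L^{1}}=T$. This yields \eqref{eq:mixed-norm-beta-zero} for every $1\le p\le\infty$. Alternatively, Minkowski's integral inequality applied directly gives the same bound.

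The only delicate points are measurability and the use of Lemma~\ref{lem:Volterra-pointwise} itself. For measurability, we need $\sigma\mapsto\|u(\sigma)\|_{L^{q}}$ to be measurable, which holds for $u\in L^{p}(0,T;L^{q})$ by Bochner measurability. For the lemma, its constant carries the factor $e^{-\theta}\le1$ on $\R^{N}$, and on bounded domains it is uniform on $[0,T]$. I expect no real obstacle. The main point to check is that the constraint $a\beta<1$ is exactly what makes the truncated power kernel lie in $L^{a}$, and that this produces the stated power $T^{1/a-\beta}$.
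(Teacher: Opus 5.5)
Your proposal is correct and follows the paper's proof: you dominate $\|\Vop u(t)\|_{L^{s}}$ by the time convolution of $k_\beta$ with $\|u(\cdot)\|_{L^{q}}$ via Lemma~\ref{lem:Volterra-pointwise}, and then apply Young's inequality with $\|k_\beta\|_{L^{a}(0,T)}=(1-a\beta)^{-1/a}T^{1/a-\beta}$. The only difference is cosmetic and occurs in item (ii): the paper applies H\"older pointwise in $t$ to get $CT^{1-1/p}\|u\|_{L^{p}(0,T;L^{q})}$ and then integrates in time, whereas you use Young with $a=1$, and both routes give the factor $T$.
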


\begin{proof}
By Lemma~\ref{lem:Volterra-pointwise}, $\|\Vop u(t)\|_{L^{s}}\le C(k_\beta\star\|u\|_{L^{q}})(t)$, where $k_\beta(\theta)=\theta^{-\beta}\mathbf 1_{(0,T)}(\theta)$ and $\star$ denotes convolution on the half-line (extending by zero outside $(0,T)$).

For $\beta>0$ and $a\beta<1$, $k_\beta\in L^{a}(0,T)$ with $\|k_\beta\|_{L^{a}(0,T)}=((1-a\beta)^{-1}T^{1-a\beta})^{1/a}\le C_{\beta,a}T^{1/a-\beta}$. Young's convolution inequality \eqref{eq:Young-condition} gives
$
        \|\Vop u\|_{L^{r}(0,T;L^{s})}\le C\|k_\beta\|_{L^{a}(0,T)}\|u\|_{L^{p}(0,T;L^{q})},
$
yielding \eqref{eq:mixed-norm-Volterra}. For $\beta=0$, \eqref{eq:Volterra-pointwise} gives $\|\Vop u(t)\|_{L^{s}}\le C\int_0^t\|u(\sigma)\|_{L^{q}}\,d\sigma\le C\,T^{1-1/p}\|u\|_{L^{p}(0,T;L^{q})}$ (Hölder), and \eqref{eq:mixed-norm-beta-zero} follows.
\end{proof}

\begin{theorem}[MEP drift estimate]\label{thm:MEP-drift}
Let $T>0$, $1<q\le s<\infty$, and let exponents satisfy the hypotheses of Proposition~\ref{prop:mixed-norm-Volterra}. Suppose $c_0\in W^{1,s}(\Omega)$ and $u\in L^{p}(0,T;L^{q}(\Omega))$. Then the MEP signal satisfies
\begin{equation}\label{eq:MEP-drift-bound}
        \|\nabla c\|_{L^{r}(0,T;L^{s})}
        \le CT^{1/r}\|\nabla c_0\|_{L^{s}}+CT^{1/a-\beta(q,s)}\|u\|_{L^{p}(0,T;L^{q})}
\end{equation}
when $\beta(q,s)>0$. If $\beta(q,s)=0$, the second term is replaced by $CT\|u\|_{L^{p}(0,T;L^{q})}$ with $r=p$.
\end{theorem}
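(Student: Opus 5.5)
The plan is to start from the Volterra representation of Proposition~\ref{prop:MEP-Volterra}, $\nabla c(t)=\nabla e^{-tA}c_0+\Vop u(t)$, and to bound the two pieces separately in $L^{r}(0,T;L^{s})$. The triangle inequality in the mixed norm then gives \eqref{eq:MEP-drift-bound} at once. The memory term requires no new work: when $\beta(q,s)>0$ and the exponents satisfy \eqref{eq:Young-condition}, Proposition~\ref{prop:mixed-norm-Volterra}(i) gives
\[
\|\Vop u\|_{L^{r}(0,T;L^{s})}\le CT^{1/a-\beta(q,s)}\|u\|_{L^{p}(0,T;L^{q})}.
\]
When $\beta(q,s)=0$ and $r=p$, part (ii) of the same proposition gives the bound $CT\|u\|_{L^{p}(0,T;L^{q})}$. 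Strictly, the proof of (ii) yields $T^{1-1/p}$ in the pointwise-in-time bound and then $T^{1/p}$ after taking the $L^{p}$ norm in time, so the total power is $T$, as stated.

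For the initial-data term, the gradient commutes with the semigroup. On $\R^{N}$ and $\T^{N}$ this is immediate from the Fourier multipliers: $\nabla e^{-tA}c_0=e^{-tA}\nabla c_0$. The semigroup is then contractive on $L^{s}$, since $e^{-tA}=e^{-t}e^{t\Delta}$ and the heat kernel is a probability density. This gives $\|\nabla e^{-tA}c_0\|_{L^{s}}\le \|\nabla c_0\|_{L^{s}}$ for every $t\in(0,T]$. Taking the $L^{r}(0,T)$ norm of this constant bound produces the factor $T^{1/r}$.

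The main obstacle is the bounded Neumann domain. There $\nabla$ does not commute with the Neumann semigroup, so I would instead use the standard gradient estimate for the Neumann heat semigroup on $C^{2}$ domains: $\|\nabla e^{t\Delta}\varphi\|_{L^{s}}\le C\|\nabla\varphi\|_{L^{s}}$ for $t\in(0,T]$ and $\varphi\in W^{1,s}(\Omega)$. This estimate follows from analytic-semigroup theory via the identification of $W^{1,s}$ with the domain of the fractional power $A^{1/2}$ (cf.~\cite{henry2006geometric,lunardi2012analytic}). Its constant is uniform on $[0,T]$, and possibly $T$-dependent, which is absorbed into $C$. With this estimate, the same $T^{1/r}$ bound follows. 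The $u$-term on domains uses the domain version of Lemma~\ref{lem:heat-Bessel}, which was already recorded. Combining the two pieces completes the argument.
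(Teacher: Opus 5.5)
Your proposal is correct and follows the paper's proof: the triangle inequality on the Volterra splitting $\nabla c=\nabla e^{-tA}c_0+\Vop u$, a uniform-in-$t$ bound $\|\nabla e^{-tA}c_0\|_{L^{s}}\le C\|\nabla c_0\|_{L^{s}}$ giving the factor $T^{1/r}$, and Proposition~\ref{prop:mixed-norm-Volterra} for the memory term. You also supply justifications the paper leaves implicit, namely Fourier commutation and contractivity on $\R^{N}$ and $\T^{N}$, and the $T$-bookkeeping in case (ii). One small point on the Neumann domain: the identification $D(A^{1/2})=W^{1,s}$ by itself only bounds $\nabla e^{-tA}c_0$ by the full $W^{1,s}$ norm, so to get $\|\nabla c_0\|_{L^{s}}$ alone you should first replace $c_0$ by $c_0-\bar c_0$ (the semigroup maps constants to constants, which $\nabla$ kills) and then apply the Poincar\'e--Wirtinger inequality.
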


\begin{proof}
From \eqref{eq:nabla-c-Volterra}, $\|\nabla c(t)\|_{L^{s}}\le \|\nabla e^{-tA}c_0\|_{L^{s}}+\|\Vop u(t)\|_{L^{s}}$. The semigroup bound $\|\nabla e^{-tA}c_0\|_{L^{s}}\le C\|\nabla c_0\|_{L^{s}}$ gives the first term of \eqref{eq:MEP-drift-bound}; the second is Proposition~\ref{prop:mixed-norm-Volterra}.
\end{proof}

\begin{remark}[Space--time, not static, criticality]\label{rem:spacetime-criticality}
Theorem~\ref{thm:MEP-drift} is the correct form of estimate for MEP: smoothing appears through a time convolution. Importing the PES critical exponent $q_c=N/4$ from Proposition~\ref{prop:PES-scaling} is not legitimate without additional control on the time variation of $u$. Any threshold theory for MEP must be expressed in mixed norms.
\end{remark}

\subsection{Mild formulation and continuation criterion}
\label{subsec:mild-continuation}

The density equation in MEP, written in divergence form $\partial_t u=\Delta u-\chi\Div(u\nabla c)$, admits the mild formulation
\begin{equation}\label{eq:MEP-mild-u}
        u(t)=e^{t\Delta}u_0
        -\chi\int_0^t \nabla e^{(t-s)\Delta}\cdot\bigl(u(s)\nabla c(s)\bigr)\,ds,
\end{equation}
coupled to $\nabla c=\nabla e^{-tA}c_0+\Vop u$ via Proposition~\ref{prop:MEP-Volterra}.

\begin{proposition}[Classical continuation criterion]\label{prop:MEP-continuation}
Let $u_0\in L^{1}(\Omega)\cap L^{\infty}(\Omega)$ with $u_0\ge 0$ and $c_0\in W^{1,\infty}(\Omega)$. Let $(u,c)$ be a classical solution of \eqref{eq:MEP-intro} on $[0,T_{\max})$, $T_{\max}\in (0,\infty]$. If $T<T_{\max}$ and
\begin{equation}\label{eq:continuation-bound}
        \|u\|_{L^{\infty}(0,T;L^{\infty})}+\|\nabla c\|_{L^{1}(0,T;W^{1,\infty})}<\infty,
\end{equation}
then the solution extends past $T$. In particular, if $T_{\max}<\infty$, then
\begin{equation}\label{eq:blowup-alternative}
        \limsup_{T\uparrow T_{\max}}\Bigl[\|u\|_{L^{\infty}(0,T;L^{\infty})}+\|\nabla c\|_{L^{1}(0,T;W^{1,\infty})}\Bigr]=\infty.
\end{equation}
\end{proposition}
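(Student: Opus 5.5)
The plan is a standard local well-posedness and continuation argument built on the mild formulation \eqref{eq:MEP-mild-u} coupled to the Volterra representation of Proposition~\ref{prop:MEP-Volterra}.

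\textbf{Step 1: local existence time depends only on the data norms.} For data $(u(t_0),c(t_0))$ with $u(t_0)\in L^{1}\cap L^{\infty}$, $u(t_0)\ge0$, and $c(t_0)\in W^{1,\infty}$, I would construct a solution on $[t_0,t_0+\delta]$ by a contraction in the space
\[
X_\delta=C([t_0,t_0+\delta];L^{\infty})\times C([t_0,t_0+\delta];W^{1,\infty}).
\]
The map sends $u$ to the right-hand side of \eqref{eq:MEP-mild-u}, and $c$ to the Duhamel expression \eqref{eq:MEP-Duhamel}. The key estimates are the following.
\begin{itemize}
\item The smoothing bound $\|\nabla e^{\theta\Delta}\cdot F\|_{L^\infty}\le C\theta^{-1/2}\|F\|_{L^\infty}$. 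This is integrable in $\theta$.
\item For the signal, $\|\nabla e^{-\theta A}A^{-1}f\|_{L^\infty}\le C\|f\|_{L^\infty}$. On the scale of Lemma~\ref{lem:heat-Bessel}, $A^{-1}$ gains two derivatives, and the time integral is harmless.
\end{itemize}
These give a contraction for $\delta=\delta(\|u(t_0)\|_{L^\infty},\|c(t_0)\|_{W^{1,\infty}})>0$. Uniqueness among classical solutions and positivity of $u$ come from the maximum principle. Parabolic Schauder theory then upgrades the mild solution to a classical one.

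\textbf{Step 2: uniform control on $[0,T]$.} Assume \eqref{eq:continuation-bound}. I need $\sup_{t<T}\bigl(\|u(t)\|_{L^\infty}+\|c(t)\|_{W^{1,\infty}}\bigr)<\infty$.
\begin{itemize}
\item The $u$ part is given directly by the hypothesis.
\item For $c$, I apply Theorem~\ref{thm:MEP-drift} in the limiting form $s=\infty$, or simply the direct estimate
\[
\|\nabla c(t)\|_{L^\infty}\le C\|\nabla c_0\|_{L^\infty}+C\,T\,\|u\|_{L^\infty(0,T;L^\infty)}.
\]
This uses $\|\nabla e^{-tA}c_0\|_{L^\infty}\le C\|\nabla c_0\|_{L^\infty}$; with Neumann conditions this is a standard gradient bound for the semigroup.
\item $\|c(t)\|_{L^\infty}$ follows similarly from the comparison principle.
\end{itemize}
The extra hypothesis $\nabla c\in L^{1}(0,T;W^{1,\infty})$ gives a Gronwall-type control of the drift term $u\Delta c$ when estimating $u$ in stronger norms, for example $W^{1,p}$ or Hölder norms. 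This is needed for the classical (not merely mild) extension.

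\textbf{Step 3: extension.} Since the existence time $\delta$ in Step 1 depends only on the norms bounded in Step 2, I restart at $t_0=T-\delta/2$. This yields a solution on $[0,T+\delta/2]$, contradicting maximality unless $T_{\max}>T$. The blow-up alternative \eqref{eq:blowup-alternative} is then the contrapositive: if the limsup were finite, applying the extension at every $T<T_{\max}$ would push the solution past $T_{\max}$.

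\textbf{Main obstacle.} I expect the hardest point to be the $s=\infty$ endpoint and the Neumann-domain semigroup estimates. Lemma~\ref{lem:heat-Bessel} was stated only for $s<\infty$. To avoid the endpoint, I would work instead in $L^\infty\times W^{1,s}$ with $s>N$ large and use the Sobolev embedding $W^{1,s}\hookrightarrow L^\infty$ where needed. This requires the Neumann gradient estimate $\|\nabla e^{t\Delta}\phi\|_{L^s}\le C(1+t^{-1/2})\|\phi\|_{L^s}$, which is standard for $C^2$ domains.
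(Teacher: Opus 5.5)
Your proposal is correct and uses essentially the same continuation argument as the paper: control on $[0,T)$ from the hypothesis, plus a local existence theorem whose existence time depends only on controlled quantities, restarted just below $T$. The paper obtains H\"older and then classical bounds for $u$ up to $T$ via standard parabolic regularity before invoking classical local existence; you instead make the existence time depend only on $\|u\|_{L^\infty}$ and $\|c\|_{W^{1,\infty}}$, and your Step~2 then shows the $\nabla c\in L^{1}(0,T;W^{1,\infty})$ hypothesis is essentially redundant. One small correction: the uniqueness needed to glue the restarted solution to the original one comes from the Lipschitz estimate in your contraction (a Gronwall argument on the mild formulation), not from the maximum principle.
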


\begin{proof}
The hypothesis \eqref{eq:continuation-bound} ensures that the drift $\nabla c\in L^{1}(0,T;W^{1,\infty})$ and $u\in L^{\infty}(0,T;L^{\infty})$, so the density equation $\partial_t u-\Delta u=-\chi\Div(u\nabla c)$ is a uniformly parabolic equation with controlled lower-order coefficients. Standard parabolic regularity (cf.~\cite{ladyzhenskaia1968linear}) yields Hölder estimates and then classical bounds for $u$ up to time $T$, depending only on the quantities in \eqref{eq:continuation-bound} and the initial data. The signal equation $\partial_t c+Ac=A^{-1}u$ is linear parabolic with $L^{\infty}$ right-hand side, so $c$ acquires corresponding regularity. The classical local existence theorem then extends the solution past $T$.
\end{proof}

\begin{remark}[No three-dimensional mass-critical theorem]\label{rem:no-3D-MEP}
The estimates above do \emph{not} imply a three-dimensional mass-critical theorem for MEP. The near-diagonal multiplier has Keller--Segel order $-1$, whereas the frozen-time average has fourth-order order $-3$. Whether the Volterra memory produces a new effective critical threshold, distinct from both the Keller--Segel and PES thresholds, requires a scale-invariant mixed-norm theory of the nonlinear map
\[
        u\longmapsto\Div\!\left(u\int_0^t\nabla e^{-(t-s)A}A^{-1}u(s)\,ds\right).
\]
This question is formulated as Problem~\ref{prob:MEP-criticality} in Section~\ref{sec:open}.
\end{remark}

\section{Open threshold problems and conditional statements}
\label{sec:open}

The structural results of the previous sections settle the operator classification but leave open the corresponding sharp threshold theory. We record here the precise problems whose resolution would complete the picture, together with conditional statements that follow immediately from the structural analysis.

\subsection{Sharp $K_\tau$-adapted Adams/log-HLS inequality for PES}
\label{subsec:sharp-log-HLS}

In view of the four-dimensional logarithmic kernel (Proposition~\ref{prop:log-kernel-R4}) and the concentration scaling (Proposition~\ref{prop:candidate-mass}), the natural sharp inequality is the fourth-order Adams/log-HLS analogue of the two-dimensional Beckner log-HLS inequality \cite{beckner1993sharp}. Let
\begin{equation}\label{eq:admissible-class}
        \Adm_M(\Omega):=\left\{u\ge 0:\ \int_\Omega u\,dx=M,\ \int_\Omega u(1+|\log u|)\,dx<\infty\right\}.
\end{equation}
The condition $\int u|\log u|<\infty$ ensures that the entropy $\int u\log u$ is finite \emph{and} that the inequality \eqref{eq:target-inequality} below is non-vacuous: $\int u\log u$ can be negative, so the inequality is meaningful only when the entropy side is bounded below. On a bounded domain $\Omega$ this is automatic from the elementary bound $u\log u\ge -1/e$. On $\R^{4}$, finiteness must be supplemented by a tightness or moment condition such as $\int|x|^{2}u\,dx<\infty$ to ensure that $\int u\log u$ is bounded below on $\Adm_M$; this is standard in two-dimensional Keller--Segel theory \cite{blanchet2006two}. We adopt this convention throughout.

\begin{problem}[Sharp $K_\tau$-adapted log-HLS/Adams inequality]\label{prob:sharp-logHLS}
Let $N=4$ and let $\Omega$ be either a bounded $C^{\infty}$ Neumann domain, $\T^{4}$, or $\R^{4}$ (in the last case with the moment normalisation above). Determine the sharp constants for which
\begin{equation}\label{eq:target-inequality}
        \int_\Omega u\,K_\tau u\,dx\le a_{\sharp}(M,\tau,\Omega)\int_\Omega u\log u\,dx+C(M,\tau,\Omega)
\end{equation}
holds for all $u\in\Adm_M(\Omega)$, with $a_{\sharp}$ and $C$ depending only on $M$, $\tau$, and $\Omega$ (and on the boundary condition). Since $\int u\log u$ may take both signs, the meaningful subcritical condition is $\tfrac{\chi}{2}a_{\sharp}<1$, which by Proposition~\ref{prop:conditional-PES} converts the inequality into a bound on the free energy modulo a finite additive constant.
\end{problem}

The one-point concentration computation forces any sharp inequality of the form \eqref{eq:target-inequality} to satisfy the lower constraint
\begin{equation}\label{eq:a-sharp-forced}
        a_{\sharp}(M,\tau,\Omega)\ge\frac{M}{32\pi^{2}\tau}
\end{equation}
at the logarithmic concentration scale. Indeed, for the concentrating family $u_\varepsilon$ of \eqref{eq:u-eps}, Lemmas~\ref{lem:entropy-scaling}--\ref{lem:interaction-scaling} give
\[
        \frac{\int_\Omega u_\varepsilon\,K_\tau u_\varepsilon\,dx}{\int_\Omega u_\varepsilon\log u_\varepsilon\,dx}
        =\frac{M}{32\pi^{2}\tau}+o(1),\qquad \varepsilon\downarrow 0,
\]
so any constant $a$ for which \eqref{eq:target-inequality} holds uniformly along this family must satisfy $a\ge M/(32\pi^{2}\tau)$, with equality forced if the inequality is to be saturated by one-point concentration. The natural conjectural sharp form is therefore
\begin{equation}\label{eq:conjectural-a-sharp}
        a_{\sharp}(M,\tau,\Omega)=\frac{M}{32\pi^{2}\tau},
\end{equation}
up to lower-order domain-dependent corrections absorbed into $C(M,\tau,\Omega)$. With this value, the subcritical condition $\tfrac{\chi}{2}a_{\sharp}<1$ becomes exactly
\[
        M<\frac{64\pi^{2}\tau}{\chi}=M_{\!*},
\]
matching the candidate critical mass of Proposition~\ref{prop:candidate-mass}. This consistency is the heuristic basis for the following conjecture.

\begin{conjecture}[Sharp threshold for PES free energy in $N=4$]\label{conj:sharp-threshold}
Let $N=4$ and let $\Omega$ be $\R^{4}$, $\T^{4}$, or a bounded $C^{2}$ Neumann domain. The PES free energy $\Free_{\mathrm{PES}}$ satisfies the alternative:
\begin{enumerate}[label=(\roman*),leftmargin=2em]
\item if $0<M<M_{\!*}$, then $\inf_{u\in\Adm_M(\Omega)}\Free_{\mathrm{PES}}[u]>-\infty$ \emph{(}with the usual tightness or moment condition on $\R^{4}$ to control the entropy from below\emph{)};
\item if $M>M_{\!*}$, then $\inf_{u\in\Adm_M(\Omega)}\Free_{\mathrm{PES}}[u]=-\infty$;
\item at $M=M_{\!*}$, loss of compactness occurs through one-point mass concentration, with the precise compactness alternative depending on the domain and on the boundary conditions.
\end{enumerate}
\end{conjecture}

Item (ii) is established (Corollary~\ref{cor:descent-above-M-star}). The crux of the conjecture is item (i), which is equivalent to Problem~\ref{prob:sharp-logHLS} together with the appropriate Cauchy--Schwarz/Jensen analysis. The fourth-order Adams inequality of \cite{adams1988sharp} and its sharp form in $\R^{4}$ \cite{lam2012sharp} provide an exponential integrability framework but do not directly imply a sharp log-HLS form adapted to the specific kernel $K_\tau$; the natural conjectural form is dictated by \eqref{eq:conjectural-a-sharp}.

\subsection{Supercritical PES dynamics}
\label{subsec:supercritical-dynamics}

For $M>M_{\!*}$, Corollary~\ref{cor:descent-above-M-star} produces sequences with $\Free_{\mathrm{PES}}[u_\varepsilon]\to -\infty$. The entropy-dissipation identity \eqref{eq:PES-dissipation} implies that smooth PES solutions cannot increase their free energy, but this does not by itself yield dynamical concentration.

\begin{problem}[Supercritical PES dynamics in $N=4$]\label{prob:PES-dynamics}
Let $N=4$ and $M>M_{\!*}$. Determine:
\begin{enumerate}[label=(\roman*),leftmargin=2em]
\item whether negative free energy implies finite-time blow-up of PES;
\item if blow-up occurs, whether the concentrating profile is governed by the fourth-order logarithmic kernel $(8\pi^{2}\tau)^{-1}\log(1/|x-y|)$, in the sense of a refined profile/quantisation theory analogous to \cite{raphael2014stability};
\item whether, in the absence of finite-time blow-up, infinite-time aggregation can occur, in analogy with the indirect-production critical-mass phenomenon of \cite{tao2017critical};
\item at the threshold $M=M_{\!*}$, whether mass quantisation holds.
\end{enumerate}
\end{problem}

A complete answer requires more than the energy identity: it requires a dynamical mechanism (virial identity, monotonicity formula, or sharp compactness) linking the fourth-order logarithmic attraction to spatial concentration.

\subsection{Memory criticality for MEP}
\label{subsec:MEP-criticality}

For MEP, the structural difficulty is that the drift cannot be replaced by a static operator: Remark~\ref{rem:not-static-reduction} forbids transferring the PES exponent $q_c=N/4$ via the frozen-time average alone. A proper threshold theory must operate at the level of the bilinear nonlinearity, in mixed space--time norms.

\begin{problem}[MEP memory criticality]\label{prob:MEP-criticality}
Develop a critical theory for the MEP system \eqref{eq:MEP-intro} based on the Volterra drift $\Vop u(t)=\int_0^t\nabla e^{-(t-s)A}A^{-1}u(s)\,ds$. Specifically, determine whether there exists a Banach space $X$ of densities satisfying:
\begin{enumerate}[label=(\roman*),leftmargin=2em]
\item $X$ is invariant (or critical) under the parabolic scaling of the density equation;
\item the Volterra estimate \emph{(}Theorem~\ref{thm:MEP-drift}\emph{)} closes the drift bound: $u\in X$ implies $\nabla c\in Y$ for a drift space $Y$ controlling $\Div(u\nabla c)$;
\item the bilinear map
\[
        (u,v)\longmapsto\int_0^t \nabla e^{(t-s)\Delta}\!\cdot\!\left[u(s)\int_0^s\nabla e^{-(s-\sigma)A}A^{-1}v(\sigma)\,d\sigma\right]ds
\]
is bounded on $X\times X\to X$;
\item the continuation criterion of Proposition~\ref{prop:MEP-continuation} can be replaced by a critical or subcritical condition in $X$;
\item the resulting threshold distinguishes whether MEP exhibits Keller--Segel-type criticality, PES-type averaged criticality, or a genuinely new memory-induced threshold.
\end{enumerate}
\end{problem}

The interaction between the near-diagonal Keller--Segel order $-1$ and the frozen-time averaged order $-3$ is the essential open question for MEP. We do not assert any specific critical exponent in advance of the resolution of Problem~\ref{prob:MEP-criticality}.

\subsection{Conditional statements}
\label{subsec:conditional-statements}

We close with two conditional statements that follow immediately from the structural analysis. Both record what is implied by, respectively, an affirmative resolution of Problem~\ref{prob:sharp-logHLS} and a choice of admissible space $X$ in Problem~\ref{prob:MEP-criticality}.

\begin{proposition}[Conditional PES lower bound]\label{prop:conditional-PES}
Let $N=4$, $M>0$, and suppose Problem~\ref{prob:sharp-logHLS} admits a solution: there exist constants $a(M,\tau,\Omega)$ and $C(M,\tau,\Omega)$ such that
\begin{equation}\label{eq:conditional-inequality}
        \int_\Omega u\,K_\tau u\,dx\le a(M,\tau,\Omega)\int_\Omega u\log u\,dx+C(M,\tau,\Omega)
\end{equation}
for all $u\in\Adm_M(\Omega)$, with $\tfrac{\chi}{2}a(M,\tau,\Omega)<1$. Then
\begin{equation}\label{eq:conditional-bound}
        \Free_{\mathrm{PES}}[u]\ge\Bigl(1-\tfrac{\chi}{2}a(M,\tau,\Omega)\Bigr)\!\int_\Omega u\log u\,dx-\tfrac{\chi}{2}C(M,\tau,\Omega),
\end{equation}
and consequently $\inf_{u\in\Adm_M(\Omega)}\Free_{\mathrm{PES}}[u]>-\infty$ (under the usual tightness assumption on $\R^{4}$).
\end{proposition}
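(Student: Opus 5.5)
The bound \eqref{eq:conditional-bound} is a one-line algebraic consequence of the definition \eqref{eq:PES-free-energy} and the hypothesis \eqref{eq:conditional-inequality}. The plan is as follows. Fix $u\in\Adm_M(\Omega)$. By the definition of the admissible class, $\int u\log u$ is finite. The interaction $\int uK_\tau u$ is also finite, since it is bounded above by \eqref{eq:conditional-inequality} and below by $0$ by \eqref{eq:Ktau-quadratic-Fourier}--\eqref{eq:Ktau-quadratic-spectral}. Hence $\Free_{\mathrm{PES}}[u]$ is a well-defined real number. Multiply \eqref{eq:conditional-inequality} by $-\chi/2<0$, which reverses the inequality, and add $\int u\log u$ to both sides. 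This gives
\[
\Free_{\mathrm{PES}}[u]=\int_\Omega u\log u-\tfrac{\chi}{2}\int_\Omega uK_\tau u\ge\Bigl(1-\tfrac{\chi}{2}a\Bigr)\int_\Omega u\log u-\tfrac{\chi}{2}C,
\]
which is \eqref{eq:conditional-bound}.

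For the infimum statement, the coefficient $1-\tfrac{\chi}{2}a$ is strictly positive by hypothesis. It therefore suffices to bound $\int u\log u$ from below uniformly on $\Adm_M(\Omega)$.

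\emph{Bounded domain or $\T^4$.} Use $u\log u\ge -1/e$ pointwise. This gives $\int_\Omega u\log u\ge -|\Omega|/e$, and then
\[
\Free_{\mathrm{PES}}[u]\ge -\bigl(1-\tfrac{\chi}{2}a\bigr)|\Omega|/e-\tfrac{\chi}{2}C,
\]
uniformly in $u$.

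\emph{$\R^4$.} Here the step requires the tightness convention adopted after \eqref{eq:admissible-class}: a moment bound $\int|x|^2u\,dx\le \Lambda$. I would use the standard Carleman-type estimate from two-dimensional Keller--Segel theory. Split $u\log u=u\log u\,\mathbf 1_{\{u\ge e^{-|x|^2}\}}+u\log u\,\mathbf 1_{\{u<e^{-|x|^2}\}}$. On the first set, $u\log u\ge -|x|^2u$. On the second set, $u<e^{-|x|^2}\le 1$, and $|u\log u|\le C_\delta u^{1-\delta}\le C_\delta e^{-(1-\delta)|x|^2}$, which is integrable on $\R^4$. Together these give
\[
\int u\log u\ge -\int|x|^2u-C\ge -\Lambda-C.
\]

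The main point to watch is this $\R^4$ step. It depends on the moment normalisation, which must be understood as part of the definition of $\Adm_M$ on $\R^4$, so the infimum is uniform only over the moment-bounded class. The caveat should be stated explicitly; no other obstacle is expected, since the inequality itself is assumed.
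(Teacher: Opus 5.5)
Your proposal is correct and follows the same route as the paper: insert the assumed inequality into the definition of $\Free_{\mathrm{PES}}$, use $1-\tfrac{\chi}{2}a>0$, and bound $\int u\log u$ from below at fixed mass. The differences are only in routine details. You use the pointwise bound $u\log u\ge -1/e$ and a Carleman-type splitting where the paper cites Jensen and a moment-Jensen argument. Your requirement that the second-moment bound on $\R^4$ be uniform, $\int|x|^2u\,dx\le\Lambda$, correctly makes precise the paper's ``usual tightness assumption''.
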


\begin{proof}
Insert \eqref{eq:conditional-inequality} into the definition of $\Free_{\mathrm{PES}}$:
\[
        \Free_{\mathrm{PES}}[u]
        =\int u\log u-\tfrac{\chi}{2}\int u K_\tau u
        \ge\Bigl(1-\tfrac{\chi}{2}a(M,\tau,\Omega)\Bigr)\!\int u\log u-\tfrac{\chi}{2}C(M,\tau,\Omega).
\]
The coefficient is strictly positive by hypothesis, and $\int u\log u$ is bounded below on $\Adm_M$ at fixed mass (using Jensen's inequality on bounded domains, or moment-Jensen on $\R^{4}$).
\end{proof}

\begin{proposition}[Conditional MEP continuation]\label{prop:conditional-MEP}
Let $(u,c)$ be a classical MEP solution on $[0,T_{\max})$. Suppose there exists a mixed-norm space $X$ such that, for any $T<T_{\max}$,
\begin{equation}\label{eq:conditional-MEP-control}
        \|u\|_{X([0,T])}<\infty\quad\Longrightarrow\quad\|\nabla c\|_{L^{1}(0,T;W^{1,\infty})}<\infty,
\end{equation}
through the Volterra representation \eqref{eq:MEP-Duhamel}. If, in addition, $\|u\|_{L^{\infty}(0,T_{\max};L^{\infty})}<\infty$, then $T_{\max}=\infty$.
\end{proposition}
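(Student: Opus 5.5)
The plan is to run a contradiction argument built on the continuation criterion of Proposition~\ref{prop:MEP-continuation}. Suppose $T_{\max}<\infty$. Then the blow-up alternative \eqref{eq:blowup-alternative} says that
\[
\limsup_{T\uparrow T_{\max}}\Bigl[\|u\|_{L^{\infty}(0,T;L^{\infty})}+\|\nabla c\|_{L^{1}(0,T;W^{1,\infty})}\Bigr]=\infty .
\]
The first summand stays bounded by hypothesis, since $\|u\|_{L^{\infty}(0,T;L^{\infty})}\le\|u\|_{L^{\infty}(0,T_{\max};L^{\infty})}<\infty$ for every $T<T_{\max}$. Therefore it suffices to show that $\|\nabla c\|_{L^{1}(0,T;W^{1,\infty})}$ stays bounded as $T\uparrow T_{\max}$.

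To obtain this, I first check that $\|u\|_{X([0,T])}<\infty$ for each $T<T_{\max}$, so that \eqref{eq:conditional-MEP-control} can be invoked. A classical solution on $[0,T]$ with $T<T_{\max}$ is bounded and, by mass conservation, lies in $L^{1}$. It therefore belongs to every mixed Lebesgue space $L^{p}(0,T;L^{q})$ on the finite interval. I will read ``mixed-norm space'' as a space of this type, continuously embedded from $L^{\infty}(0,T;L^{1}\cap L^{\infty})$ on bounded time intervals. With this reading, \eqref{eq:conditional-MEP-control} gives $\|\nabla c\|_{L^{1}(0,T;W^{1,\infty})}<\infty$ for each fixed $T<T_{\max}$.

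The main obstacle is uniformity as $T\uparrow T_{\max}$. Finiteness on each $[0,T]$ alone does not rule out the limsup diverging. I would try to close this gap in one of two ways.

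\textbf{(a)} Interpret \eqref{eq:conditional-MEP-control} quantitatively, as a bound $\|\nabla c\|_{L^{1}(0,T;W^{1,\infty})}\le\Phi(\|u\|_{X([0,T])})$ with $\Phi$ locally bounded, which is how the Volterra estimates of Theorem~\ref{thm:MEP-drift} actually produce such an implication. Then $\|u\|_{X([0,T])}\le C(T_{\max})\,\|u\|_{L^{\infty}(0,T_{\max};L^{1}\cap L^{\infty})}$ uniformly in $T$, and $\Phi$ of this quantity bounds the drift uniformly.

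\textbf{(b)} Argue directly through \eqref{eq:MEP-Duhamel}. Since $\|u\|_{L^{\infty}(0,T_{\max};L^{\infty})}<\infty$, the source $A^{-1}u$ of the linear equation $\partial_t c+Ac=A^{-1}u$ is bounded in $L^{\infty}(0,T_{\max};W^{2,s})$ for all $s<\infty$. Maximal-regularity and semigroup smoothing bounds then control $\nabla c$ in $W^{1,\infty}$ uniformly on $(0,T_{\max})$, provided $c_0\in W^{1,\infty}$ is handled by the semigroup factor near $t=0$. This route makes the $X$-hypothesis essentially a bookkeeping device. I would present (a) as the main line and (b) as the justification that the required uniformity actually holds.

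With the uniform bound in hand, the bracket in \eqref{eq:blowup-alternative} stays bounded as $T\uparrow T_{\max}$, contradicting $T_{\max}<\infty$. Hence $T_{\max}=\infty$.
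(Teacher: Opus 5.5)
Your proposal is correct, and its skeleton is the paper's. The paper's entire proof is the single line ``combine the hypothesis with Proposition~\ref{prop:MEP-continuation}'': assume $T_{\max}<\infty$, note that the $u$-part of the bracket in \eqref{eq:blowup-alternative} stays bounded, and take the drift part from \eqref{eq:conditional-MEP-control}.

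You go further by catching what that one-liner glosses over. Read literally, \eqref{eq:conditional-MEP-control} only gives finiteness of $\|\nabla c\|_{L^{1}(0,T;W^{1,\infty})}$ on each $[0,T]$ with $T<T_{\max}$. For a classical solution with $c_0\in W^{1,\infty}$ and bounded $u$ this is already true by the same estimates as in your (b), so it cannot keep the limsup finite. The paper silently uses the hypothesis up to $T=T_{\max}$ (or uniformly in $T$), and it never checks $\|u\|_{X([0,T])}<\infty$ either. Your option (a) makes that implicit reading explicit, and your interpretation of $X$ as a mixed Lebesgue space controlled by $L^{\infty}(0,T;L^{1}\cap L^{\infty})$ supplies the missing membership check.

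Your option (b) is a genuinely different and stronger route. From \eqref{eq:MEP-Duhamel}, two estimates suffice on $\R^{N}$ or $\T^{N}$, with the standard analogues on Neumann domains:
\[
\|\nabla^{2}e^{-tA}c_0\|_{L^{\infty}}\le Ct^{-1/2}\|\nabla c_0\|_{L^{\infty}},
\qquad
\|\nabla^{2}e^{-\theta A}A^{-1}f\|_{L^{\infty}}\le C\theta^{-N/(2s)}\|f\|_{L^{s}},\quad s>N/2.
\]
These give $\nabla c\in L^{1}(0,T_{\max};W^{1,\infty})$ directly from $u\in L^{\infty}(0,T_{\max};L^{s})$, which follows from the $L^{\infty}$ bound plus mass conservation. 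So the $X$-hypothesis is redundant, and the proposition reduces to the standard $L^{\infty}$ extensibility criterion. Two small precisions for (b). First, claim control of $\nabla c$ in $W^{1,\infty}$ in $L^{1}(0,T_{\max})$, not ``uniformly on $(0,T_{\max})$'', because of the $t^{-1/2}$ initial layer. Second, since $\nabla^{2}A^{-1}$ is not bounded on $L^{\infty}$, the $L^{s}\to L^{\infty}$ semigroup smoothing step with $s>N/2$ is exactly what makes the Volterra kernel integrable.
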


\begin{proof}
Combine the hypothesis with Proposition~\ref{prop:MEP-continuation}.
\end{proof}

\subsection{Conclusion}
\label{subsec:conclusion}

The mathematical analysis reveals that the physical assumption of instantaneous equilibration versus transient kinetic memory fundamentally alters the macroscopic pattern-formation thresholds. Specifically, we identify two distinct criticality classes for indirect signal-generation chemotaxis cascades:

\begin{enumerate}[label=(\roman*),leftmargin=2em]
\item \textbf{PES} is a \emph{static fourth-order elliptic} chemotaxis system. The eliminated interaction $c=K_\tau u$ is positive, self-adjoint, and fourth-order smoothing; the drift has principal order $-3$; the critical scaling is $u_\lambda(x,t)=\lambda^{4}u(\lambda x,\lambda^{2}t)$ with $q_c=N/4$; mass is critical in $N=4$; and the four-dimensional kernel has logarithmic local coefficient $1/(8\pi^{2}\tau)$. The variational candidate critical mass is
\[
        M_{\!*}=\frac{64\pi^{2}\tau}{\chi}.
\]
Promoting $M_{\!*}$ to a sharp threshold reduces to the sharp $K_\tau$-adapted Adams/log-HLS inequality of Problem~\ref{prob:sharp-logHLS}.

\item \textbf{MEP} is a \emph{Volterra-memory} chemotaxis system. Its signal law is $c(t)=e^{-tA}c_0+\int_0^t e^{-(t-s)A}A^{-1}u(s)\,ds$. Its near-diagonal drift has Keller--Segel order $-1$; its frozen-time average has fourth-order order $-3$. The two competing features prevent a static fourth-order elliptic reduction. The corresponding threshold problem is a genuinely mixed parabolic--Volterra problem (Problem~\ref{prob:MEP-criticality}).
\end{enumerate}

The fundamental conclusion is that \textit{PES and MEP belong to different criticality classes.} PES belongs to the fourth-order elliptic Adams/log-HLS class, while MEP belongs to a Volterra-memory class whose critical behaviour cannot be inferred from the PES scaling, nor from a static elliptic argument of any kind. Recognising this operator-theoretic separation is an essential first step for correctly formulating critical mass phenomena in realistic multiscale models, where the intermediate kinetics of microenvironments often retain critical temporal memory.


\begin{thebibliography}{10}

\bibitem{keller1970initiation}
Evelyn~F Keller and Lee~A Segel.
\newblock Initiation of slime mold aggregation viewed as an instability.
\newblock {\em Journal of theoretical biology}, 26(3):399--415, 1970.

\bibitem{keller1971model}
Evelyn~F Keller and Lee~A Segel.
\newblock Model for chemotaxis.
\newblock {\em Journal of theoretical biology}, 30(2):225--234, 1971.

\bibitem{beckner1993sharp}
William Beckner.
\newblock Sharp {S}obolev inequalities on the sphere and the {M}oser--{T}rudinger inequality.
\newblock {\em Annals of Mathematics}, 138(1):213--242, 1993.

\bibitem{carlen1992competing}
ERIC Carlen and Michael Loss.
\newblock Competing symmetries, the logarithmic {H}{L}{S} inequality and {O}nofri's inequality on sn.
\newblock {\em Geometric \& Functional Analysis GAFA}, 2(1):90--104, 1992.

\bibitem{blanchet2006two}
Adrien Blanchet, Jean Dolbeault, and Beno{\^\i}t Perthame.
\newblock Two-dimensional {K}eller--{S}egel model: {O}ptimal critical mass and qualitative properties of the solutions.
\newblock {\em Electronic Journal of Differential Equations (EJDE)[electronic only]}, 2006:Paper--No, 2006.

\bibitem{jager1992explosions}
Willi J{\"a}ger and Stephan Luckhaus.
\newblock On explosions of solutions to a system of partial differential equations modelling chemotaxis.
\newblock {\em Transactions of the american mathematical society}, 329(2):819--824, 1992.

\bibitem{nagai1995blow}
Toshitaka Nagai.
\newblock Blow-up of radially symmetric solutions to a chemotaxis system.
\newblock {\em Adv. Math. Sci. Appl.}, 5:581, 1995.

\bibitem{gajewski1998global}
Herbert Gajewski, Klaus Zacharias, and Konrad Gr{\"o}ger.
\newblock Global behaviour of a reaction-diffusion system modelling chemotaxis.
\newblock {\em Mathematische Nachrichten}, 195(1):77--114, 1998.

\bibitem{liu2025bidirectional}
Zonghao Liu, Louis~Shuo Wang, Jiguang Yu, Jilin Zhang, Erica Martel, and Shijia Li.
\newblock Bidirectional endothelial feedback drives {T}uring-vascular patterning and drug-resistance niches: a hybrid {P}{D}{E}-agent-based study.
\newblock {\em Bioengineering}, 12(10):1097, 2025.

\bibitem{horstmann20031970}
Dirk Horstmann.
\newblock From 1970 until present: the {K}eller--{S}egel model in chemotaxis and its consequences.
\newblock 2003.

\bibitem{fujie2017application}
Kentarou Fujie and Takasi Senba.
\newblock Application of an {A}dams type inequality to a two-chemical substances chemotaxis system.
\newblock {\em Journal of Differential Equations}, 263(1):88--148, 2017.

\bibitem{tao2017critical}
Youshan Tao and Michael Winkler.
\newblock Critical mass for infinite-time aggregation in a chemotaxis model with indirect signal production.
\newblock {\em Journal of the European Mathematical Society (EMS Publishing)}, 19(12), 2017.

\bibitem{wang2026algebraic}
Louis~Shuo Wang and Jiguang Yu.
\newblock Algebraic--spectral thresholds and discrete--continuous stability transfer in {L}eslie--{G}ower systems.
\newblock {\em Electronic Research Archive}, 34(1):251--290, 2026.

\bibitem{hillen2009user}
Thomas Hillen and Kevin~J Painter.
\newblock A user’s guide to {P}{D}{E} models for chemotaxis.
\newblock {\em Journal of mathematical biology}, 58(1):183--217, 2009.

\bibitem{bellomo2015toward}
Nicola Bellomo, Abdelghani Bellouquid, Youshan Tao, and Michael Winkler.
\newblock Toward a mathematical theory of {K}eller--{S}egel models of pattern formation in biological tissues.
\newblock {\em Mathematical Models and Methods in Applied Sciences}, 25(09):1663--1763, 2015.

\bibitem{wang2025analysis}
Louis~Shuo Wang, Jiguang Yu, Shijia Li, and Zonghao Liu.
\newblock Analysis and mean-field limit of a hybrid {P}{D}{E}--{A}{B}{M} modeling angiogenesis-regulated resistance evolution.
\newblock {\em Mathematics}, 13(17):2898, 2025.

\bibitem{winkler2016two}
Michael Winkler.
\newblock The two-dimensional {K}eller--{S}egel system with singular sensitivity and signal absorption: {G}lobal large-data solutions and their relaxation properties.
\newblock {\em Mathematical Models and Methods in Applied Sciences}, 26(05):987--1024, 2016.

\bibitem{blanchet2008infinite}
Adrien Blanchet, Jos{\'e}~A Carrillo, and Nader Masmoudi.
\newblock Infinite time aggregation for the critical {P}atlak--{K}eller--{S}egel model in $\mathbb r^2$.
\newblock {\em Communications on Pure and Applied Mathematics: A Journal Issued by the Courant Institute of Mathematical Sciences}, 61(10):1449--1481, 2008.

\bibitem{liang2025global}
Ye~Liang, Louis~Shuo Wang, Jiguang Yu, and Zonghao Liu.
\newblock Global well-posedness and stability of nonlocal damage-structured lineage model with feedback and dedifferentiation.
\newblock {\em Mathematics}, 13(22):3583, 2025.

\bibitem{raphael2014stability}
Pierre Rapha{\"e}l and R{\'e}mi Schweyer.
\newblock On the stability of critical chemotactic aggregation.
\newblock {\em Mathematische Annalen}, 359(1):267--377, 2014.

\bibitem{nagai1997application}
Toshitaka Nagai, Takasi Senba, and Kiyoshi Yoshida.
\newblock Application of the {T}rudinger-{M}oser inequality to a parabolic system of chemotaxis.
\newblock {\em Funkc. Ekvacioj}, 40:411--433, 1997.

\bibitem{xiang2025critical}
Zhaoyin Xiang and Lan Yang.
\newblock Critical mass for the {C}auchy problem of a chemotaxis model with indirect signal production mechanism.
\newblock {\em Journal of Evolution Equations}, 25(1):26, 2025.

\bibitem{yang2026critical}
Lan Yang.
\newblock Critical mass for a no-flux-{D}irichlet chemotaxis model with indirect signal production mechanism.
\newblock {\em Advances in Nonlinear Analysis}, 15(1):20250147, 2026.

\bibitem{laurenccot2024singular}
Philippe Lauren{\c{c}}ot and Christian Stinner.
\newblock Singular limit of a chemotaxis model with indirect signal production and phenotype switching.
\newblock {\em Nonlinearity}, 37(10):105007, 2024.

\bibitem{wang2025analysis1}
Louis~Shuo Wang and Jiguang Yu.
\newblock Analysis framework for stochastic predator--prey model with demographic noise.
\newblock {\em Results in Applied Mathematics}, 27:100621, 2025.

\bibitem{mao2025critical}
Xuan Mao, Meng Liu, and Yuxiang Li.
\newblock Critical mass for finite-time chemotactic collapse in the critical dimension via comparison.
\newblock {\em Nonlinearity}, 38(9):095026, 2025.

\bibitem{mao2025finite}
Xuan Mao, Meng Liu, and Yuxiang Li.
\newblock Finite-time blowup in a fully parabolic chemotaxis model involving indirect signal production.
\newblock {\em arXiv preprint arXiv:2503.12439}, 2025.

\bibitem{mao2025finite1}
Xuan Mao and Yuxiang Li.
\newblock Finite-time blowup in a parabolic-parabolic-elliptic chemotaxis model involving indirect signal production.
\newblock {\em Applied Mathematics \& Optimization}, 92(1):10, 2025.

\bibitem{wang2026damage}
Louis~Shuo Wang, Jiguang Yu, and Zonghao Liu.
\newblock A damage-structured {P}{D}{E} model of stem cell hierarchies: The dual role of dedifferentiation in tissue homeostasis and aging.
\newblock {\em Plos one}, 21(2):e0335163, 2026.

\bibitem{soga2026sharp}
Yuri Soga.
\newblock A sharp criterion and complete classification of global-in-time solutions and finite time blow-up of solutions to a chemotaxis system in supercritical dimensions.
\newblock {\em arXiv preprint arXiv:2601.15990}, 2026.

\bibitem{winkler2022family}
Michael Winkler.
\newblock A family of mass-critical {K}eller--{S}egel systems.
\newblock {\em Proceedings of the London Mathematical Society}, 124(2):133--181, 2022.

\bibitem{yu2026pattern}
Jiguang Yu, Louis~Shuo Wang, Zonghao Liu, and Jingfeng Liu.
\newblock Pattern suppression and recovery under one-way versus two-way chemotactic coupling in hybrid partial differential equation--ordinary differential equation models.
\newblock {\em Transport Phenomena}, 2026.

\bibitem{chen2024negligibility}
Yuanlin Chen and Tian Xiang.
\newblock Negligibility of haptotaxis on global dynamics in a chemotaxis-haptotaxis system with indirect signal production.
\newblock {\em Journal of Differential Equations}, 409:1--48, 2024.

\bibitem{pazy2012semigroups}
Amnon Pazy.
\newblock {\em Semigroups of linear operators and applications to partial differential equations}.
\newblock Springer Science \& Business Media, 2012.

\bibitem{henry2006geometric}
Daniel Henry.
\newblock {\em Geometric theory of semilinear parabolic equations}.
\newblock Springer, 2006.

\bibitem{lunardi2012analytic}
Alessandra Lunardi.
\newblock {\em Analytic semigroups and optimal regularity in parabolic problems}.
\newblock Springer Science \& Business Media, 2012.

\bibitem{wang2026breakdown}
Louis~Shuo Wang, Jiguang Yu, Ye~Liang, and Jilin Zhang.
\newblock The breakdown of linear quasi-cycles: {D}emographic noise and absorbing boundaries in finite predator--prey systems.
\newblock {\em Electronic Research Archive}, 34(6):4248--4289, 2026.

\bibitem{amann1995linear}
Herbert Amann et~al.
\newblock {\em Linear and quasilinear parabolic problems}, volume~1.
\newblock Springer, 1995.

\bibitem{pruss2016moving}
Jan Pr{\"u}ss and Gieri Simonett.
\newblock {\em Moving interfaces and quasilinear parabolic evolution equations}, volume 105.
\newblock Springer, 2016.

\bibitem{shi2021spatial}
Qingyan Shi, Junping Shi, and Hao Wang.
\newblock Spatial movement with distributed memory.
\newblock {\em Journal of Mathematical Biology}, 82(4), 2021.

\bibitem{bui2026parabolic}
Le~Trong~Thanh Bui, Thi Kim~Loan Huynh, Bao~Quoc Tang, and Bao-Ngoc Tran.
\newblock Parabolic-elliptic and indirect-direct simplifications in chemotaxis systems driven by indirect signalling: Ltt bui et al.
\newblock {\em Calculus of Variations and Partial Differential Equations}, 65(3):76, 2026.

\bibitem{wang2025multi}
Zixin Wang, Danqing Wang, and Jiguang Yu.
\newblock Multi-strategy hybrid improved intelligent algorithm for solving {U}{A}{V}-{M}{T}{S}{P}.
\newblock {\em Information Technology and Control}, 54(2):413--438, 2025.

\bibitem{liu2025global}
Qian Liu and Dan Li.
\newblock Global existence for the {C}auchy problem of the parabolic--parabolic--{O}{D}{E} chemotaxis model with indirect signal production on the plane.
\newblock {\em Mathematics}, 13(16):2624, 2025.

\bibitem{nagai2000chemotactic}
Toshitaka Nagai, Takasi Senba, and Takashi Suzuki.
\newblock Chemotactic collapse in a parabolic system of mathematical biology.
\newblock {\em Hiroshima Mathematical Journal}, 30(3):463--497, 2000.

\bibitem{gao2022rolling}
Yuansheng Gao, Lei Li, and Jiguang Yu.
\newblock Rolling prediction model of closing price based on {E}{E}{M}{D} data noise reduction and {H}{G}{S}-{D}{E}{L}{M}.
\newblock In {\em 2022 International Conference on Data Analytics, Computing and Artificial Intelligence (ICDACAI)}, pages 255--260. IEEE, 2022.

\bibitem{mao2024dirac}
Xuan Mao and Yuxiang Li.
\newblock Dirac-type aggregation with full mass in a chemotaxis model.
\newblock {\em Discrete and Continuous Dynamical Systems-S}, 17(4):1513--1528, 2024.

\bibitem{adams1988sharp}
David~R Adams.
\newblock A sharp inequality of {J}. {M}oser for higher order derivatives.
\newblock {\em Annals of Mathematics}, 128(2):385--398, 1988.

\bibitem{adams2003sobolev}
Robert~A Adams and John~JF Fournier.
\newblock {\em Sobolev spaces}, volume 140.
\newblock Elsevier, 2003.

\bibitem{hebey2000nonlinear}
Emmanuel Hebey.
\newblock {\em Nonlinear analysis on manifolds: {S}obolev spaces and inequalities}, volume~5.
\newblock American Mathematical Soc., 2000.

\bibitem{yu2026beyond}
J.~Yu and L.~S. Wang.
\newblock Beyond diagonal noise: {A} better predator-prey modeling framework with cross-covariance.
\newblock {\em PLoS One}, 21(5):e0350127, 2026.

\bibitem{lin1998classification}
C-S Lin.
\newblock A classification of solutions of a conformally invariant fourth order equation in $\mathbb{R}^n$.
\newblock {\em Commentarii Mathematici Helvetici}, 73(2):206--231, 1998.

\bibitem{martinazzi2009classification}
Luca Martinazzi.
\newblock Classification of solutions to the higher order {L}iouville’s equation on $\mathbb{R}^{2m}$.
\newblock {\em Mathematische Zeitschrift}, 263(2):307--329, 2009.

\bibitem{lieb2001analysis}
Elliott~H Lieb and Michael Loss.
\newblock {\em Analysis}, volume~14.
\newblock American Mathematical Soc., 2001.

\bibitem{ruf2013sharp}
Bernhard Ruf and Federica Sani.
\newblock Sharp {A}dams-type inequalities in $\mathbb R^n$.
\newblock {\em Transactions of the American Mathematical Society}, 365(2):645--670, 2013.

\bibitem{laurenccot2017finite}
Philippe Lauren{\c{c}}ot and Noriko Mizoguchi.
\newblock Finite time blowup for the parabolic--parabolic {K}eller--{S}egel system with critical diffusion.
\newblock In {\em Annales de l'Institut Henri Poincar{\'e} C, Analyse non lin{\'e}aire}, volume~34, pages 197--220. Elsevier, 2017.

\bibitem{senba2002weak}
Takasi Senba and Takashi Suzuki.
\newblock Weak solutions to a parabolic--elliptic system of chemotaxis.
\newblock {\em Journal of Functional Analysis}, 191(1):17--51, 2002.

\bibitem{yu2026rigorous}
Jiguang Yu, Louis~Shuo Wang, and Ye~Liang.
\newblock Rigorous analysis of a nonlocal transport–renewal system for physiologically structured populations.
\newblock {\em Mathematical Methods in the Applied Sciences}, 2026.

\bibitem{fujie2019blowup}
Kentarou Fujie and Takasi Senba.
\newblock Blowup of solutions to a two-chemical substances chemotaxis system in the critical dimension.
\newblock {\em Journal of Differential Equations}, 266(2-3):942--976, 2019.

\bibitem{hosono2025global}
Tatsuya Hosono and Philippe Lauren{\c{c}}ot.
\newblock Global existence and boundedness of solutions to a fully parabolic chemotaxis system with indirect signal production in $\mathbb{R}^4$.
\newblock {\em Journal of Differential Equations}, 416:2085--2133, 2025.

\bibitem{triebel1992theory}
H~Triebel.
\newblock Theory of function spaces {I}{I}.
\newblock {\em Birkh{\"a}user Verlag}, 1992.

\bibitem{stein1970singular}
Elias~M Stein.
\newblock {\em Singular integrals and differentiability properties of functions}.
\newblock Number~30. Princeton university press, 1970.

\bibitem{taylor2010pseudodifferential}
Michael~E Taylor.
\newblock Pseudodifferential operators.
\newblock In {\em Partial Differential Equations {I}{I}: Qualitative Studies of Linear Equations}, pages 1--90. Springer, 2010.

\bibitem{jordan1998variational}
Richard Jordan, David Kinderlehrer, and Felix Otto.
\newblock The variational formulation of the {F}okker--{P}lanck equation.
\newblock {\em SIAM journal on mathematical analysis}, 29(1):1--17, 1998.

\bibitem{ambrosio2005gradient}
Luigi Ambrosio, Nicola Gigli, and Giuseppe Savar{\'e}.
\newblock {\em Gradient flows: in metric spaces and in the space of probability measures}.
\newblock Springer, 2005.

\bibitem{ladyzhenskaia1968linear}
Olga~Aleksandrovna Ladyzhenskaia, Vsevolod~Alekseevich Solonnikov, and Nina~N Uraltseva.
\newblock {\em Linear and quasi-linear equations of parabolic type}, volume~23.
\newblock American Mathematical Soc., 1968.

\bibitem{lam2012sharp}
Nguyen Lam and Guozhen Lu.
\newblock Sharp {A}dams type inequalities in {S}obolev spaces $W^{m,\frac{n}{m}}(\mathbb{R}^n)$ for arbitrary integer $m$.
\newblock {\em Journal of Differential Equations}, 253(4):1143--1171, 2012.

\end{thebibliography}
\end{document}